\renewcommand\section{\@startsection{section}{1}%
  \z@{.7\linespacing\@plus\linespacing}{.5\linespacing}%
  {\large\rmfamily\scshape\centering}}
\begin{document}
\title[ML Methods for UQ of Elliptic PDEs with Random Anisotropic Diffusion]{Multilevel Methods for Uncertainty Quantification of Elliptic PDEs with Random Anisotropic Diffusion}

\author{Helmut Harbrecht \and Marc Schmidlin}
\address{
Helmut Harbrecht and Marc Schmidlin,
Departement Mathematik und Informatik,
Universit\"at Basel,
Spiegelgasse 1, 4051 Basel, Switzerland
}
\thanks{\textbf{Funding:} The work of the authors was supported
  by the Swiss National Science Foundation (SNSF)
  through the project
  ``Multilevel Methods and Uncertainty Quantification in Cardiac Electrophysiology''
  (grant \mbox{205321\_169599}).}
\subjclass[2010]{Primary 35R60, 65N30, 60H35}
\email{\{helmut.harbrecht, marc.schmidlin\}@unibas.ch}

%
%
\begin{abstract}
  We consider elliptic diffusion problems with a random anisotropic diffusion 
  coefficient, where, in a notable direction given by a random vector field,
  the diffusion strength differs from the diffusion strength perpendicular
  to this notable direction. The Karhunen\hyp{}Lo\`eve expansion then 
  yields a parametrisation of the random vector field and, therefore, also 
  of the solution of the elliptic diffusion problem. We show that, given 
  regularity of the elliptic diffusion problem, the decay of the 
  Karhunen\hyp{}Lo\`eve expansion entirely determines the regularity 
  of the solution's dependence on the random parameter, also when 
  considering this higher spatial regularity. This result then implies that
  multilevel collocation and multilevel quadrature methods may be used 
  to lessen the computation complexity when approximating quantities of 
  interest, like the solution's mean or its second moment, while still yielding 
  the expected rates of convergence. Numerical examples in three spatial 
  dimensions are provided to validate the presented theory.
\end{abstract}

\keywords{Uncertainty quantification, anisotropic diffusion, regularity estimates, multilevel methods}

\maketitle
%
\section{Introduction}
The numerical approximation of quantities of interest, such 
as expectation, variance, or more general output functionals, 
of the solution of a diffusion problem with a scalar random diffusion 
coefficient with multilevel collocation or multilevel quadrature methods 
has been considered previously, see e.g.\ \cite{BSZ11,CGST11,GKNSSS15,%
GHP15,HPS13,HPS16b,KSS15,TJWG} and the references 
therein; in this \emph{isotropic} case, the mixed smoothness required for 
the use of such multilevel methods has been provided in \cite{CDS10}
for uniformly elliptic diffusion coefficients and in \cite{HS14} for 
log\hyp{}normally distributed diffusion coefficients.
However, in simulations of certain diffusion phenomena in science 
and engineering, the diffusion that needs to be modeled may not 
necessarily be isotropic. One specific application we have in mind 
here stems from cardiac electrophysiology, where the electrical 
activation of the human heart is considered. It is known that the 
fibrous structure of the heart plays a major role when considering 
the electrical and mechanical properties of the heart. And while 
the fibres have a complex and generally well\hyp{}organised structure,
see e.g.\ \cite{DEHPR18,RSR18,RSG07,Setal12},
the exact fibre orientation may vary between individuals and also over time in an individual,
for example due to the presence of scaring of the heart.

More generally, we wish to be able to model diffusion in a fibrous media,
where fibre direction and diffusion strength in fibre direction are subject to uncertainty.
For this setting, the following random anisotropic diffusion coefficient
was defined in \cite{HPS17b}:
\begin{equation*}
  \Abfm[\omega] \isdef a \Ibfm + \groupp[\Big]{\norm[\big]{\Vbfm[\omega]}_2 - a} \frac{\Vbfm[\omega] \Vbfm^\trans[\omega]}{\Vbfm^\trans[\omega] \Vbfm[\omega]},
\end{equation*}
where $a$ is a given value and $\Vbfm$ is a random vector\hyp{}valued field,
over a given spatial domain $D$ and a given probability space $(\Omega, \Fcal, \Pbbb)$.
The fibre direction is hence given by $\Vbfm / \norm{\Vbfm}_2$ with
the diffusion strength in the fibre direction being $\norm{\Vbfm}_2$
and the diffusion strength perpendicular to the fibre direction is defined by $a$.
While we only consider this model hereafter,
the techniques we use may also be applied straightforwardly to
other models of random anisotropic diffusion coefficients,
such as, for example, the following model in three spatial dimensions:
\begin{align*}
  \Abfm[\omega] \isdef&
  \begin{bmatrix} | & | & | \\ \fbfm & \tbfm & \sbfm \\ | & | & | \end{bmatrix}
  \begin{bmatrix}
    \cos \alpha_3[\omega] & -\sin \alpha_3[\omega] & 0 \\
    \sin \alpha_3[\omega] & \cos \alpha_3[\omega] & 0 \\
    0 & 0 & 1
  \end{bmatrix}
  \begin{bmatrix}
    1 & 0 & 0 \\
    0 & \cos \alpha_2[\omega] & \sin \alpha_2[\omega] \\
    0 & -\sin \alpha_2[\omega] & \cos \alpha_2[\omega]
  \end{bmatrix} \\
  & \begin{bmatrix}
    \cos \alpha_1[\omega] & 0 & \sin \alpha_1[\omega] \\
    0 & 1 & 0 \\
    -\sin \alpha_1[\omega] & 0 & \cos \alpha_1[\omega]
  \end{bmatrix}
  \begin{bmatrix}
    a_\fbfm[\omega] & 0 & 0 \\
    0 & a_\tbfm[\omega] & 0 \\
    0 & 0 & a_\sbfm[\omega]
  \end{bmatrix}
  \begin{bmatrix}
    \cos \alpha_1[\omega] & 0 & \sin \alpha_1[\omega] \\
    0 & 1 & 0 \\
    -\sin \alpha_1[\omega] & 0 & \cos \alpha_1[\omega]
  \end{bmatrix}^\trans \\
  & \begin{bmatrix}
    1 & 0 & 0 \\
    0 & \cos \alpha_2[\omega] & \sin \alpha_2[\omega] \\
    0 & -\sin \alpha_2[\omega] & \cos \alpha_2[\omega]
  \end{bmatrix}^\trans
  \begin{bmatrix}
    \cos \alpha_3[\omega] & -\sin \alpha_3[\omega] & 0 \\
    \sin \alpha_3[\omega] & \cos \alpha_3[\omega] & 0 \\
    0 & 0 & 1
  \end{bmatrix}^\trans
  \begin{bmatrix} | & | & | \\ \fbfm & \tbfm & \sbfm \\ | & | & | \end{bmatrix}^\trans
\end{align*}
Here, $\fbfm$, $\tbfm$ and $\sbfm$ are vector fields describing the fibre,
the transverse sheet and the sheet normal directions in the heart,
which at each point in $D$ yields an orthonormal basis.
These vector fields could, for example, be derived from measurements
or be generated by an algorithm such as
the Laplace\hyp{}Dirichlet Rule\hyp{}Based algorithm described in \cite{BBPT12}.
Note that, in this model, the diffusion strengths are random fields
$a_\fbfm$, $a_\tbfm$ and $a_\sbfm$,
and the fibre and sheet directions are locally angularily perturbed by random fields
$\alpha_1$, $\alpha_2$ and $\alpha_3$.

We shall consider the second order diffusion problem with this uncertain
diffusion coefficient $\Abfm$ given by
\begin{equation*}
  \text{for almost every $\omega \in \Omega$: } \left\{
    \begin{alignedat}{2}
      - \Div_\xbfm \groupp[\big]{\Abfm[\omega] \Grad_\xbfm u[\omega]} &= f
      &\quad &\text{in }D , \\
      u[\omega] &= 0
      &\quad &\text{on }\partial D ,
    \end{alignedat}
  \right.
\end{equation*}
with the known function $f$ as a source. The result of this article is 
then as follows. Having spatial $H^s$-regularity of the underlying diffusion 
problem, given by sufficient smoothness of the right hand $f$ side and the domain $D$,
then the random solution $u$ admits analytic regularity 
with respect to the stochastic parameter also in the $H^s(D)$-norm provided 
that the random vector-valued field offers enough spatial regularity. This 
\emph{mixed} regularity is the essential ingredient in order to apply 
multilevel collocation or multilevel quadrature methods without deteriorating 
the rate of convergence, see \cite{HPS13} for instance.

The rest of the article is organised as follows: In Section 2,
we provide basic definitions and notation for the functional analytic 
framework to be able to state and then also reformulate the model 
problem, by using the Karhunen\hyp{}Lo\`eve expansion of the 
diffusion describing random vector\hyp{}valued field $\Vbfm$,
into its stochastically parametric and spatially weak formulation.
Section 3 then deals with the regularity of the solution
of the stochastically parametric and spatially weak formulation of the model problem
with respect to the stochastic parameter and some given higher spatial regularity
in the model problem.
We then use the fact
that the higher spatial regularity can be kept,
when considering the regularity of the solution with respect to the stochastic parameter,
to arrive at convergence rates when considering multilevel quadrature,
such as multilevel quasi\hyp{}Monte Carlo quadrature,
to approximate the solution's mean and second moment.
Numerical examples are provided in Section 4 as validation;
specifically we use multilevel quasi\hyp{}Monte Carlo quadrature
to approximate the solution's mean and second moment
in a setting with three spatial dimensions.
Lastly, we give our conclusions in Section 5.
%
\section{Problem formulation}
\subsection{Notation and precursory remarks}
For a given Banach space $\Xcal$ and a complete measure 
space $\Mcal$ with measure $\mu$ the space $L_\mu^p(\Mcal;\Xcal)$ 
for $1 \leq p \leq \infty$ denotes the Bochner space, see \cite{HillePhillips},
which contains all equivalence classes of strongly measurable functions
$v \colon \Mcal \to \Xcal$ with finite norm
\begin{equation*}
  \norm{v}_{L_\mu^p(\Mcal; \Xcal)} \isdef
  \begin{cases}
    \groups[\bigg]{\displaystyle\int_\Mcal \norm[\big]{v(x)}_{\Xcal}^p \dif\,\mu(x)}^{1/p} , & p < \infty , \\
    \displaystyle\esssup_{x \in \Mcal} \norm[\big]{v(x)}_{\Xcal} , & p = \infty .
  \end{cases}
\end{equation*}
A function $v \colon \Mcal \to \Xcal$ is strongly measurable if there exists 
a sequence of countably\hyp{}valued measurable functions $v_n \colon \Mcal \to \Xcal$,
such that for almost every $m \in \Mcal$ we have $\lim_{n \to \infty} v_n(m) = v(m)$.
Note that, for finite measures $\mu$, we also have the usual inclusion
$L_\mu^p(\Mcal; \Xcal) \supset L_\mu^q(\Mcal; \Xcal)$ for $1 \leq p < q \leq \infty$.

Let $\Xcal$, $\Xcal_1, \ldots, \Xcal_r$ and $\Ycal$ be Banach spaces,
then we denote the Banach space of bounded, linear maps from $\Xcal$ to $\Ycal$
as $\Bcal(\Xcal; \Ycal)$;
furthermore,
we recursively define
\begin{equation*}
  \Bcal(\Xcal_1, \ldots, \Xcal_r; \Ycal) \isdef \Bcal\groupp[\big]{\Xcal_1; \Bcal(\Xcal_2, \ldots, \Xcal_r; \Ycal)}
\end{equation*}
and the special case
\begin{equation*}
  \Bcal^0(\Xcal; \Ycal) \isdef \Ycal
  \quad\text{and}\quad
  \Bcal^{r+1}(\Xcal; \Ycal) \isdef \Bcal\groupp[\big]{\Xcal; \Bcal^r(\Xcal; \Ycal)} .
\end{equation*}
For $\Tbfm \in \Bcal(\Xcal_1, \ldots, \Xcal_r; \Ycal)$ and $\vbfm_j \in \Xcal_j$
we use the notation
$\Tbfm \vbfm_1 \cdots \vbfm_r \isdef \Tbfm(\vbfm_1, \ldots, \vbfm_r) \in \Ycal$.

Subsequently, we will always equip $\Rbbb^d$ with the norm $\norm{\cdot}_2$
induced by the canonical inner product $\langle\cdot,\cdot\rangle$
and $\Rbbb^{d \times d}$ with the induced norm $\norm{\cdot}_2$.
Then, for $\vbfm, \wbfm \in \Rbbb^d$, the Cauchy\hyp{}Schwartz inequality gives us
\begin{equation*}
  \norms{\vbfm^\trans \wbfm} = \norms[\big]{\groupa{\vbfm, \wbfm}} \leq \norm{\vbfm}_2 \norm{\wbfm}_2,
\end{equation*}
where equality holds only if $\vbfm = \wbfm$,
and we also have, by straightforward computation, that
\begin{equation*}
  \norm{\vbfm \wbfm^\trans}_2 = \norm{\vbfm}_2 \norm{\wbfm}_2 .
\end{equation*}

We also note that to avoid the use of generic but unspecified constants in
certain formulas we use $C \lesssim D$ to mean that $C$ can be bounded by
a multiple of $D$, independently of parameters which $C$ and $D$ may depend on.
Obviously, $C \gtrsim D$ is defined as $D \lesssim  C$ and we write
$C \eqsim  D$ if $C \lesssim D$ and $C \gtrsim  D$.
Lastly, note that for the natural numbers $\Nbbb$ denotes them including $0$ and
$\Nbbb^*$ excluding $0$.
\subsection{Model problem}
Let $(\Omega, \Fcal, \Pbbb)$ be a separable, complete probability space.
Then, we consider the following second order diffusion problem
with a random anisotropic diffusion coefficient
\begin{equation}
  \label{eq:sodp}
  \text{for almost every $\omega \in \Omega$: } \left\{
    \begin{alignedat}{2}
      - \Div_\xbfm \groupp[\big]{\Abfm[\omega] \Grad_\xbfm u[\omega]} &= f
      &\quad &\text{in }D , \\
      u[\omega] &= 0 &\quad &\text{on }\partial D ,
    \end{alignedat}
  \right.
\end{equation}
where $D \subset \Rbbb^d$ is a Lipschitz domain with $d \geq 1$
and the function $f \in H^{-1}(D)$ describes the known source.
The random anisotropic diffusion coefficient is given as the random matrix field
$\Abfm \in L_\Pbbb^\infty\groupp[\big]{\Omega; L^\infty(D; \Rbbb^{d \times d})}$,
which satisfies the uniform ellipticity condition
\begin{equation}
  \label{eq:Aellipticity}
  \underline{a} \leq \essinf_{\xbfm \in D} \lambda_{\min} \groupp[\big]{\Abfm[\omega](\xbfm)} \leq
  \esssup_{\xbfm \in D} \lambda_{\max} \groupp[\big]{\Abfm[\omega](\xbfm)} \leq \overline{a}
  \quad\text{$\Pbbb$-almost surely}
\end{equation}
for some constants $0 < \underline{a} \leq \overline{a} < \infty$
and is almost surely symmetric almost everywhere.
Without loss of generality, we assume $\underline{a} \leq 1 \leq \overline{a}$.

We specifically consider diffusion coefficients that are of form
\begin{equation}
  \label{eq:AV}
  \Abfm[\omega](\xbfm) \isdef a \Ibfm + \groupp[\Big]{\norm[\big]{\Vbfm[\omega](\xbfm)}_2 - a} \frac{\Vbfm[\omega](\xbfm) \Vbfm^\trans[\omega](\xbfm)}{\Vbfm^\trans[\omega](\xbfm) \Vbfm[\omega](\xbfm)} ,
\end{equation}
where $a \in \Rbbb$ is a given positive number and
$\Vbfm \in L_\Pbbb^\infty\groupp[\big]{\Omega; L^\infty(D; \Rbbb^d)}$
is a random vector\hyp{}valued field.
We note that such a field $\Abfm$ accounts for a medium that has 
homogeneous diffusion strength $a$ perpendicular to $\Vbfm$
and has diffusion strength $\norm[\big]{\Vbfm[\omega](\xbfm)}_2$
in the direction of $\Vbfm$.
The randomness of the specific direction and length of $\Vbfm$
therefore quantifies uncertainty of this notable direction and its diffusion strength.
To guarantee the uniform ellipticity condition \eqref{eq:Aellipticity},
we require that
\begin{equation}
  \label{eq:Vellipticity}
  \underline{a} \leq \essinf_{\xbfm \in D} \norm[\big]{\Vbfm[\omega](\xbfm)}
  \leq \esssup_{\xbfm \in D} \norm[\big]{\Vbfm[\omega](\xbfm)} \leq \overline{a}
  \quad\text{$\Pbbb$-almost surely}
\end{equation}
as well as $\underline{a} \leq a \leq \overline{a}$.

It is assumed that the spatial variable $\xbfm$ and the stochastic 
parameter $\omega$ of the random field have been separated by 
the Karhunen\hyp{}Lo\`eve expansion of $\Vbfm$, yielding a 
parametrised expansion
\begin{equation}
  \label{eq:KLp}
  \Vbfm[\ybfm](\xbfm)
  = \Mean[\Vbfm](\xbfm) + \sum_{k=1}^{\infty} \sigma_k \psibfm_k(\xbfm) y_k ,
\end{equation}
where $\ybfm = (y_k)_{k \in \Nbbb^*} \in \square \isdef \groups{{-1}, 1}^{\Nbbb^*}$ is
a sequence of uncorrelated random variables, see e.g.\ \cite{HPS17b}.
In the following, we will denote the pushforward of the measure $\Pbbb$
onto $\square$ as $\Pbbb_\ybfm$.
Then, we also view $\Abfm[\ybfm](\xbfm)$ and $u[\ybfm](\xbfm)$ 
as being parametrised by $\ybfm$ and restate \eqref{eq:sodp} as
\begin{equation}
  \label{eq:psodp}
  \text{for almost every $\ybfm \in \square$: } \left\{
    \begin{alignedat}{2}
      - \Div_\xbfm \groupp[\big]{\Abfm[\ybfm] \Grad_\xbfm u[\ybfm]} &= f
      &\quad &\text{in }D , \\
      u[\ybfm] &= 0 &\quad &\text{on }\partial D.
    \end{alignedat}
  \right.
\end{equation}

We now impose some common assumptions,
which make the Karhunen\hyp{}Lo\`eve expansion computationally feasible.
\begin{assumption}
  The random variables $(y_k)_{k \in \Nbbb^*}$ are independent and identically distributed.
  Moreover, they are uniformly distributed on $\groups[\big]{{-1}, 1}$.
\end{assumption}

Lastly, we note that the spatially weak form of \eqref{eq:psodp} is given by
\begin{equation}
  \label{eq:swpsodp}
  \left\{
  \begin{aligned}
    & \text{Find $u \in L_{\Pbbb_\ybfm}^\infty\groupp[\big]{\square; H_0^1(D)}$ such that} \\
    & \quad \groupp[\big]{\Abfm[\ybfm] \Grad_\xbfm u[\ybfm], \Grad_\xbfm v}_{L^2(D; \Rbbb^d)} = \groupp[\big]{f, v}_{L^2(D; \Rbbb^d)} \\
    & \quad \text{for almost every $\ybfm \in \square$ and all $v \in H_0^1(D)$.}
  \end{aligned}
  \right.
\end{equation}
This also entails the well known stability estimate.
\begin{lemma}\label{lemma:ubound}
  There is a unique solution $u \in L_{\Pbbb_\ybfm}^\infty\groupp[\big]{\square; H_0^1(D)}$
  of \eqref{eq:swpsodp}, which fulfils
  \begin{equation*}
    \norm[\big]{u[\ybfm]}_{L_{\Pbbb_\ybfm}^\infty(\square; H^1(D))}
    \leq \frac{1}{\underline{a} c_V^2} \groupp[\Big]{\norm{f}_{H^{-1}(D)}} ,
  \end{equation*}
  where $c_V$ is the Poincar\'e-Friedrichs constant of $H_0^1(D)$.
\end{lemma}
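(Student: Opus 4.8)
The plan is to establish existence, uniqueness, and the stability bound via the standard Lax-Milgram framework, applied pointwise in the stochastic parameter $\ybfm$. First I would verify that the bilinear form $B_\ybfm(u,v) \isdef \groupp[\big]{\Abfm[\ybfm] \Grad_\xbfm u, \Grad_\xbfm v}_{L^2(D; \Rbbb^d)}$ is, for almost every fixed $\ybfm \in \square$, continuous and coercive on $H_0^1(D)$. Continuity follows from the Cauchy-Schwarz inequality together with the upper bound $\esssup_{\xbfm} \lambda_{\max}(\Abfm[\ybfm](\xbfm)) \leq \overline{a}$ furnished by the ellipticity condition \eqref{eq:Aellipticity}, which gives $\norms{B_\ybfm(u,v)} \leq \overline{a} \norm{\Grad_\xbfm u}_{L^2(D; \Rbbb^d)} \norm{\Grad_\xbfm v}_{L^2(D; \Rbbb^d)}$. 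Coercivity follows from the lower bound $\essinf_{\xbfm} \lambda_{\min}(\Abfm[\ybfm](\xbfm)) \geq \underline{a}$, which yields $B_\ybfm(u,u) \geq \underline{a} \norm{\Grad_\xbfm u}_{L^2(D; \Rbbb^d)}^2$; both constants are independent of $\ybfm$.

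Next I would invoke the Lax-Milgram lemma to obtain, for almost every $\ybfm$, a unique $u[\ybfm] \in H_0^1(D)$ solving the weak formulation, since $f \in H^{-1}(D)$ defines a bounded linear functional on $H_0^1(D)$. Testing the weak equation with $v = u[\ybfm]$ and combining coercivity with the bound $\norms{\groupp[\big]{f, u[\ybfm]}} \leq \norm{f}_{H^{-1}(D)} \norm{u[\ybfm]}_{H_0^1(D)}$, I would derive the pointwise estimate. Here I would use the Poincar\'e-Friedrichs inequality $\norm{u[\ybfm]}_{H^1(D)} \leq c_V \norm{\Grad_\xbfm u[\ybfm]}_{L^2(D; \Rbbb^d)}$ to convert between the full $H^1$-norm and the gradient seminorm; applying it once to pass from coercivity in the seminorm to a lower bound in the full norm, and tracking the constants, produces the factor $1/(\underline{a} c_V^2)$ appearing in the claimed bound.

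The final step is to pass from the pointwise-in-$\ybfm$ estimate to the asserted bound in the Bochner norm $L_{\Pbbb_\ybfm}^\infty\groupp[\big]{\square; H^1(D)}$. Since the right-hand side $\frac{1}{\underline{a} c_V^2} \norm{f}_{H^{-1}(D)}$ is a deterministic constant independent of $\ybfm$, taking the essential supremum over $\ybfm \in \square$ is immediate once the pointwise bound holds for almost every $\ybfm$. I would also note that strong measurability of $\ybfm \mapsto u[\ybfm]$, needed for membership in the Bochner space, follows from the measurable dependence of $\Abfm[\ybfm]$ on $\ybfm$ through the parametrisation \eqref{eq:KLp} together with the continuous dependence of the Lax-Milgram solution on the coefficient.

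I do not expect any genuine obstacle here, as this is the classical stability argument for elliptic problems; the only point requiring mild care is the bookkeeping of the Poincar\'e-Friedrichs constant so that the exponent on $c_V$ in the final bound comes out correctly, and the verification that the ellipticity constants $\underline{a}, \overline{a}$ can be chosen uniformly in $\ybfm$, which is guaranteed by \eqref{eq:Aellipticity} holding $\Pbbb$-almost surely and hence $\Pbbb_\ybfm$-almost surely.
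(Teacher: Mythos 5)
Your proposal is correct and is precisely the standard Lax--Milgram/energy argument that the paper itself invokes without proof (the lemma is introduced only as a ``well known stability estimate''), including the pointwise-in-$\ybfm$ coercivity and continuity with constants uniform in $\ybfm$ thanks to \eqref{eq:Aellipticity}, followed by taking the essential supremum. One bookkeeping caveat, which you yourself anticipated: with the Poincar\'e--Friedrichs inequality written as you state it, $\|u\|_{H^1(D)} \le c_V \|\Grad_\xbfm u\|_{L^2(D;\Rbbb^d)}$, the chain $\underline{a}\, c_V^{-2}\|u\|_{H^1(D)}^2 \le \underline{a}\|\Grad_\xbfm u\|_{L^2(D;\Rbbb^d)}^2 \le \|f\|_{H^{-1}(D)}\|u\|_{H^1(D)}$ produces the constant $c_V^2/\underline{a}$, so to reproduce the lemma's $1/(\underline{a}\, c_V^2)$ you must take $c_V$ in the reciprocal convention $c_V\|u\|_{H^1(D)} \le \|\Grad_\xbfm u\|_{L^2(D;\Rbbb^d)}$.
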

%
\section{Parametric regularity and multilevel quadrature}
We now derive regularity estimates for the solution $u$ of \eqref{eq:swpsodp}
and apply multilevel quadrature for approximating the mean of $u$.
The regularity estimates are based on
the following assumption on the decay of the expansion of $\Vbfm$.
\begin{assumption}\label{assumption:Vdecay}
  We assume that the $\psibfm_k$ are elements of $W^{\kappa, \infty}(D; \Rbbb^d)$
  for a $\kappa \in \Nbbb$
  and that the sequence $\gammabfm_\kappa = \groupp{\gamma_{\kappa, k}}_{k \in \Nbbb}$,
  given by
  \begin{equation*}
    \gamma_{\kappa, k} \isdef \norm[\big]{\sigma_k \psibfm_k}_{W^{\kappa, \infty}(D; \Rbbb^d)} ,
  \end{equation*}
  is at least in $\ell^1(\Nbbb)$,
  where we have defined $\psibfm_0 \isdef \Mean[\Vbfm]$ and
  $\sigma_0 \isdef 1$.
  Furthermore, we define
  \begin{equation*}
    c_{\gammabfm_\kappa} = \max\groupb[\big]{\norm{\gammabfm_\kappa}_{\ell^1(\Nbbb)}, 2} .
  \end{equation*}

  We furthermore assume that the vector field $\Vbfm$ is given by a finite rank
  Karhunen-Lo\`eve expansion, i.e.
  \begin{equation*}
    \Vbfm[\ybfm](\xbfm)
    = \Mean[\Vbfm](\xbfm) + \sum_{k=1}^{M} \sigma_k \psibfm_k(\xbfm) y_k ,
  \end{equation*}
  where $\square \isdef \groups{{-1}, 1}^{M}$.
  We note that the regularity estimates however will not depend on the rank $M$
  and therefore,
  if necessary,
  a finite rank can be attained by appropriate truncation.
\end{assumption}
Furthermore,
for the regularity estimates we also require an elliptic regularity result.
\begin{assumption}\label{assumption:er}
  Let $\Rcal_\kappa$ be a Banach space with norm $\norm{\cdot}_{\Rcal_\kappa}$ such that,
  for all
  \begin{equation*}
    \Abfm \in W^{\kappa, \infty}(D; \Rbbb^{d \times d}) \cap \Rcal_\kappa
  \end{equation*}
  that fulfil \eqref{eq:Aellipticity},
  we have that
  the problem of solving
  \begin{equation*}
    \groupp[\Big]{\Abfm \Grad_{\xbfm} u, \Grad_x v}_{L^2(D; \Rbbb^d)} = \groupp{h, v}_{L^2(D)}
  \end{equation*}
  for any $h \in H^{\kappa-1}(D)$ has a unique solution $u \in H_0^1(D)$,
  which also lies in $H^{\kappa+1}(D)$,
  with
  \begin{equation*}
    \norm{u}_{\kappa+1, 2, D}
    \leq C_{\kappa, er}\groupp[\big]{D, \norm{\Abfm}_{\Rcal_\kappa}} \norm{h}_{\kappa-1, 2, D} ,
  \end{equation*}
  where $C_{\kappa, er}$ only depends on $D$ and continuously on $\norm{\Abfm}_{\Rcal_\kappa}$.

  We assume from here on that
  $\Abfm$ also lies in $L_{\Pbbb_\ybfm}^\infty(\square; \Rcal_\kappa)$.
\end{assumption}
Note, that for $\kappa = 0$, this reduces to the stability estimate,
for which the parametric regularity may be found in \cite{HPS17b}.
Therefore, we will hereafter only consider the case where $\kappa \geq 1$.
Such an elliptic regularity estimate for example is known for $\kappa = 1$,
when the domain is convex and bounded and
$\Rcal_\kappa = C^{0, 1}(\overline{D}; \Rbbb^{d \times d})$,
see \cite[Theorems 3.2.1.2 and 3.1.3.1]{Grisvard}.
The elliptic regularity estimate is also known to hold for $\kappa \geq 1$ and $d = 2$,
when the domain's boundary is smooth and
$\Rcal_\kappa = W^{\kappa, \infty}(D; \Rbbb^{d \times d})$,
see \cite{BLN17}.

The rest of this section is now split into three subsections.
The first subsection is dedicated to introducing
some useful norms as well as deriving Corollary~\ref{corollary:prodrules22b}
and Theorem~\ref{theorem:comprules}.
These two results are then used in the second subsection to step\hyp{}by\hyp{}step
provide regularity estimates for the different terms
that make up the diffusion coefficient,
based on Assumption~\ref{assumption:Vdecay} on the decay of the expansion of $\Vbfm$,
which then yields regularity estimates for the diffusion coefficient.
By using this decay as well as Assumption~\ref{assumption:er}, we derive
the regularity estimates of the solution $u$ in Theorem~\ref{theorem:euybounds}.
In the third subsection, we then briefly discuss what kind of convergence rates
and computational complexity this regularity of $u$ yields,
for the example of approximating $\Mean[u]$ using multilevel quadrature methods.
\subsection{Precursory remarks}
For the Sobolev--Bochner spaces $W^{\eta, p}(D; \Xcal)$
with $\eta \in \Nbbb$ and $1 \leq p \leq \infty$,
we introduce the norms given by
\begin{equation*}
  \norm{v}_{\eta, p, D; \Xcal}
  \isdef \norm{v}_{W^{\eta, p}(D; \Xcal)}
  \isdef
  \sum_{\norms{\alphabfm} \leq \eta} \frac{1}{\alphabfm!}
  \norm[\big]{\pdif_\xbfm^\alphabfm v}_{p, D; \Xcal} ,
\end{equation*}
for $v \in W^{\eta, p}(D; \Xcal)$
where $\Xcal$ is a Banach space with norm $\norm{\cdot}_{\Xcal}$
and where we make use of the shorthand
\begin{equation*}
  \norm{\cdot}_{p, D; \Xcal}
  \isdef \norm{\cdot}_{L^p(D; \Xcal)} .
\end{equation*}
We may omit the specification of the Banach space $\Xcal$,
for example when $\Xcal$ is the space $\Rbbb$, $\Rbbb^d$ or $\Rbbb^{d \times d}$.

For these norms,
we have the following lemma giving a bound after applying
a $\Div_\xbfm$, $\Dif_\xbfm$ or $\Grad_\xbfm$:
\begin{lemma}\label{lemma:divjacgradrule}
  Let $\eta \in \Nbbb^*$, $1 \leq p \leq \infty$.
  For $\vbfm \in W^{\eta, p}(D; \Rbbb^{d})$
  we have that $\Div_\xbfm \vbfm \in W^{\eta-1, p}(D; \Rbbb)$ with
  \begin{equation*}
    \norm{\Div_\xbfm \vbfm}_{\eta-1, p, D}
    \leq \eta d \norm{\vbfm}_{\eta, p, D}
  \end{equation*}
  and $\Dif_\xbfm \vbfm \in W^{\eta-1, p}(D; \Rbbb^{d \times d})$ with
  \begin{equation*}
    \norm{\Dif_\xbfm \vbfm}_{\eta-1, p, D}
    \leq \eta d \norm{\vbfm}_{\eta, p, D} .
  \end{equation*}
  For $v \in W^{\eta, p}(D)$
  we have that $\Grad_\xbfm v \in W^{\eta-1, p}(D; \Rbbb^{d})$ with
  \begin{equation*}
    \norm{\Grad_\xbfm v}_{\eta-1, p, D}
    \leq \eta d \norm{v}_{\eta, p, D} .
  \end{equation*}
\end{lemma}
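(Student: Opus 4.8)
The plan is to bound the norm of each differential operator applied to $\vbfm$ (or $v$) by first understanding how the operator acts at the level of individual partial derivatives, and then summing the resulting estimates against the weighted norm defining $\norm{\cdot}_{\eta, p, D}$. The key observation is that $\Div_\xbfm$, $\Dif_\xbfm$ and $\Grad_\xbfm$ all produce components that are themselves first-order partial derivatives of the original function; hence applying $\pdif_\xbfm^\alphabfm$ with $\norms{\alphabfm} \leq \eta - 1$ to a component of, say, $\Div_\xbfm \vbfm$ yields a term of the form $\pdif_\xbfm^\betabfm v_i$ with $\norms{\betabfm} = \norms{\alphabfm} + 1 \leq \eta$, which is exactly one of the terms controlled by $\norm{\vbfm}_{\eta, p, D}$.

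I would start with $\Grad_\xbfm v$, since it is the cleanest: its $i$-th component is $\pdif_{x_i} v$. For a multi-index $\alphabfm$ with $\norms{\alphabfm} \leq \eta - 1$ one has $\pdif_\xbfm^\alphabfm (\pdif_{x_i} v) = \pdif_\xbfm^{\alphabfm + \ebfm_i} v$, and I would track the weight $1/\alphabfm!$ appearing in the target norm against the weight $1/(\alphabfm + \ebfm_i)!$ that the same term carries in $\norm{v}_{\eta, p, D}$. Since $(\alphabfm + \ebfm_i)! = (\alpha_i + 1)\,\alphabfm!$, the ratio of weights is $\alpha_i + 1 \leq \norms{\alphabfm} + 1 \leq \eta$, which explains the factor $\eta$. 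Summing over the $d$ components $i = 1, \ldots, d$ then accounts for the extra factor $d$, and each derived term $\pdif_\xbfm^{\alphabfm + \ebfm_i} v$ is dominated (after reweighting by at most $\eta$) by the corresponding summand of $\norm{v}_{\eta, p, D}$, giving the claimed bound. The cases of $\Dif_\xbfm \vbfm$ and $\Div_\xbfm \vbfm$ follow the same combinatorial bookkeeping: the Jacobian $\Dif_\xbfm \vbfm$ has entries $\pdif_{x_j} v_i$, while $\Div_\xbfm \vbfm = \sum_i \pdif_{x_i} v_i$ collapses the diagonal, so in each case every resulting entry is again a single first-order derivative of a component of $\vbfm$.

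The main obstacle, if any, is purely bookkeeping: one must verify that no term $\pdif_\xbfm^\betabfm v_i$ with $\norms{\betabfm} = \eta$ is over-counted when several $(\alphabfm, i)$ pairs map to the same $\betabfm$, and that the crude bound $\alpha_i + 1 \leq \eta$ is uniform enough to pull the factor $\eta$ out of the sum cleanly. I would handle this by grouping terms according to the derived multi-index $\betabfm$ and observing that the total weight accumulated on each $\betabfm$ is controlled by $\eta d$ times its weight $1/\betabfm!$ in the norm $\norm{\vbfm}_{\eta, p, D}$; the norm on the matrix- or vector-valued outputs reduces entrywise to the scalar estimates via the definition of $\norm{\cdot}_{p, D; \Xcal}$ and the chosen matrix norm. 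Since all three estimates share the identical structure, I would prove the gradient bound in detail and then remark that the divergence and Jacobian bounds are obtained analogously, which keeps the argument short and avoids redundant computation.
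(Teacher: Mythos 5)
Your proposal is correct and follows essentially the same route as the paper's proof: reduce each operator to first-order partial derivatives of components, compare the weight $1/\alphabfm!$ with $1/(\alphabfm+\ebfm_i)!$ to extract the factor $\eta$ via $\alpha_i+1\leq\eta$, and sum over the $d$ components to obtain the factor $d$. Your explicit treatment of the factorial-weight ratio and the injectivity of $\alphabfm\mapsto\alphabfm+\ebfm_i$ is in fact slightly more careful than the paper's terse reindexing step, but the argument is the same.
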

\begin{proof}
  We calculate
  \begin{align*}
    \norm{\Div_\xbfm \vbfm}_{\eta-1, p, D}
    &= \norm[\bigg]{\sum_{i=1}^{d} \pdif_{x_i} v_i}_{\eta-1, p, D} 
    = \sum_{\norms{\alphabfm} \leq \eta-1} \frac{1}{\alphabfm!}
    \norm[\bigg]{\sum_{i=1}^{d} \pdif_\xbfm^{\alphabfm}\pdif_{x_i} v_i}_{p, D} \\
    &\leq \sum_{\norms{\alphabfm} \leq \eta-1} \frac{1}{\alphabfm!}
    \sum_{i=1}^{d} \norm{\pdif_\xbfm^{\alphabfm}\pdif_{x_i} v_i}_{p, D} 
    \leq \eta \sum_{\norms{\alphabfm} \leq \eta} \frac{1}{\alphabfm!}
    \sum_{i=1}^{d} \norm{\pdif_\xbfm^{\alphabfm} v_i}_{p, D} \\
    &\leq \eta d \sum_{\norms{\alphabfm} \leq \eta} \frac{1}{\alphabfm!}
    \norm{\pdif_\xbfm^{\alphabfm} \vbfm}_{p, D} 
    \leq \eta d \norm{\vbfm}_{\eta, p, D} .
  \end{align*}
  
  We also may compute
  \begin{align*}
    \norm{\Dif_\xbfm \vbfm}_{\eta-1, p, D}
    &= \sum_{\norms{\alphabfm} \leq \eta-1} \frac{1}{\alphabfm!}
    \norm[\big]{\pdif_\xbfm^\alphabfm \Dif_\xbfm \vbfm}_{p, D} 
    = \sum_{\norms{\alphabfm} \leq \eta-1} \frac{1}{\alphabfm!}
    \norm*{\begin{bmatrix}
        \pdif_\xbfm^\alphabfm \pdif_{x_1} \vbfm &
        \hdots &
        \pdif_\xbfm^\alphabfm \pdif_{x_d} \vbfm
    \end{bmatrix}}_{p, D} \\
    &\leq \sum_{\norms{\alphabfm} \leq \eta-1} \frac{1}{\alphabfm!}
    \sum_{i=1}^{d} \norm{\pdif_\xbfm^\alphabfm \pdif_{x_i} \vbfm}_{p, D} 
    \leq \eta \sum_{\norms{\alphabfm} \leq \eta} \frac{1}{\alphabfm!}
    \sum_{i=1}^{d} \norm{\pdif_\xbfm^\alphabfm \vbfm}_{p, D} \\
    &\leq \eta d \sum_{\norms{\alphabfm} \leq \eta} \frac{1}{\alphabfm!}
    \norm{\pdif_\xbfm^\alphabfm \vbfm}_{p, D} 
    \leq \eta d \norm{\vbfm}_{\eta, p, D} 
  \end{align*}
  as well as
  \begin{align*}
    \norm{\Grad_\xbfm u}_{\eta-1, p, D}
    \norm{\Dif_\xbfm u}_{\eta-1, p, D}
    &= \sum_{\norms{\alphabfm} \leq \eta-1} \frac{1}{\alphabfm!}
    \norm[\big]{\pdif_\xbfm^\alphabfm \Grad_\xbfm u}_{p, D} 
    = \sum_{\norms{\alphabfm} \leq \eta-1} \frac{1}{\alphabfm!}
    \norm*{\begin{bmatrix}
        \pdif_\xbfm^\alphabfm \pdif_{x_1} u \\
        \vdots \\
        \pdif_\xbfm^\alphabfm \pdif_{x_d} u
    \end{bmatrix}}_{p, D} \\
    &\leq \sum_{\norms{\alphabfm} \leq \eta-1} \frac{1}{\alphabfm!}
    \sum_{i=1}^{d} \norm{\pdif_\xbfm^\alphabfm \pdif_{x_i} u}_{p, D} 
    \leq \eta \sum_{\norms{\alphabfm} \leq \eta} \frac{1}{\alphabfm!}
    \sum_{i=1}^{d} \norm{\pdif_\xbfm^\alphabfm u}_{p, D} \\
    &\leq \eta d \sum_{\norms{\alphabfm} \leq \eta} \frac{1}{\alphabfm!}
    \norm{\pdif_\xbfm^\alphabfm u}_{p, D} 
    \leq \eta d \norm{u}_{\eta, p, D} . \qedhere
  \end{align*}
\end{proof}

The Leibniz rule also yields the following kind of submultiplicativity
for these norms:
\begin{lemma}\label{lemma:prodrule}
  Let $\eta \in \Nbbb$, $1 \leq p, p' \leq \infty$,
  $\Xcal$ and $ \Ycal$ be Banach spaces
  and
  \begin{equation*}
    \Mbfm \in W^{\eta, p}\groupp[\big]{D; \Bcal(\Xcal; \Ycal)} , \quad
    \vbfm \in W^{\eta, p'}(D; \Xcal)
  \end{equation*}
  with $q = (p^{-1} + p'^{-1})^{-1} \geq 1$.
  Then, we have
  \begin{equation*}
    \norm{\Mbfm \vbfm}_{\eta, q, D; \Ycal}
    \leq \norm{\Mbfm}_{\eta, p, D; \Bcal(\Xcal; \Ycal)}
    \norm{\vbfm}_{\eta, p', D; \Xcal} .
  \end{equation*}
\end{lemma}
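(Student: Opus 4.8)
The plan is to reduce the statement to the Leibniz rule combined with H\"older's inequality, exploiting the fact that the factorial weights $1/\alphabfm!$ in the definition of $\norm{\cdot}_{\eta, p, D; \Xcal}$ are chosen precisely so that the binomial coefficients arising from differentiating a product cancel. First I would fix a multi-index $\alphabfm$ with $\norms{\alphabfm} \leq \eta$ and apply the Leibniz rule,
\begin{equation*}
  \pdif_\xbfm^\alphabfm(\Mbfm \vbfm)
  = \sum_{\betabfm \leq \alphabfm} \binom{\alphabfm}{\betabfm} \groupp[\big]{\pdif_\xbfm^\betabfm \Mbfm} \groupp[\big]{\pdif_\xbfm^{\alphabfm - \betabfm} \vbfm} .
\end{equation*}
Dividing by $\alphabfm!$ and using $\binom{\alphabfm}{\betabfm} = \frac{\alphabfm!}{\betabfm!\,(\alphabfm - \betabfm)!}$, the factor $\alphabfm!$ cancels and every summand carries the weight $\frac{1}{\betabfm!\,(\alphabfm - \betabfm)!}$, which factorises across the two arguments.

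Next I would pass to norms termwise. Pointwise in $D$, since $\Mbfm(\xbfm) \in \Bcal(\Xcal; \Ycal)$, the definition of the operator norm supplies the bound $\norm{\Mbfm \wbfm}_\Ycal \leq \norm{\Mbfm}_{\Bcal(\Xcal; \Ycal)} \norm{\wbfm}_\Xcal$, applied here with $\wbfm = \pdif_\xbfm^{\alphabfm - \betabfm} \vbfm$. Integrating the resulting pointwise product over $D$ and invoking H\"older's inequality with the conjugate pair $(p, p')$ --- which is exactly where the hypothesis $q^{-1} = p^{-1} + p'^{-1}$ enters --- yields
\begin{equation*}
  \norm[\big]{\groupp[\big]{\pdif_\xbfm^\betabfm \Mbfm}\groupp[\big]{\pdif_\xbfm^{\alphabfm - \betabfm} \vbfm}}_{q, D; \Ycal}
  \leq \norm[\big]{\pdif_\xbfm^\betabfm \Mbfm}_{p, D; \Bcal(\Xcal; \Ycal)} \, \norm[\big]{\pdif_\xbfm^{\alphabfm - \betabfm} \vbfm}_{p', D; \Xcal} .
\end{equation*}

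Finally I would sum over all $\alphabfm$ with $\norms{\alphabfm} \leq \eta$ and reindex the resulting double sum by $\betabfm$ and $\gammabfm \isdef \alphabfm - \betabfm$, so that it runs over all pairs with $\norms{\betabfm} + \norms{\gammabfm} \leq \eta$. Since all terms are nonnegative, I would then drop the coupling and enlarge the index set to the decoupled constraints $\norms{\betabfm} \leq \eta$ and $\norms{\gammabfm} \leq \eta$, at which point the sum splits into a product and equals exactly $\norm{\Mbfm}_{\eta, p, D; \Bcal(\Xcal; \Ycal)} \norm{\vbfm}_{\eta, p', D; \Xcal}$, which is the claim. I expect no real difficulty here: the only steps requiring care are the reindexing bookkeeping and the observation that replacing $\norms{\betabfm} + \norms{\gammabfm} \leq \eta$ by the two separate constraints can only enlarge a sum of nonnegative terms. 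The genuinely load-bearing point is the factorial normalisation of the norm itself, without which the binomial coefficients would spoil the clean factorisation.
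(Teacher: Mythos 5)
Your proposal is correct and follows essentially the same route as the paper's proof: Leibniz rule, the cancellation of $\alphabfm!$ against the binomial coefficient to produce the factorised weight $\frac{1}{\betabfm!\,(\alphabfm-\betabfm)!}$, the pointwise operator-norm bound combined with H\"older's inequality for the exponents $q^{-1}=p^{-1}+p'^{-1}$, and finally the reindexing over $\norms{\betabfm}+\norms{\gammabfm}\leq\eta$ followed by enlarging to the decoupled index set so the sum factors. The only cosmetic difference is that the paper states the factorial identity up front and performs the substitution $\alphabfm\mapsto\alphabfm+\betabfm$ explicitly, whereas you expand the binomial coefficient in place; the content is identical.
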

\begin{proof}
  Let $\alphabfm, \betabfm \in \Nbbb^d$ be two multi\hyp{}indices,
  then we have
  \begin{equation*}
    \frac{1}{(\alphabfm + \betabfm)!}
    \binom{\alphabfm + \betabfm}{\betabfm}
    = \frac{1}{(\alphabfm + \betabfm)!}
    \frac{(\alphabfm + \betabfm)!}{\alphabfm! \betabfm!}
    = \frac{1}{\alphabfm! \betabfm!} .
  \end{equation*}
  
  We now can calculate
  \begin{align*}
    \norm{\Mbfm \vbfm}_{\eta, q, D; \Ycal}
    &= \sum_{\norms{\alphabfm} \leq \eta} \frac{1}{\alphabfm!}
    \norm[\big]{\pdif_\xbfm^\alphabfm [\Mbfm \vbfm]}_{q, D; \Ycal} \\
    &= \sum_{\norms{\alphabfm} \leq \eta} \frac{1}{\alphabfm!}
    \norm[\Bigg]{\sum_{\betabfm \leq \alphabfm} \binom{\alphabfm}{\betabfm}
      \pdif_\xbfm^\betabfm \Mbfm
      \pdif_\xbfm^{\alphabfm-\betabfm} \vbfm}_{q, D; \Ycal} \\
    &\leq \sum_{\norms{\alphabfm} \leq \eta} \sum_{\betabfm \leq \alphabfm}
    \frac{1}{\alphabfm!} \binom{\alphabfm}{\betabfm}
    \norm[\big]{\pdif_\xbfm^\betabfm \Mbfm
      \pdif_\xbfm^{\alphabfm-\betabfm} \vbfm}_{q, D; \Ycal} \\
    &\leq \sum_{\norms{\alphabfm} \leq \eta} \sum_{\betabfm \leq \alphabfm}
    \frac{1}{\alphabfm!} \binom{\alphabfm}{\betabfm}
    \norm[\big]{\pdif_\xbfm^\betabfm \Mbfm}_{p, D; \Bcal(\Xcal; \Ycal))}
    \norm[\big]{\pdif_\xbfm^{\alphabfm-\betabfm} \vbfm}_{p', D; \Xcal} .
  \end{align*}
  By a change of variables,
  i.e.\ replacing $\alphabfm$ with $\alphabfm+\betabfm$,
  and remarking that
  \begin{equation*}
    \groupb[\big]{(\alphabfm-\betabfm, \betabfm) : \norms{\alphabfm} \leq \eta,\,\betabfm \leq \alphabfm}
    = \groupb[\big]{(\alphabfm, \betabfm) : \norms{\alphabfm} + \norms{\betabfm} \leq \eta},
  \end{equation*}    
  we find the identity
  \begin{align*}
    & \sum_{\norms{\alphabfm} \leq \eta} \sum_{\betabfm \leq \alphabfm}
    \frac{1}{\alphabfm!} \binom{\alphabfm}{\betabfm}
    \norm[\big]{\pdif_\xbfm^\betabfm \Mbfm}_{p, D; \Bcal(\Xcal; \Ycal)}
    \norm[\big]{\pdif_\xbfm^{\alphabfm-\betabfm} \vbfm}_{p', D; \Xcal} \\
    &\quad=\sum_{\norms{\alphabfm}+\norms{\betabfm} \leq \eta}
    \frac{1}{(\alphabfm + \betabfm)!}
    \binom{\alphabfm+\betabfm}{\betabfm}
    \norm[\big]{\pdif_\xbfm^\betabfm \Mbfm}_{p, D; \Bcal(\Xcal; \Ycal)}
    \norm[\big]{\pdif_\xbfm^{\alphabfm} \vbfm}_{p', D; \Xcal} \\
    &\quad= \sum_{\norms{\alphabfm}+\norms{\betabfm} \leq \eta}
    \frac{1}{\alphabfm! \betabfm!}
    \norm[\big]{\pdif_\xbfm^\betabfm \Mbfm}_{p, D; \Bcal(\Xcal; \Ycal)}
    \norm[\big]{\pdif_\xbfm^{\alphabfm} \vbfm}_{p', D; \Xcal} .
  \end{align*}
  Consequently,
  we arrive at the desired estimate:
  \begin{align*}
    \norm{\Mbfm \vbfm}_{\eta, q, D; \Ycal}
    &\leq \sum_{\norms{\alphabfm},\norms{\betabfm} \leq \eta}
    \frac{1}{\alphabfm! \betabfm!}
    \norm[\big]{\pdif_\xbfm^\betabfm \Mbfm}_{p, D; \Bcal(\Xcal; \Ycal)}
    \norm[\big]{\pdif_\xbfm^{\alphabfm} \vbfm}_{p', D; \Xcal} \\
    &\leq \sum_{\norms{\betabfm}\leq \eta} 
    \frac{1}{\betabfm!}
    \norm[\big]{\pdif_\xbfm^\betabfm \Mbfm}_{p, D; \Bcal(\Xcal; \Ycal)}
    \sum_{\norms{\alphabfm} \leq \eta}
    \frac{1}{\alphabfm!}
    \norm[\big]{\pdif_\xbfm^{\alphabfm} \vbfm}_{p', D; \Xcal} \\
    &= \norm{\Mbfm}_{\eta, p, D; \Bcal(\Xcal; \Ycal)}
    \norm{\vbfm}_{\eta, p', D; \Xcal} .
    \qedhere
  \end{align*}
\end{proof}

By induction, we thus arrive at the following corollary:
\begin{corollary}\label{corollary:prodrule1}
  Let $\eta \in \Nbbb$, $1 \leq p, p_1, \ldots, p_r \leq \infty$,
  $\Xcal_1, \ldots, \Xcal_r$ and $ \Ycal$ be Banach spaces
  and
  \begin{equation*}
    \Mbfm \in W^{\eta, p}\groupp[\big]{D; \Bcal(\Xcal_1, \ldots, \Xcal_r; \Ycal)} , \quad
    \vbfm_j \in W^{\eta, p_j}(D; \Xcal_j)
  \end{equation*}
  with $q = (p^{-1} + p_1^{-1} + \cdots + p_r^{-1})^{-1} \geq 1$.
  Then, we have
  \begin{equation*}
    \norm{\Mbfm \vbfm_1 \cdots \vbfm_r}_{\eta, q, D; \Ycal}
    \leq \norm{\Mbfm}_{\eta, p, D; \Bcal(\Xcal_1, \ldots, \Xcal_r; \Ycal)}
    \prod_{j=1}^r \norm{\vbfm_j}_{\eta, p_j, D; \Xcal_j} .
  \end{equation*}
\end{corollary}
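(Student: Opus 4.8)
The plan is to prove the claim by induction on the number $r$ of vector arguments, exploiting the recursive definition $\Bcal(\Xcal_1, \ldots, \Xcal_r; \Ycal) = \Bcal(\Xcal_1; \Bcal(\Xcal_2, \ldots, \Xcal_r; \Ycal))$ together with Lemma~\ref{lemma:prodrule}. The case $r = 1$ is precisely Lemma~\ref{lemma:prodrule}, taking $\Xcal = \Xcal_1$ and the single argument $\vbfm_1$; this serves as the base of the induction. One may equally begin from the trivial case $r = 0$, where the empty product equals $1$ and the claim reduces to the identity $\norm{\Mbfm}_{\eta, q, D; \Ycal} = \norm{\Mbfm}_{\eta, p, D; \Ycal}$ with $q = p$.

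For the inductive step, suppose the estimate holds for $r - 1$ arguments and abbreviate the intermediate target space $\Ycal' \isdef \Bcal(\Xcal_2, \ldots, \Xcal_r; \Ycal)$. First I would regroup the action as $\Mbfm \vbfm_1 \cdots \vbfm_r = (\Mbfm \vbfm_1) \vbfm_2 \cdots \vbfm_r$ and view $\Mbfm$ as an element of $W^{\eta, p}(D; \Bcal(\Xcal_1; \Ycal'))$. Applying Lemma~\ref{lemma:prodrule} to peel off the first factor yields $\Mbfm \vbfm_1 \in W^{\eta, q_1}(D; \Ycal')$ together with
\[
\norm{\Mbfm \vbfm_1}_{\eta, q_1, D; \Ycal'} \leq \norm{\Mbfm}_{\eta, p, D; \Bcal(\Xcal_1; \Ycal')} \norm{\vbfm_1}_{\eta, p_1, D; \Xcal_1},
\]
where $q_1 \isdef (p^{-1} + p_1^{-1})^{-1}$. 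Next I would apply the induction hypothesis to $\Mbfm \vbfm_1 \in W^{\eta, q_1}(D; \Ycal')$ with the remaining arguments $\vbfm_2, \ldots, \vbfm_r$, which gives
\[
\norm{(\Mbfm \vbfm_1) \vbfm_2 \cdots \vbfm_r}_{\eta, q, D; \Ycal} \leq \norm{\Mbfm \vbfm_1}_{\eta, q_1, D; \Ycal'} \prod_{j=2}^r \norm{\vbfm_j}_{\eta, p_j, D; \Xcal_j}.
\]
Chaining these two inequalities and using that $\norm{\Mbfm}_{\eta, p, D; \Bcal(\Xcal_1; \Ycal')} = \norm{\Mbfm}_{\eta, p, D; \Bcal(\Xcal_1, \ldots, \Xcal_r; \Ycal)}$ by the recursive identification of the spaces yields the desired bound.

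The only point needing care is the bookkeeping of the integrability exponents, which is routine. One checks that $q_1^{-1} = p^{-1} + p_1^{-1} \leq q^{-1} \leq 1$, so that $q_1 \geq q \geq 1$ and Lemma~\ref{lemma:prodrule} indeed applies; moreover $q_1^{-1} + p_2^{-1} + \cdots + p_r^{-1} = p^{-1} + p_1^{-1} + \cdots + p_r^{-1} = q^{-1}$, so the exponent $q$ produced by the induction hypothesis agrees exactly with the one in the statement. I expect no genuine obstacle beyond this verification: the entire content of the corollary sits in the single-factor Leibniz estimate of Lemma~\ref{lemma:prodrule}, and the induction merely propagates it through the tower of operator spaces.
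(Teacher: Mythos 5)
Your proof is correct and follows exactly the route the paper intends: the paper derives Corollary~\ref{corollary:prodrule1} from Lemma~\ref{lemma:prodrule} ``by induction'' on $r$, using the recursive identification $\Bcal(\Xcal_1,\ldots,\Xcal_r;\Ycal)=\Bcal(\Xcal_1;\Bcal(\Xcal_2,\ldots,\Xcal_r;\Ycal))$, which is precisely your peeling-off argument. Your verification of the exponent bookkeeping ($q_1\geq q\geq 1$ and $q_1^{-1}+p_2^{-1}+\cdots+p_r^{-1}=q^{-1}$) supplies the only detail the paper leaves implicit.
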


As we will need the Fa\`a di Bruno formula, see \cite{CS96},
we just restate it here
--- slightly adapted to our notation and usage ---
for reference:
\begin{remark}\label{remark:faadibruno}
  Given $\Wbfm \colon X \to \Ycal$
  and $\vbfm \colon D \to \Xcal$,
  where $\Xcal$ and $\Ycal$ are Banach spaces,
  $X \subset \Xcal$ is open with $\Img_D \vbfm \subset X$
  and $\Wbfm, \vbfm$ are both sufficiently differentiable for the formula to make sense,
  then
  \begin{equation*}
    \pdif_\xbfm^\alphabfm (\Wbfm \circ \vbfm)
    = \alphabfm!
    \sum_{r=1}^{\norms{\alphabfm}}
    \frac{1}{r!}
    \sum_{P(\alphabfm, r)}
    (\Dif^r \Wbfm \circ \vbfm)
    \pdif_\xbfm^{\betabfm_1} \vbfm \cdots
    \pdif_\xbfm^{\betabfm_r} \vbfm
    \prod_{j=1}^r \frac{1}{\betabfm_j!} ,
  \end{equation*}
  for $\alphabfm \in \Nbbb^M$ with $\alphabfm \neq \zerobfm$,
  where $P(\alphabfm, r)$ is the set of integer partitions of a multiindex $\alphabfm$
  into $r$ non\hyp{}vanishing multiindices, given by
  \begin{equation*}
    P(\alphabfm, r) \isdef
    \groupb[\bigg]{
    \groupp[\big]{\betabfm_1, \ldots, \betabfm_r}
    \in \groupp[\big]{\Nbbb^M}^r :
    \sum_{j=1}^r \betabfm_j = \alphabfm \text{ and } \betabfm_{j} \neq \zerobfm \text{ for all } 1 \leq j \leq r
    } .
  \end{equation*}

  It also follows from \cite{CS96} that,
  for $\alphabfm \in \Nbbb^d$ and $r \in \Nbbb$,
  we have
  \begin{equation*}
    \alphabfm! \sum_{P(\alphabfm, r)} \prod_{j=1}^{r} \frac{1}{\betabfm_j!} = r! S_{\norms{\alphabfm},r} ,
  \end{equation*}
  where $S_{n,r}$ denotes the Stirling numbers of the second kind, see \cite{AbramowitzStegun}.

  Lastly, as we know that
  $\sum_{r=1}^{\norms{\alphabfm}} r! S_{\norms{\alphabfm}, r}$ equals
  the $\norms{\alphabfm}$-th ordered Bell number,
  we can bound it,
  see \cite{BTNT12},
  giving
  \begin{equation*}
    \alphabfm! \sum_{r=1}^{\norms{\alphabfm}} \sum_{P(\alphabfm, r)} \prod_{j=1}^{r} \frac{1}{\betabfm_j!}
    = \sum_{r=1}^{\norms{\alphabfm}} r! S_{\norms{\alphabfm}, r}
    \leq \frac{\norms{\alphabfm}!}{(\log 2)^{\norms{\alphabfm}}} .
  \end{equation*}
\end{remark}

Lastly, the Fa\`a di Bruno formula now yields the following lemma:
\begin{lemma}\label{lemma:comprule}
  Let $\eta \in \Nbbb$, $1 \leq p \leq \infty$,
  $\Xcal$ and $\Ycal$ be Banach spaces,
  \begin{equation*}
    \vbfm \in W^{\eta, \infty}(D; \Xcal) ,
  \end{equation*}
  $X \subset \Xcal$ be open with $\Img_D \vbfm \subset X$,
  $\Wbfm \colon X \to \Ycal$ $\eta$-times Fr\'echet differentiable
  and, for $0 \leq r \leq \eta$,
  \begin{equation*}
    \Dif^r \Wbfm \circ \vbfm \in L^{p}\groupp[\big]{D; \Bcal^{r}(\Xcal; \Ycal)} .
  \end{equation*}
  Then, we have
  \begin{equation*}
    \Wbfm \circ \vbfm \in W^{\eta, p}(D; \Ycal)
  \end{equation*}
  with
  \begin{equation*}
    \norm{\Wbfm \circ \vbfm}_{\eta, p, D; \Ycal}
    \leq \sum_{r=0}^{\eta}
    \frac{1}{r!} \norm{\Dif^r \Wbfm \circ \vbfm}_{p, D; \Bcal^{r}(\Xcal; \Ycal)}
    \norm{\vbfm}_{\eta, \infty, D; \Xcal}^r .
  \end{equation*}
\end{lemma}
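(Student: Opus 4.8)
The plan is to compute the left-hand norm directly from its definition and to reduce every spatial derivative of the composition to the Fa\`a di Bruno formula of Remark~\ref{remark:faadibruno}. Writing out
\[
  \norm{\Wbfm \circ \vbfm}_{\eta, p, D; \Ycal} = \sum_{\norms{\alphabfm} \leq \eta} \frac{1}{\alphabfm!} \norm[\big]{\pdif_\xbfm^\alphabfm (\Wbfm \circ \vbfm)}_{p, D; \Ycal},
\]
I would first isolate the term $\alphabfm = \zerobfm$, which contributes exactly $\norm{\Wbfm \circ \vbfm}_{p, D; \Ycal}$ and matches the $r = 0$ summand on the right since $\Dif^0 \Wbfm = \Wbfm$ and the empty product equals $1$. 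For every $\alphabfm \neq \zerobfm$ I would substitute the Fa\`a di Bruno expansion; crucially, the prefactor $\alphabfm!$ appearing there cancels the weight $1/\alphabfm!$ in the norm, which is precisely why these particular normalised Sobolev norms are convenient here.

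Next I would pass to the Bochner norms term by term. Since $\Dif^r \Wbfm \circ \vbfm$ is a bounded $r$-linear map applied to the $r$ arguments $\pdif_\xbfm^{\betabfm_j} \vbfm$, the operator norm estimate gives the pointwise bound
\[
  \norm[\big]{(\Dif^r \Wbfm \circ \vbfm)\, \pdif_\xbfm^{\betabfm_1} \vbfm \cdots \pdif_\xbfm^{\betabfm_r} \vbfm}_{\Ycal} \leq \norm[\big]{\Dif^r \Wbfm \circ \vbfm}_{\Bcal^r(\Xcal; \Ycal)} \prod_{j=1}^r \norm[\big]{\pdif_\xbfm^{\betabfm_j} \vbfm}_{\Xcal}
\]
at almost every point of $D$. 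Because $\vbfm \in W^{\eta, \infty}(D; \Xcal)$ all the factors $\pdif_\xbfm^{\betabfm_j} \vbfm$ are essentially bounded, so taking the $L^p(D)$ norm and pulling the bounded factors out in $L^\infty$ yields $\norm{\Dif^r \Wbfm \circ \vbfm}_{p, D; \Bcal^r(\Xcal; \Ycal)} \prod_j \norm{\pdif_\xbfm^{\betabfm_j} \vbfm}_{\infty, D; \Xcal}$; this is where the hypothesis $\Dif^r \Wbfm \circ \vbfm \in L^p$ together with the $L^\infty$-regularity of $\vbfm$ is used to guarantee the composition lies in $L^p(D; \Ycal)$.

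The heart of the argument is then the combinatorial bookkeeping. After cancelling $\alphabfm!$ I would exchange the order of summation so that $r$ runs first and then reindex the sum over $\alphabfm$ and over the partitions $P(\alphabfm, r)$ as a single sum over ordered $r$-tuples of nonvanishing multiindices $(\betabfm_1, \ldots, \betabfm_r)$ subject to $\sum_j \norms{\betabfm_j} \leq \eta$, the image of $\alphabfm = \sum_j \betabfm_j$ under this constraint. The factor $\norm{\Dif^r \Wbfm \circ \vbfm}_{p, D; \Bcal^r(\Xcal; \Ycal)}$ does not depend on the $\betabfm_j$ and can be pulled out, leaving the inner sum $\sum \prod_{j=1}^r \tfrac{1}{\betabfm_j!}\norm{\pdif_\xbfm^{\betabfm_j} \vbfm}_{\infty, D; \Xcal}$. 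I would bound this by dropping the constraints, as each summand is nonnegative, and recognising the result as the multinomial expansion of $\norm{\vbfm}_{\eta, \infty, D; \Xcal}^r$. Recombining with the $r = 0$ term gives exactly the asserted estimate.

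I expect the only genuine subtlety to be this reindexing step and the justification that $\Wbfm \circ \vbfm$ actually possesses weak derivatives up to order $\eta$ given by the Fa\`a di Bruno formula, rather than merely that the right-hand side is finite; the stated differentiability of $\Wbfm$ and the $W^{\eta, \infty}$-regularity of $\vbfm$ are exactly what make the chain-rule computation valid almost everywhere and the resulting expressions $L^p$-integrable, so that membership in $W^{\eta, p}(D; \Ycal)$ follows together with the norm bound.
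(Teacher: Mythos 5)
Your proposal is correct and follows essentially the same route as the paper's proof: apply the Fa\`a di Bruno formula so that the $\alphabfm!$ prefactor cancels the $1/\alphabfm!$ weight, bound each term via the multilinear operator-norm and H\"older estimate (which the paper packages as Corollary~\ref{corollary:prodrule1}), then swap the order of summation and dominate the partition sum by the multinomial expansion of $\norm{\vbfm}_{\eta, \infty, D; \Xcal}^r$. The only cosmetic difference is that you carry out the product bound by hand rather than citing the corollary.
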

\begin{proof}
  The Fa\`a di Bruno formula leads to
  \begin{align*}
    &\norm{\Wbfm \circ \vbfm}_{\eta, p, D; \Ycal}
    = \sum_{\norms{\alphabfm} \leq \eta}
      \frac{1}{\alphabfm!}
      \norm[\big]{\pdif_\xbfm^\alphabfm (\Wbfm \circ \vbfm)}_{p, D; \Ycal} \\
    &\quad \leq \norm{\Wbfm \circ \vbfm}_{p, D; \Ycal} +
      \sum_{1 \leq \norms{\alphabfm} \leq \eta}
      \sum_{r=1}^{\norms{\alphabfm}} \frac{1}{r!}
      \sum_{P(\alphabfm, r)}
      \norm[\bigg]{
        (\Dif^r \Wbfm \circ \vbfm)
        \pdif_\xbfm^{\betabfm_1} \vbfm \cdots
        \pdif_\xbfm^{\betabfm_r} \vbfm
        \prod_{j=1}^r \frac{1}{\betabfm_j!}}_{p, D; \Ycal}
  \end{align*}
  Now, with Corollary~\ref{corollary:prodrule1},
  \begin{equation*}
    \norm[\bigg]{
      (\Dif^r \Wbfm \circ \vbfm)
      \pdif_\xbfm^{\betabfm_1} \vbfm \cdots
      \pdif_\xbfm^{\betabfm_r} \vbfm
      \prod_{j=1}^r \frac{1}{\betabfm_j!}}_{p, D; \Ycal}
    \leq \norm{\Dif^r \Wbfm \circ \vbfm}_{p, D; \Bcal^{r}(\Xcal; \Ycal)}
      \prod_{j=1}^{r}
      \frac{1}{\betabfm_j!} \norm[\big]{\pdif_\xbfm^{\betabfm_j} \vbfm}_{\infty, D; \Xcal}
  \end{equation*}
  leads to
  \begin{align*}
    &\norm{\Wbfm \circ \vbfm}_{\eta, p, D; \Ycal} \\
    &\quad \leq \norm{\Wbfm \circ \vbfm}_{p, D; \Ycal} +
    \sum_{1 \leq \norms{\alphabfm} \leq \eta}
    \sum_{r=1}^{\norms{\alphabfm}}
    \frac{1}{r!} \norm{\Dif^r \Wbfm \circ \vbfm}_{p, D; \Bcal^{r}(\Xcal; \Ycal)}
    \sum_{P(\alphabfm, r)}
    \prod_{j=1}^{r}
    \frac{1}{\betabfm_j!} \norm[\big]{\pdif_\xbfm^{\betabfm_j} \vbfm}_{\infty, D; \Xcal} \\
    &\quad \leq \norm{\Wbfm \circ \vbfm}_{p, D; \Ycal} +
    \sum_{r=1}^{\eta}
    \frac{1}{r!} \norm{\Dif^r \Wbfm \circ \vbfm}_{p, D; \Bcal^{r}(\Xcal; \Ycal)}
    \sum_{r \leq \norms{\alphabfm} \leq \eta}
    \sum_{P(\alphabfm, r)}
    \prod_{j=1}^{r}
    \frac{1}{\betabfm_j!} \norm[\big]{\pdif_\xbfm^{\betabfm_j} \vbfm}_{\infty, D; \Xcal} \\
    &\quad \leq \norm{\Wbfm \circ \vbfm}_{p, D; \Ycal} +
    \sum_{r=1}^{\eta}
    \frac{1}{r!} \norm{\Dif^r \Wbfm \circ \vbfm}_{p, D; \Bcal^{r}(\Xcal; \Ycal)}
    \groupp[\bigg]{\sum_{\betabfm \leq \alphabfm}
      \frac{1}{\betabfm!} \norm[\big]{\pdif_\xbfm^{\betabfm} \vbfm}_{\infty, D; \Xcal}}^r \\
    &\quad \leq \sum_{r=0}^{\eta}
    \frac{1}{r!} \norm{\Dif^r \Wbfm \circ \vbfm}_{p, D; \Bcal^{r}(\Xcal; \Ycal)}
    \norm{\vbfm}_{\eta, \infty, D; \Xcal}^r . \qedhere
  \end{align*}
\end{proof}

Furthermore,
we also introduce the shorthand notations
\begin{align*}
  \normt{v}_{\eta, p, D; \Xcal}
  &\isdef \norm{v}_{L_{\Pbbb_\ybfm}^\infty(\square; W^{\eta, p}(D; \Xcal))} ,
\end{align*}
for $v \in L_{\Pbbb_\ybfm}^\infty\groupp[\big]{\square; W^{\eta, p}(D; \Xcal)}$
where $\Xcal$ is a Banach space with norm $\norm{\cdot}_{\Xcal}$.
We may also omit the specification of the Banach space $\Xcal$ in this shorthand,
for example when $\Xcal$ is the space $\Rbbb$, $\Rbbb^d$ or $\Rbbb^{d \times d}$.

With this notation we still carry over the previous results,
yielding the following lemmata and corollaries.
Lemma~\ref{lemma:divjacgradrule} directly transforms to:
\begin{corollary}\label{corollary:divjacgradrules}
  Let $\eta \in \Nbbb^*$, $1 \leq p \leq \infty$.
  For $\vbfm \in L_{\Pbbb_\ybfm}^\infty\groupp[\big]{\square; W^{\eta, p}(D; \Rbbb^{d})}$
  we have that
  $\Div_\xbfm \vbfm \in L_{\Pbbb_\ybfm}^\infty\groupp[\big]{\square; W^{\eta-1, p}(D; \Rbbb)}$
  with
  \begin{equation*}
    \normt{\Div_\xbfm \vbfm}_{\eta-1, p, D}
    \leq \eta d \normt{\vbfm}_{\eta, p, D}
  \end{equation*}
  and
  $\Dif_\xbfm \vbfm \in L_{\Pbbb_\ybfm}^\infty\groupp[\big]{\square; W^{\eta-1, p}(D; \Rbbb^{d \times d})}$
  with
  \begin{equation*}
    \normt{\Dif_\xbfm \vbfm}_{\eta-1, p, D}
    \leq \eta d \normt{\vbfm}_{\eta, p, D} .
  \end{equation*}
  For $v \in L_{\Pbbb_\ybfm}^\infty\groupp[\big]{\square; W^{\eta, p}(D)}$
  we have that
  $\Grad_\xbfm v \in L_{\Pbbb_\ybfm}^\infty\groupp[\big]{\square; W^{\eta-1, p}(D; \Rbbb^{d})}$
  with
  \begin{equation*}
    \normt{\Grad_\xbfm v}_{\eta-1, p, D}
    \leq \eta d \normt{v}_{\eta, p, D} .
  \end{equation*}
\end{corollary}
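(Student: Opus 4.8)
The plan is to reduce the statement to Lemma~\ref{lemma:divjacgradrule}, which already supplies the spatial estimates with the factor $\eta d$, and then to lift these pointwise-in-$\ybfm$ bounds to the Bochner setting by taking the essential supremum over $\square$. The conceptual content is simply that $\Div_\xbfm$, $\Dif_\xbfm$ and $\Grad_\xbfm$ are fixed bounded linear maps from $W^{\eta, p}(D; \cdot)$ into $W^{\eta-1, p}(D; \cdot)$ with operator norm at most $\eta d$ (this is precisely Lemma~\ref{lemma:divjacgradrule}), and that applying such an operator pointwise in the stochastic parameter neither leaves the space $L_{\Pbbb_\ybfm}^\infty(\square; \cdot)$ nor increases the operator-norm bound.

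Concretely, I would fix a representative of $\vbfm$ and observe that, for almost every $\ybfm \in \square$, the section $\vbfm[\ybfm]$ lies in $W^{\eta, p}(D; \Rbbb^d)$, so that Lemma~\ref{lemma:divjacgradrule} applies and gives
\[
  \norm{\Div_\xbfm \vbfm[\ybfm]}_{\eta-1, p, D} \leq \eta d \norm{\vbfm[\ybfm]}_{\eta, p, D} ,
\]
and likewise for $\Dif_\xbfm \vbfm[\ybfm]$ and, in the scalar case, for $\Grad_\xbfm v[\ybfm]$. Taking the essential supremum over $\ybfm$ on both sides and recalling the definition of the $\normt{\cdot}_{\eta-1, p, D}$ norm then yields the three claimed inequalities, for instance
\[
  \normt{\Div_\xbfm \vbfm}_{\eta-1, p, D} = \esssup_{\ybfm \in \square} \norm{\Div_\xbfm \vbfm[\ybfm]}_{\eta-1, p, D} \leq \eta d \esssup_{\ybfm \in \square} \norm{\vbfm[\ybfm]}_{\eta, p, D} = \eta d \normt{\vbfm}_{\eta, p, D} .
\]

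The only point requiring care — and the step I would flag as the main obstacle, modest though it is — is measurability: one must verify that $\Div_\xbfm \vbfm$, $\Dif_\xbfm \vbfm$ and $\Grad_\xbfm v$ are again strongly measurable as functions $\square \to W^{\eta-1, p}(D; \cdot)$, so that they genuinely belong to the asserted spaces $L_{\Pbbb_\ybfm}^\infty(\square; \cdot)$ and the essential suprema above are well defined. This follows because each of the three operators is a continuous linear map between the relevant Sobolev spaces, and composing a strongly measurable $\square$-valued function with a continuous linear operator preserves strong measurability. With measurability in hand, the norm bounds above complete the proof, so no estimate beyond Lemma~\ref{lemma:divjacgradrule} is needed.
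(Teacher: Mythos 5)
Your argument is correct and is exactly the route the paper takes: the paper states this corollary without proof as a direct consequence of Lemma~\ref{lemma:divjacgradrule}, obtained by applying the spatial estimate for almost every $\ybfm$ and passing to the essential supremum over $\square$, precisely as you do. Your additional remark on preservation of strong measurability under the continuous linear operators $\Div_\xbfm$, $\Dif_\xbfm$ and $\Grad_\xbfm$ is a sound (if tacit in the paper) justification that the resulting functions lie in the asserted Bochner spaces.
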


We use Lemma~\ref{lemma:prodrule} to arrive at the following result:
\begin{lemma}\label{lemma:prodrules}
  Let $\eta, \in \Nbbb$, $1 \leq p, p' \leq \infty$, $\nubfm \in \Nbbb^M$,
  $\Xcal$ and $ \Ycal$ be Banach spaces
  and
  \begin{equation*}
    \pdif_\ybfm^\alphabfm \Mbfm \in L_{\Pbbb_\ybfm}^\infty\groupp[\big]{\square; W^{\eta, p}\groupp[\big]{D; \Bcal(\Xcal; \Ycal)}} , \quad
    \pdif_\ybfm^\alphabfm \vbfm \in L_{\Pbbb_\ybfm}^\infty\groupp[\big]{\square; W^{\eta, p'}(D; \Xcal)}
  \end{equation*}
  for all $\alphabfm \leq \nubfm$
  with $q = (p^{-1} + p'^{-1})^{-1} \geq 1$.
  Then, we have
  \begin{equation*}
    \pdif_\ybfm^\alphabfm (\Mbfm \vbfm) \in L_{\Pbbb_\ybfm}^\infty\groupp[\big]{\square; W^{\eta, p}(D; \Ycal)}
  \end{equation*}
  with
  \begin{equation*}
    \normt[\big]{\pdif_\ybfm^\alphabfm (\Mbfm \vbfm)}_{\eta, q, D; \Ycal}
    \leq \sum_{\betabfm \leq \alphabfm}
    \binom{\alphabfm}{\betabfm}
    \normt[\big]{\pdif_\ybfm^\betabfm \Mbfm}_{\eta, p, D; \Bcal(\Xcal; \Ycal)}
    \normt[\big]{\pdif_\ybfm^{\alphabfm-\betabfm} \vbfm}_{\eta, p', D; \Xcal} .
  \end{equation*}
  for all $\alphabfm \leq \nubfm$.
\end{lemma}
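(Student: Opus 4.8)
The plan is to combine the multivariate Leibniz rule in the stochastic variable $\ybfm$ with the spatial product rule of Lemma~\ref{lemma:prodrule}, and then to pass to the essential supremum over $\ybfm$. Since $\square = \groups{{-1}, 1}^{M}$ is finite\hyp{}dimensional under Assumption~\ref{assumption:Vdecay}, the operators $\pdif_\ybfm^\alphabfm$ are ordinary partial derivatives in finitely many variables, and the hypotheses guarantee that all derivatives $\pdif_\ybfm^\betabfm \Mbfm$ and $\pdif_\ybfm^{\alphabfm-\betabfm} \vbfm$ with $\betabfm \leq \alphabfm \leq \nubfm$ exist and lie in the stated Bochner spaces. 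As the product $\Mbfm \vbfm$ is the image under the continuous bilinear evaluation map $\Bcal(\Xcal; \Ycal) \times \Xcal \to \Ycal$, the Leibniz rule applies and gives, for almost every $\ybfm \in \square$,
\begin{equation*}
  \pdif_\ybfm^\alphabfm (\Mbfm \vbfm)[\ybfm]
  = \sum_{\betabfm \leq \alphabfm} \binom{\alphabfm}{\betabfm}
  \groupp[\big]{\pdif_\ybfm^\betabfm \Mbfm}[\ybfm]
  \groupp[\big]{\pdif_\ybfm^{\alphabfm-\betabfm} \vbfm}[\ybfm] .
\end{equation*}

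First I would fix such a $\ybfm$, take the spatial norm $\norm{\cdot}_{\eta, q, D; \Ycal}$ of this identity, and use the triangle inequality to pull it inside the finite sum over $\betabfm \leq \alphabfm$. Applying Lemma~\ref{lemma:prodrule} termwise --- which is legitimate since $q = (p^{-1} + p'^{-1})^{-1} \geq 1$ --- then factorises each summand into $\norm{\groupp[\big]{\pdif_\ybfm^\betabfm \Mbfm}[\ybfm]}_{\eta, p, D; \Bcal(\Xcal; \Ycal)}\, \norm{\groupp[\big]{\pdif_\ybfm^{\alphabfm-\betabfm} \vbfm}[\ybfm]}_{\eta, p', D; \Xcal}$. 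Taking the essential supremum over $\ybfm$ on both sides then concludes the argument: on the left it yields $\normt{\pdif_\ybfm^\alphabfm (\Mbfm \vbfm)}_{\eta, q, D; \Ycal}$, while on the right the subadditivity of $\esssup$ across the finite sum and the submultiplicativity $\esssup(fg) \leq \esssup(f)\,\esssup(g)$ on each product replace the pointwise spatial norms by the corresponding $\normt{\cdot}$-norms, which is exactly the claimed estimate. The finiteness of this bound, together with the strong measurability inherited from the factors, also gives the asserted membership in $L_{\Pbbb_\ybfm}^\infty\groupp[\big]{\square; W^{\eta, q}(D; \Ycal)}$.

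The argument is thus a direct assembly of results already in hand; the only points that need genuine care are the justification of the Leibniz rule for the operator\hyp{}vector product, which rests on the continuity and bilinearity of the evaluation map, and the measurability bookkeeping that lets the essential supremum interact cleanly with the finite sum and with the products. Everything else reduces to applying the spatial estimate of Lemma~\ref{lemma:prodrule} slicewise in $\ybfm$.
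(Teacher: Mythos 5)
Your proposal is correct and follows essentially the same route as the paper: apply the Leibniz rule in $\ybfm$, use the triangle inequality over the finite sum, invoke Lemma~\ref{lemma:prodrule} slicewise for fixed $\ybfm$, and bound the essential supremum of each product by the product of essential suprema. The only difference is the order in which the essential supremum and the triangle inequality are interchanged, which is immaterial.
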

\begin{proof}
  The Leibniz rule and the application of Lemma~\ref{lemma:prodrule} yield:
  \begin{align*}
    &\normt[\big]{\pdif_\ybfm^\alphabfm (\Mbfm \vbfm)}_{\eta, q, D; \Ycal}
    \leq \sum_{\betabfm \leq \alphabfm}
    \binom{\alphabfm}{\betabfm}
    \normt[\big]{\pdif_\ybfm^\betabfm \Mbfm \pdif_\ybfm^{\alphabfm-\betabfm} \vbfm}_{\eta, p, D; \Ycal} \\
    &\quad = \sum_{\betabfm \leq \alphabfm}
    \binom{\alphabfm}{\betabfm}
    \esssup_{\ybfm \in \square}
    \norm[\big]{\pdif_\ybfm^\betabfm \Mbfm[\ybfm] \pdif_\ybfm^{\alphabfm-\betabfm} \vbfm[\ybfm]}_{\eta, p, D; \Ycal} \\
    &\quad \leq \sum_{\betabfm \leq \alphabfm}
    \binom{\alphabfm}{\betabfm}
    \esssup_{\ybfm \in \square}
    \norm[\big]{\pdif_\ybfm^\betabfm \Mbfm[\ybfm]}_{\eta, p, D; \Bcal(\Xcal; \Ycal)}
    \norm[\big]{\pdif_\ybfm^{\alphabfm-\betabfm} \vbfm[\ybfm]}_{\eta, p', D; \Xcal} \\
    &\quad \leq \sum_{\betabfm \leq \alphabfm}
    \binom{\alphabfm}{\betabfm}
    \normt[\big]{\pdif_\ybfm^\betabfm \Mbfm}_{\eta, p, D; \Bcal(\Xcal; \Ycal)}
    \normt[\big]{\pdif_\ybfm^{\alphabfm-\betabfm} \vbfm}_{\eta, p', D; \Xcal} . \qedhere
  \end{align*}
\end{proof}

Again, similar as before by induction, we arrive at:
\begin{corollary}\label{corollary:prodrules11}
  Let $\eta, \in \Nbbb$, $1 \leq p, p_1, \ldots, p_r \leq \infty$, $\nubfm \in \Nbbb^M$,
  $\Xcal_1, \ldots, \Xcal_r$ and $ \Ycal$ be Banach spaces
  and
  \begin{equation*}
    \pdif_\ybfm^\alphabfm \Mbfm \in L_{\Pbbb_\ybfm}^\infty\groupp[\big]{\square; W^{\eta, p}\groupp[\big]{D; \Bcal(\Xcal_1, \ldots, \Xcal_r; \Ycal)}} , \quad
    \pdif_\ybfm^\alphabfm \vbfm_j \in L_{\Pbbb_\ybfm}^\infty\groupp[\big]{\square; W^{\eta, p_j}(D; \Xcal_j)}
  \end{equation*}
  for all $\alphabfm \leq \nubfm$
  with $q = (p^{-1} + p_1^{-1} + \cdots + p_r^{-1})^{-1} \geq 1$.
  Then, we have
  \begin{equation*}
    \pdif_\ybfm^\alphabfm (\Mbfm \vbfm_1 \cdots \vbfm_r) \in L_{\Pbbb_\ybfm}^\infty\groupp[\big]{\square; W^{\eta, p}(D; \Ycal)}
  \end{equation*}
  with
  \begin{align*}
    &\normt[\big]{\pdif_\ybfm^\alphabfm (\Mbfm \vbfm_1 \cdots \vbfm_r)}_{\eta, q, D; \Ycal} \\
    &\quad \leq \sum_{\betabfm + \betabfm_1 + \cdots + \betabfm_r = \alphabfm}
    \binom{\alphabfm}{\betabfm, \betabfm_1, \ldots \betabfm_r}
    \normt[\big]{\pdif_\ybfm^{\betabfm} \Mbfm}_{\eta, p, D; \Bcal(\Xcal_1, \ldots, \Xcal_r; \Ycal)}
    \prod_{j=1}^r \normt[\big]{\pdif_\ybfm^{\betabfm_j} \vbfm_j}_{\eta, p_j, D; \Xcal_j} .
  \end{align*}
  for all $\alphabfm \leq \nubfm$.
\end{corollary}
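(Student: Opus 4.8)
The plan is to argue by induction on the number of factors $r$, exactly as Corollary~\ref{corollary:prodrule1} was obtained from Lemma~\ref{lemma:prodrule}. The base case $r = 1$ is precisely Lemma~\ref{lemma:prodrules}, since the multinomial coefficient $\binom{\alphabfm}{\betabfm, \betabfm_1}$ reduces to $\binom{\alphabfm}{\betabfm}$ once $\betabfm_1 = \alphabfm - \betabfm$ is forced. For the inductive step I would exploit the recursive definition $\Bcal(\Xcal_1, \ldots, \Xcal_r; \Ycal) = \Bcal\groupp[\big]{\Xcal_1; \Bcal(\Xcal_2, \ldots, \Xcal_r; \Ycal)}$: the field $\Mbfm \vbfm_1$ then takes values in $\Bcal(\Xcal_2, \ldots, \Xcal_r; \Ycal)$ and satisfies $\Mbfm \vbfm_1 \cdots \vbfm_r = (\Mbfm \vbfm_1) \vbfm_2 \cdots \vbfm_r$.

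First I would apply Lemma~\ref{lemma:prodrules}, with $\Ycal$ replaced by $\Bcal(\Xcal_2, \ldots, \Xcal_r; \Ycal)$ and with the exponents $p, p_1$, to establish the required integrability of the $\ybfm$-derivatives of $\Mbfm \vbfm_1$ and to bound $\normt{\pdif_\ybfm^\gammabfm (\Mbfm \vbfm_1)}_{\eta, q_1, D}$ for every $\gammabfm \leq \nubfm$, where $q_1 = (p^{-1} + p_1^{-1})^{-1}$. Then I would apply the induction hypothesis to the $r-1$ factors $\vbfm_2, \ldots, \vbfm_r$, with $\Mbfm \vbfm_1$ playing the role of the operator-valued field and with the exponents $q_1, p_2, \ldots, p_r$, noting that their harmonic sum telescopes to $(q_1^{-1} + p_2^{-1} + \cdots + p_r^{-1})^{-1} = q$, so that the final spatial integrability is indeed $q$.

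Substituting the bound on $\normt{\pdif_\ybfm^\gammabfm (\Mbfm \vbfm_1)}_{\eta, q_1, D}$ into the estimate produced by the induction hypothesis yields a double sum: an outer sum over $\gammabfm + \betabfm_2 + \cdots + \betabfm_r = \alphabfm$ carrying $\binom{\alphabfm}{\gammabfm, \betabfm_2, \ldots, \betabfm_r}$ and an inner sum over $\betabfm \leq \gammabfm$ carrying $\binom{\gammabfm}{\betabfm}$. Relabelling $\betabfm_1 := \gammabfm - \betabfm$ and invoking the elementary multinomial identity
\begin{equation*}
  \binom{\alphabfm}{\gammabfm, \betabfm_2, \ldots, \betabfm_r} \binom{\gammabfm}{\betabfm}
  = \binom{\alphabfm}{\betabfm, \betabfm_1, \betabfm_2, \ldots, \betabfm_r}
\end{equation*}
collapses the two sums into the single sum over $\betabfm + \betabfm_1 + \cdots + \betabfm_r = \alphabfm$ appearing in the statement, with $\pdif_\ybfm^\betabfm \Mbfm$, $\pdif_\ybfm^{\betabfm_1} \vbfm_1$ and the $\pdif_\ybfm^{\betabfm_j} \vbfm_j$ matched up correctly, which completes the induction.

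The computations are routine; the only point requiring genuine care is the bookkeeping, namely verifying that the two layers of Leibniz/multinomial coefficients recombine into a single multinomial coefficient under the reindexing, and that the H\"older exponents compose as claimed. An equally valid and slightly more direct alternative would bypass the induction altogether: apply the multivariate Leibniz rule in $\ybfm$ to $\Mbfm \vbfm_1 \cdots \vbfm_r$ directly, and then bound each resulting term pointwise in $\ybfm$ by Corollary~\ref{corollary:prodrule1} before taking the essential supremum over $\square$.
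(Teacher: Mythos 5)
Your proposal is correct and follows exactly the route the paper intends: the corollary is stated with the remark ``Again, similar as before by induction,'' i.e.\ induction on $r$ with Lemma~\ref{lemma:prodrules} as the base case, which is precisely your argument, and your bookkeeping (the telescoping of the H\"older exponents and the recombination of $\binom{\alphabfm}{\gammabfm, \betabfm_2, \ldots, \betabfm_r}\binom{\gammabfm}{\betabfm}$ into a single multinomial coefficient) is sound. Your alternative route via the Leibniz rule in $\ybfm$ followed by a pointwise application of Corollary~\ref{corollary:prodrule1} is equally valid but not what the paper does here.
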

Moreover, we also will require the following corollary:
\begin{corollary}\label{corollary:prodrules22b}
  Let $\eta, \in \Nbbb$, $1 \leq p_1, p_2 \leq \infty$, $\nubfm \in \Nbbb^M$,
  $\Xcal_1, \Xcal_2$ and $ \Ycal$ be Banach spaces
  and
  \begin{equation*}
    \Mbfm \in \Bcal(\Xcal_1, \Xcal_2; \Ycal) , \quad
    \pdif_\ybfm^\alphabfm \vbfm_j \in L_{\Pbbb_\ybfm}^\infty\groupp[\big]{\square; W^{\eta, p_j}(D; \Xcal_j)}
  \end{equation*}
  for all $\alphabfm \leq \nubfm$
  with $q = (p_1^{-1} + p_2^{-1})^{-1} \geq 1$.
  Then, we have
  \begin{equation*}
    \pdif_\ybfm^\alphabfm (\Mbfm \vbfm_1 \vbfm_2) \in L_{\Pbbb_\ybfm}^\infty\groupp[\big]{\square; W^{\eta, p}(D; \Ycal)}
  \end{equation*}
  with
  \begin{align*}
    &\normt[\big]{\pdif_\ybfm^\alphabfm (\Mbfm \vbfm_1 \vbfm_2)}_{\eta, q, D; \Ycal} \\
    &\quad \leq \norm[\big]{\Mbfm}_{\Bcal(\Xcal_1, \Xcal_2; \Ycal)}
    \sum_{\betabfm \leq \alphabfm}
    \binom{\alphabfm}{\betabfm}
    \normt[\big]{\pdif_\ybfm^{\betabfm} \vbfm_1}_{\eta, p_1, D; \Xcal_1}
    \normt[\big]{\pdif_\ybfm^{\alphabfm-\betabfm} \vbfm_2}_{\eta, p_2, D; \Xcal_2}.
  \end{align*}
  for all $\alphabfm \leq \nubfm$.
\end{corollary}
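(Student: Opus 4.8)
The plan is to obtain this corollary as a direct specialisation of Corollary~\ref{corollary:prodrules11} with $r = 2$, exploiting that here $\Mbfm$ is a \emph{fixed} bounded bilinear map, independent of both the spatial variable $\xbfm$ and the stochastic parameter $\ybfm$. Choosing the exponent associated to $\Mbfm$ in Corollary~\ref{corollary:prodrules11} to be $\infty$ makes the resulting integrability exponent $(\infty^{-1} + p_1^{-1} + p_2^{-1})^{-1}$ coincide with $q = (p_1^{-1} + p_2^{-1})^{-1}$, so the exponent in the statement is the correct one and the hypothesis $q \geq 1$ is exactly what is needed to invoke the earlier corollary.

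First I would observe that, since $\Mbfm$ does not depend on $\ybfm$, its parametric derivative $\pdif_\ybfm^\betabfm \Mbfm$ vanishes for every $\betabfm \neq \zerobfm$, while $\pdif_\ybfm^\zerobfm \Mbfm = \Mbfm$. Consequently, in the sum over $\betabfm + \betabfm_1 + \betabfm_2 = \alphabfm$ furnished by Corollary~\ref{corollary:prodrules11} only the terms with $\betabfm = \zerobfm$ survive; writing $\betabfm_1 = \betabfm$ and $\betabfm_2 = \alphabfm - \betabfm$, the multinomial coefficient $\binom{\alphabfm}{\zerobfm, \betabfm_1, \betabfm_2}$ collapses to the binomial coefficient $\binom{\alphabfm}{\betabfm}$ and the outer sum becomes the single sum $\sum_{\betabfm \leq \alphabfm}$ appearing in the claim. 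Next I would identify the surviving factor $\normt{\Mbfm}_{\eta, \infty, D; \Bcal(\Xcal_1, \Xcal_2; \Ycal)}$ with the plain operator norm $\norm{\Mbfm}_{\Bcal(\Xcal_1, \Xcal_2; \Ycal)}$: this is immediate from the definitions, since $\Mbfm$ is constant in $\xbfm$, so every spatial derivative of positive order vanishes and the $W^{\eta, \infty}(D)$-norm reduces to its zeroth-order term with weight $1/\zerobfm! = 1$, and $\Mbfm$ is constant in $\ybfm$, so the outer essential supremum over $\square$ is just the value of that norm. Substituting this identification into the collapsed sum yields exactly the stated estimate, and the asserted membership of $\pdif_\ybfm^\alphabfm (\Mbfm \vbfm_1 \vbfm_2)$ is inherited from the corresponding conclusion of Corollary~\ref{corollary:prodrules11}.

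There is no genuine obstacle here; the only points requiring care are the two bookkeeping steps just described, namely the collapse of the multinomial coefficient to a binomial once the $\Mbfm$-index is forced to $\zerobfm$, and the reduction of the mixed Sobolev--Bochner norm of the constant map $\Mbfm$ to its operator norm. Should one prefer a self-contained argument, I would instead proceed directly: since $\Mbfm$ is $\ybfm$-independent and bilinear, the Leibniz rule gives $\pdif_\ybfm^\alphabfm (\Mbfm \vbfm_1 \vbfm_2) = \sum_{\betabfm \leq \alphabfm} \binom{\alphabfm}{\betabfm} \Mbfm (\pdif_\ybfm^\betabfm \vbfm_1)(\pdif_\ybfm^{\alphabfm-\betabfm} \vbfm_2)$, and then bound each summand in $W^{\eta, q}(D; \Ycal)$ pointwise in $\ybfm$ by Corollary~\ref{corollary:prodrule1} applied to the constant $\Bcal(\Xcal_1, \Xcal_2; \Ycal)$-valued field $\Mbfm$, before passing to the essential supremum over $\square$.
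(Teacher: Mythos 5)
Your proposal is correct and matches the paper's intent: the paper states this corollary without a written proof, treating it precisely as the specialisation of Corollary~\ref{corollary:prodrules11} to $r=2$ with the exponent for $\Mbfm$ taken as $\infty$, where constancy of $\Mbfm$ in $\ybfm$ kills all terms with a nonzero derivative index on $\Mbfm$, the multinomial coefficient collapses to a binomial, and the $W^{\eta,\infty}$-norm of the constant map reduces to its operator norm. Both your main argument and your self-contained Leibniz-rule alternative are sound.
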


Lastly, from Theorem~\ref{theorem:comprules} we can the derive the following:
\begin{theorem}\label{theorem:comprules}
  Let $\eta \in \Nbbb$, $1 \leq p \leq \infty$, $\nubfm \in \Nbbb^M$,
  $\Xcal$ and $\Ycal$ be Banach spaces,
  \begin{equation*}
    \vbfm \colon \square \to  W^{\eta, \infty}(D; \Xcal) ,
  \end{equation*}
  $X \subset \Xcal$ be open with $\Img_\square \Img_D \vbfm \subset X$,
  $\Wbfm \colon X \to \Ycal$ be $\eta + \norms{\nubfm}$-times Fr\'echet differentiable
  and, for $\alphabfm \leq \nubfm$ and $0 \leq t \leq \eta + \norms{\nubfm}$,
  \begin{equation*}
    \pdif_\ybfm^\alphabfm \vbfm \in L_{\Pbbb_\ybfm}^\infty\groupp[\big]{\square; W^{\eta, \infty}(D; \Xcal)} , \quad 
    \quad
    \Dif^t \Wbfm \circ \vbfm \in L_{\Pbbb_\ybfm}^\infty\groupp[\big]{\square; L^{p}\groupp[\big]{D; \Bcal^{t}(\Xcal; \Ycal)}} .
  \end{equation*}
  Then, we have
  \begin{equation*}
    \pdif_\ybfm^\alphabfm (\Wbfm \circ \vbfm) \in L_{\Pbbb_\ybfm}^\infty\groupp[\big]{\square; W^{\eta, p}(D; \Ycal)}
  \end{equation*}
  with
  \begin{equation*}
    \normt{\Wbfm \circ \vbfm}_{\eta, p, D; \Ycal}
    \leq \sum_{r=0}^{\eta}
    \frac{1}{r!} \normt[\big]{\Dif^{r} \Wbfm \circ \vbfm}_{p, D; \Bcal^{r}(\Xcal; \Ycal)}
    \normt{\vbfm}_{\eta, \infty, D; \Xcal}^r
  \end{equation*}
  and, for $\alphabfm \neq \zerobfm$,
  \begin{align*}
    \normt[\big]{\pdif_\ybfm^\alphabfm (\Wbfm \circ \vbfm)}_{\eta, p, D; \Ycal}
    &\leq \alphabfm!
    \sum_{s=1}^{\norms{\alphabfm}} \frac{1}{s!}
    \groupp[\bigg]{
      \sum_{r=0}^{\eta}
      \frac{1}{r!} \normt[\big]{\Dif^{r+s} \Wbfm \circ \vbfm}_{p, D; \Bcal^{r+s}(\Xcal; \Ycal)}
      \normt{\vbfm}_{\eta, \infty, D; \Xcal}^r} \\
    &\qquad \qquad \sum_{P(\alphabfm, s)}
    \prod_{j=1}^s \frac{1}{\betabfm_j!}
    \normt[\big]{\pdif_\ybfm^{\betabfm_j} \vbfm}_{\eta, \infty, D; \Xcal} .
  \end{align*}
\end{theorem}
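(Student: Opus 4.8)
The plan is to differentiate the composition $\Wbfm \circ \vbfm$ in the stochastic parameter $\ybfm$ by means of the Fa\`a di Bruno formula of Remark~\ref{remark:faadibruno}, to bound the spatial $\norm{\cdot}_{\eta, p, D; \Ycal}$-norm of each resulting summand with the tools already at hand, and only at the very end to pass to the essential supremum over $\square$; since every bound produced will be uniform in $\ybfm$, this last step turns the spatial norms into the $\normt{\cdot}$-norms appearing in the statement. The case $\alphabfm = \zerobfm$ is immediate: for almost every fixed $\ybfm$ the first asserted inequality is precisely Lemma~\ref{lemma:comprule} applied to $\vbfm[\ybfm] \in W^{\eta, \infty}(D; \Xcal)$, and taking the essential supremum over $\ybfm$ --- permissible because the hypotheses place each factor in the relevant Bochner space --- yields the claim.

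For $\alphabfm \neq \zerobfm$, I would first apply the Fa\`a di Bruno formula in $\ybfm$, which is a pointwise-in-$\ybfm$ identity, to obtain
\begin{equation*}
  \pdif_\ybfm^\alphabfm (\Wbfm \circ \vbfm)
  = \alphabfm! \sum_{s=1}^{\norms{\alphabfm}} \frac{1}{s!}
  \sum_{P(\alphabfm, s)}
  (\Dif^s \Wbfm \circ \vbfm)\,
  \pdif_\ybfm^{\betabfm_1} \vbfm \cdots \pdif_\ybfm^{\betabfm_s} \vbfm
  \prod_{j=1}^s \frac{1}{\betabfm_j!} .
\end{equation*}
Each summand is a $\Ycal$-valued field over $D$ obtained by inserting the $s$ factors $\pdif_\ybfm^{\betabfm_j} \vbfm$, which are $\Xcal$-valued, into the $\Bcal^s(\Xcal; \Ycal)$-valued field $\Dif^s \Wbfm \circ \vbfm$. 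Note that the $(\eta + \norms{\nubfm})$-fold Fr\'echet differentiability of $\Wbfm$ guarantees, via $s \leq \norms{\alphabfm} \leq \norms{\nubfm}$, that $\Dif^s \Wbfm$ is itself $\eta$-times differentiable, and that each $\betabfm_j \leq \alphabfm \leq \nubfm$, so that the hypotheses on $\vbfm$ apply to every factor.

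I would then estimate the spatial $\norm{\cdot}_{\eta, p, D; \Ycal}$-norm of a single summand in two steps. The key observation --- and the step I expect to demand the most care --- is that $\Dif^s \Wbfm \circ \vbfm$ is itself a spatial composition to which Lemma~\ref{lemma:comprule} applies: upon identifying the iterated Fr\'echet derivatives through the canonical isometry $\Bcal^r\groupp[\big]{\Xcal; \Bcal^s(\Xcal; \Ycal)} \cong \Bcal^{r+s}(\Xcal; \Ycal)$, one has $\Dif^r(\Dif^s \Wbfm) = \Dif^{r+s} \Wbfm$, so the lemma delivers both the membership $\Dif^s \Wbfm \circ \vbfm \in W^{\eta, p}\groupp[\big]{D; \Bcal^s(\Xcal; \Ycal)}$ and the bound
\begin{equation*}
  \norm[\big]{\Dif^s \Wbfm \circ \vbfm}_{\eta, p, D; \Bcal^s(\Xcal; \Ycal)}
  \leq \sum_{r=0}^{\eta} \frac{1}{r!}
  \norm[\big]{\Dif^{r+s} \Wbfm \circ \vbfm}_{p, D; \Bcal^{r+s}(\Xcal; \Ycal)}
  \norm{\vbfm}_{\eta, \infty, D; \Xcal}^r ,
\end{equation*}
which is exactly the parenthesised inner sum in the assertion; the hypothesis $\Dif^t \Wbfm \circ \vbfm \in L^p(D; \Bcal^t(\Xcal; \Ycal))$ for $0 \leq t \leq \eta + \norms{\nubfm}$ makes the right-hand side finite, since $s \leq r+s \leq \eta + \norms{\nubfm}$. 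Feeding the factors into the multilinear map and applying the product rule of Corollary~\ref{corollary:prodrule1} --- with the integrability exponent $p$ assigned to $\Dif^s \Wbfm \circ \vbfm$ and $\infty$ to each of the $s$ vector factors, so that the combined exponent remains $p$ --- then gives
\begin{equation*}
  \norm[\big]{(\Dif^s \Wbfm \circ \vbfm)\, \pdif_\ybfm^{\betabfm_1} \vbfm \cdots \pdif_\ybfm^{\betabfm_s} \vbfm}_{\eta, p, D; \Ycal}
  \leq \norm[\big]{\Dif^s \Wbfm \circ \vbfm}_{\eta, p, D; \Bcal^s(\Xcal; \Ycal)}
  \prod_{j=1}^s \norm[\big]{\pdif_\ybfm^{\betabfm_j} \vbfm}_{\eta, \infty, D; \Xcal} .
\end{equation*}

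Finally, I would substitute the first displayed bound into the second, sum the resulting estimate over the partitions $P(\alphabfm, s)$ and over $s$ while carrying along the combinatorial prefactors $\alphabfm!$, $1/s!$ and $1/\betabfm_j!$ supplied by Fa\`a di Bruno, and take the essential supremum over $\ybfm \in \square$. Because all terms are nonnegative and the sums are finite, the essential supremum distributes over the sums and products via $\esssup_\ybfm (A B) \leq \normt{A}\,\normt{B}$, turning every spatial norm into the corresponding $\normt{\cdot}$-norm; the finiteness of the resulting expression simultaneously establishes the asserted membership $\pdif_\ybfm^\alphabfm (\Wbfm \circ \vbfm) \in L_{\Pbbb_\ybfm}^\infty\groupp[\big]{\square; W^{\eta, p}(D; \Ycal)}$, and the bound obtained is exactly the one claimed.
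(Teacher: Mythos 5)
Your proposal is correct and follows essentially the same route as the paper's own proof: Fa\`a di Bruno in $\ybfm$, then Lemma~\ref{lemma:comprule} applied to $\Dif^s \Wbfm \circ \vbfm$ via the identification $\Bcal^r\groupp[\big]{\Xcal; \Bcal^s(\Xcal; \Ycal)} = \Bcal^{r+s}(\Xcal; \Ycal)$ to produce the inner sum, followed by the multilinear product rule with exponents $(p, \infty, \ldots, \infty)$. The only cosmetic difference is that you defer the essential supremum over $\ybfm$ to the end, whereas the paper works with the $\normt{\cdot}$-norms throughout via Corollary~\ref{corollary:prodrules11}; the two are equivalent.
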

\begin{proof}
  The application of Lemma~\ref{lemma:comprule} leads to
  \begin{align*}
    &\normt[\big]{\Dif^{s} \Wbfm \circ \vbfm}_{\eta, p, D; \Bcal^{s}(\Xcal; \Ycal)} \\
    &\quad = \esssup_{\ybfm \in \square}
    \norm[\big]{\Dif^{s} \Wbfm \circ \vbfm[\ybfm]}_{\eta, p, D; \Bcal^{s}(\Xcal; \Ycal)} \\
    &\quad \leq \esssup_{\ybfm \in \square}
    \sum_{r=0}^{\eta} \frac{1}{r!}
    \norm[\Big]{\Dif^r (\Dif^{s} \Wbfm) \circ \vbfm[\ybfm]}_{p, D; \Bcal^{r}(\Xcal; \Bcal^{s}(\Xcal; \Ycal))}
    \norm[\big]{\vbfm[\ybfm]}_{\eta, \infty, D; \Xcal}^r \\
    &\quad \leq  \sum_{r=0}^{\eta} \frac{1}{r!}
    \normt[\big]{\Dif^{r+s} \Wbfm \circ \vbfm}_{p, D; \Bcal^{r+s}(\Xcal; \Ycal)}
    \normt{\vbfm}_{\eta, \infty, D; \Xcal}^r ,
  \end{align*}
  from which, with $s=0$, the assertion follows directly for $\alphabfm = \zerobfm$.
  
  For $\alphabfm \neq \zerobfm$,
  we remark that the Fa\`a di Bruno formula and Corollary~\ref{corollary:prodrules11} yield
  \begin{align*}
    \normt[\big]{\pdif_\ybfm^\alphabfm (\Wbfm \circ \vbfm)}_{\eta, p, D; \Ycal}
    &\leq \alphabfm!
    \sum_{s=1}^{\norms{\alphabfm}} \frac{1}{s!}
    \sum_{P(\alphabfm, s)}
    \normt[\bigg]{(\Dif^{s} \Wbfm \circ \vbfm)
      \pdif_\ybfm^{\betabfm_1} \vbfm
      \cdots \pdif_\ybfm^{\betabfm_s} \vbfm
      \prod_{j=1}^s \frac{1}{\betabfm_j!}}_{\eta, p, D; \Ycal} \\
    &\leq \alphabfm!
    \sum_{s=1}^{\norms{\alphabfm}} \frac{1}{s!}
    \sum_{P(\alphabfm, s)}
    \normt[\big]{\Dif^{s} \Wbfm \circ \vbfm}_{\eta, p, D; \Bcal^{s}(\Xcal; \Ycal)}
    \prod_{j=1}^s \frac{1}{\betabfm_j!}
    \normt[\big]{\pdif_\ybfm^{\betabfm_j} \vbfm}_{\eta, \infty, D; \Xcal} \\
    &\leq \alphabfm!
    \sum_{s=1}^{\norms{\alphabfm}} \frac{1}{s!}
    \normt[\big]{\Dif^{s} \Wbfm \circ \vbfm}_{\eta, p, D; \Bcal^{s}(\Xcal; \Ycal)}
    \sum_{P(\alphabfm, s)}
    \prod_{j=1}^s \frac{1}{\betabfm_j!}
    \normt[\big]{\pdif_\ybfm^{\betabfm_j} \vbfm}_{\eta, \infty, D; \Xcal} . \qedhere
  \end{align*}
\end{proof}
\subsection{Parametric regularity}
We first consider the two terms
\begin{equation*}
  \Bbfm[\ybfm](\xbfm) \isdef \Vbfm[\ybfm](\xbfm) \Vbfm^\trans[\ybfm](\xbfm)
  \quad\text{and}\quad
  C[\ybfm](\xbfm) \isdef \Vbfm^\trans[\ybfm](\xbfm) \Vbfm[\ybfm](\xbfm)
\end{equation*}
which appear in the diffusion coefficient.
For these we have:
\begin{lemma}\label{lemma:BCybounds}
  We have for all $\alphabfm \in \Nbbb^M$ that
  \begin{equation*}
    \normt[\big]{\pdif_\ybfm^\alphabfm \Bbfm}_{\kappa, \infty, D}
    \leq k_\Bbfm \gammabfm_{\kappa}^\alphabfm
    \quad\text{and}\quad
    \normt[\big]{\pdif_\ybfm^\alphabfm C}_{\kappa, \infty, D}
    \leq k_C \gammabfm_{\kappa}^\alphabfm ,
  \end{equation*}
  with $k_\Bbfm \isdef k_C \isdef c_{\gammabfm_\kappa}^2$.
  Furthermore,
  we know that $\Img_\square \Img_D C \subset {[\underline{a}^2, \overline{a}^2]}$.
\end{lemma}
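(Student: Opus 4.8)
The plan is to recognise both $\Bbfm$ and $C$ as a fixed bilinear map applied to two copies of $\Vbfm$, and then to exploit that $\Vbfm$ depends \emph{affinely} on $\ybfm$, so that its stochastic derivatives are extremely sparse. Concretely, let $\Mbfm_\Bbfm \in \Bcal(\Rbbb^d, \Rbbb^d; \Rbbb^{d \times d})$ be the bilinear map given by $\Mbfm_\Bbfm \vbfm \wbfm \isdef \vbfm \wbfm^\trans$ and let $\Mbfm_C \in \Bcal(\Rbbb^d, \Rbbb^d; \Rbbb)$ be given by $\Mbfm_C \vbfm \wbfm \isdef \vbfm^\trans \wbfm$, so that $\Bbfm = \Mbfm_\Bbfm \Vbfm \Vbfm$ and $C = \Mbfm_C \Vbfm \Vbfm$. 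By the two identities recorded in the precursory remarks, namely $\norm{\vbfm \wbfm^\trans}_2 = \norm{\vbfm}_2 \norm{\wbfm}_2$ and the Cauchy--Schwartz inequality, both maps have operator norm equal to $1$.

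First I would apply Corollary~\ref{corollary:prodrules22b} with $p_1 = p_2 = \infty$ (hence $q = \infty$) to each of these products, taking $\nubfm$ componentwise at least as large as the given $\alphabfm$, which is admissible since $\nubfm$ is arbitrary. As $\norm{\Mbfm_\Bbfm}_{\Bcal} = \norm{\Mbfm_C}_{\Bcal} = 1$, this reduces both estimates to controlling
\begin{equation*}
  \sum_{\betabfm \leq \alphabfm} \binom{\alphabfm}{\betabfm}
  \normt[\big]{\pdif_\ybfm^{\betabfm} \Vbfm}_{\kappa, \infty, D}
  \normt[\big]{\pdif_\ybfm^{\alphabfm - \betabfm} \Vbfm}_{\kappa, \infty, D} .
\end{equation*}
The key simplification is that, since $\Vbfm[\ybfm] = \Mean[\Vbfm] + \sum_{k=1}^M \sigma_k \psibfm_k y_k$ is affine in $\ybfm$, the derivative $\pdif_\ybfm^{\betabfm} \Vbfm$ vanishes whenever $\norms{\betabfm} \geq 2$, equals $\sigma_k \psibfm_k$ when $\betabfm$ is the $k$-th unit multi-index, and equals $\Vbfm$ itself when $\betabfm = \zerobfm$. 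Using $\norms{y_k} \leq 1$ with the triangle inequality and the definition of $\gammabfm_\kappa$, the surviving factors are bounded by $\normt{\pdif_{y_k} \Vbfm}_{\kappa, \infty, D} = \gamma_{\kappa, k}$ and $\normt{\Vbfm}_{\kappa, \infty, D} \leq \norm{\gammabfm_\kappa}_{\ell^1(\Nbbb)}$, respectively.

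Consequently the sum above has only finitely many nonzero contributions and is identically zero as soon as $\norms{\alphabfm} \geq 3$, since a nonvanishing summand forces both $\betabfm$ and $\alphabfm - \betabfm$ to have order at most one. The remaining work is a short case distinction over $\norms{\alphabfm} \in \{0, 1, 2\}$, collecting in each case the surviving summands together with their binomial factors; I expect this bookkeeping to be the main --- though entirely elementary --- obstacle. The outcome of each case is a bound of the form $\norm{\gammabfm_\kappa}_{\ell^1(\Nbbb)}^2 \gammabfm_\kappa^\alphabfm$, $2 \norm{\gammabfm_\kappa}_{\ell^1(\Nbbb)} \gammabfm_\kappa^\alphabfm$ or $2 \gammabfm_\kappa^\alphabfm$, and the point of the definition $c_{\gammabfm_\kappa} = \max\{\norm{\gammabfm_\kappa}_{\ell^1(\Nbbb)}, 2\}$ is precisely that it renders all of these at most $c_{\gammabfm_\kappa}^2 \gammabfm_\kappa^\alphabfm$ simultaneously, via the elementary inequalities $\norm{\gammabfm_\kappa}_{\ell^1(\Nbbb)} \leq c_{\gammabfm_\kappa}$, $2 \norm{\gammabfm_\kappa}_{\ell^1(\Nbbb)} \leq c_{\gammabfm_\kappa}^2$ and $2 \leq c_{\gammabfm_\kappa}^2$. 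This gives $k_\Bbfm = k_C = c_{\gammabfm_\kappa}^2$.

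Finally, the claim on the range of $C$ is immediate: since $C[\ybfm](\xbfm) = \Vbfm^\trans[\ybfm](\xbfm) \Vbfm[\ybfm](\xbfm) = \norm[\big]{\Vbfm[\ybfm](\xbfm)}_2^2$, squaring the uniform ellipticity bound \eqref{eq:Vellipticity} yields $\underline{a}^2 \leq C \leq \overline{a}^2$ almost everywhere and almost surely, that is, $\Img_\square \Img_D C \subset [\underline{a}^2, \overline{a}^2]$.
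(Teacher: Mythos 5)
Your proposal is correct and follows essentially the same route as the paper's proof: both exploit that $\Vbfm$ is affine in $\ybfm$, so that the Leibniz expansion of $\pdif_\ybfm^\alphabfm(\Vbfm\Vbfm^\trans)$ has no surviving terms once $\norms{\alphabfm} > 2$, and both then settle the cases $\norms{\alphabfm} \in \{0,1,2\}$ with the bounds $c_{\gammabfm_\kappa}^2$, $2\,c_{\gammabfm_\kappa}\gamma_{\kappa,i}$ and $2\,\gamma_{\kappa,i}\gamma_{\kappa,j}$ before absorbing everything into $c_{\gammabfm_\kappa}^2\gammabfm_\kappa^\alphabfm$ via $c_{\gammabfm_\kappa}\geq 2$. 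The only cosmetic difference is that you invoke Corollary~\ref{corollary:prodrules22b} abstractly where the paper writes out the first and second $\ybfm$-derivatives of $\Bbfm$ explicitly and applies the product estimate directly.
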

\begin{proof}
  More verbosely, $\Bbfm$ is given by
  \begin{equation*}
    \Bbfm[\ybfm](\xbfm)
    = \groupp[\bigg]{\psibfm_0(\xbfm)  + \sum_{k=1}^{M} \sigma_k \psibfm_k(\xbfm) y_k}
    \groupp[\bigg]{\psibfm_0(\xbfm)  + \sum_{k=1}^{M} \sigma_k \psibfm_k(\xbfm) y_k}^\trans ,
  \end{equation*}
  from which we can derive the first order derivatives, yielding
  \begin{equation}
    \label{eq:dyiB}
    \begin{aligned}
      \pdif_{y_i} \Bbfm[\ybfm](\xbfm)
      &= \sigma_i \psibfm_i(\xbfm)
      \groupp[\bigg]{\psibfm_0(\xbfm)  + \sum_{k=1}^{M} \sigma_k \psibfm_k(\xbfm) y_k}^\trans \\
      &\quad {} + \groupp[\bigg]{\psibfm_0(\xbfm)  + \sum_{k=1}^{M} \sigma_k \psibfm_k(\xbfm) y_k}
      \sigma_i \psibfm_i^\trans(\xbfm) ,
    \end{aligned}
  \end{equation}
  and from those also the second order derivatives. They are given by
  \begin{equation}
    \label{eq:dyjdyiB}
    \pdif_{y_j} \pdif_{y_i} \Bbfm[\ybfm](\xbfm)
    = \sigma_i \psibfm_i(\xbfm) \sigma_j \psibfm_j^\trans(\xbfm)
    + \sigma_j \psibfm_j(\xbfm) \sigma_i \psibfm_i^\trans(\xbfm) .
  \end{equation}
  Since the second order derivatives with respect to $\ybfm$ are constant,
  all higher order derivatives with respect to $\ybfm$ vanish.
  
  We obviously have
  \begin{equation*}
    \normt{\Bbfm}_{\kappa, \infty, D} \leq c_{\gammabfm_\kappa}^2 .
  \end{equation*}
  From \eqref{eq:dyiB} we can now derive the bound
  \begin{equation*}
    \normt[\big]{\pdif_{y_i} \Bbfm}_{\kappa, \infty, D}
    \leq 2 \normt[\big]{\sigma_i \psibfm_i}_{\kappa, \infty, D} \normt[\bigg]{\psibfm_0  + \sum_{k=1}^{M} \sigma_k \psibfm_k y_k}_{\kappa, \infty, D}
    \leq 2 \gamma_{\kappa, i} c_{\gammabfm_\kappa}
  \end{equation*}
  and \eqref{eq:dyjdyiB} leads us to
  \begin{equation*}
    \normt[\big]{\pdif_{y_j} \pdif_{y_i} \Bbfm}_{\kappa, \infty, D}
    \leq 2 \normt[\big]{\sigma_i \psibfm_i}_{\kappa, \infty, D} \normt[\big]{\sigma_j \psibfm_j}_{\kappa, \infty, D}
    \leq 2 \gamma_{\kappa, i} \gamma_{\kappa, j} .
  \end{equation*}
  Therefore, we have
  \begin{equation*}
    \normt[\big]{\pdif_\ybfm^\alphabfm \Bbfm}_{\kappa, \infty, D}
    \leq \begin{cases}
      c_{\gammabfm_\kappa}^2 \gammabfm_\kappa^\alphabfm , & \text{if $\norms{\alphabfm} = 0$,} \\
      2 c_{\gammabfm_\kappa} \gammabfm_\kappa^\alphabfm , & \text{if $\norms{\alphabfm} = 1$,} \\
      2 \gammabfm_\kappa^\alphabfm , & \text{if $\norms{\alphabfm} = 2$,} \\
      0 , & \text{if $\norms{\alphabfm} > 2$,}
    \end{cases}
  \end{equation*}
  and are finished since $c_{\gammabfm_\kappa} \geq 2$.

  Starting from the equation
  \begin{equation*}
    C[\ybfm](\xbfm)
    = \groupp[\bigg]{\psibfm_0(\xbfm)  + \sum_{k=1}^{M} \sigma_k \psibfm_k(\xbfm) y_k}^\trans
    \groupp[\bigg]{\psibfm_0(\xbfm)  + \sum_{k=1}^{M} \sigma_k \psibfm_k(\xbfm) y_k} ,
  \end{equation*}
  the bound for $C$ clearly follows analogously as for $\Bbfm$.
  Lastly, we note that \eqref{eq:Vellipticity} directly implies
  that $\Img_\square \Img_D C \subset {[\underline{a}^2, \overline{a}^2]}$.
\end{proof}
Next, we consider the terms
\begin{equation*}
  D[\ybfm](\xbfm) \isdef \groupp[\big]{C[\ybfm](\xbfm)}^{-1}
  \quad\text{and}\quad
  E[\ybfm](\xbfm) \isdef \sqrt{C[\ybfm](\xbfm)} ,
\end{equation*}
for which we have:
\begin{lemma}\label{lemma:DEybounds}
  We know for all $\alphabfm \in \Nbbb^M$ that
  \begin{equation*}
    \normt[\big]{\pdif_\ybfm^\alphabfm D}_{\kappa, \infty, D}
    \leq \norms{\alphabfm}! k_D c_D^{\norms{\alphabfm}} \gammabfm_\kappa^{\alphabfm}
    \quad\text{and}\quad
    \normt[\big]{\pdif_\ybfm^\alphabfm E}_{\kappa, \infty, D}
    \leq \norms{\alphabfm}! k_E c_E^{\norms{\alphabfm}} \gammabfm_\kappa^{\alphabfm} ,
  \end{equation*}
  with
  \begin{equation*}
    k_D \isdef (\kappa+1) \overline{a} \groupp[\bigg]{\frac{2 k_C}{\underline{a}^2}}^\kappa ,
    \quad
    k_E \isdef (\kappa+1) \frac{1}{\underline{a}^2} \groupp[\bigg]{\frac{2 k_C}{\underline{a}^2}}^\kappa
    \quad\text{and}\quad
    c_D \isdef c_E \isdef \frac{2 k_C}{\underline{a}^2 \log 2} .
  \end{equation*}
\end{lemma}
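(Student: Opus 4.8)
The plan is to recognise both $D$ and $E$ as compositions of a fixed scalar function with the scalar field $C$, namely $D = g \circ C$ and $E = h \circ C$ with $g(t) = 1/t$ and $h(t) = \sqrt{t}$, and then to invoke Theorem~\ref{theorem:comprules} with $\vbfm = C$, $\Xcal = \Ycal = \Rbbb$, $\eta = \kappa$ and $p = \infty$ (and $\nubfm$ taken as large as the order $\alphabfm$ under consideration requires). This is legitimate precisely because Lemma~\ref{lemma:BCybounds} guarantees $\Img_\square \Img_D C \subset [\underline{a}^2, \overline{a}^2]$, so the argument of $g$ and $h$ stays in a compact subinterval of $(0, \infty)$ on which both functions are smooth; hence every composition term $\Dif^t g \circ C$ and $\Dif^t h \circ C$ is bounded and the Fr\'echet differentiability hypotheses of Theorem~\ref{theorem:comprules} are met.

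First I would record the two ingredients that Theorem~\ref{theorem:comprules} consumes. On the one hand, Lemma~\ref{lemma:BCybounds} supplies the control of the inner field, $\normt{C}_{\kappa, \infty, D} \leq k_C$ and $\normt{\pdif_\ybfm^\betabfm C}_{\kappa, \infty, D} \leq k_C \gammabfm_\kappa^\betabfm$. On the other hand, since $\Bcal^m(\Rbbb; \Rbbb) \cong \Rbbb$, the norm $\normt{\Dif^m g \circ C}_{\infty, D}$ is simply the essential supremum of $\norms{g^{(m)}(C)}$, and likewise for $h$. Here $g^{(m)}(t) = (-1)^m m!\, t^{-(m+1)}$, so $\norms{g^{(m)}(C)} \leq m!\, \underline{a}^{-2(m+1)}$, while $h^{(m)}(t) = \binom{1/2}{m} m!\, t^{1/2-m}$ with $\norms{\binom{1/2}{m} m!} = \norms{\prod_{j=0}^{m-1} (1/2 - j)} \leq m!$, giving $\norms{h^{(m)}(C)} \leq m!\, \underline{a}^{1-2m}$ for $m \geq 1$ and $\norms{h(C)} \leq \overline{a}$. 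The factorial growth of these per-order bounds is exactly what will reproduce the $\norms{\alphabfm}!$ and the geometric factors $c_D^{\norms{\alphabfm}}$, $c_E^{\norms{\alphabfm}}$ on the right-hand side.

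Plugging these into Theorem~\ref{theorem:comprules} I would treat $\alphabfm = \zerobfm$ and $\alphabfm \neq \zerobfm$ in parallel, since the claimed bound must hold for every $\alphabfm$. For $\alphabfm \neq \zerobfm$ the partition sum $\sum_{P(\alphabfm, s)} \prod_j \frac{1}{\betabfm_j!} \normt{\pdif_\ybfm^{\betabfm_j} C}_{\kappa, \infty, D}$ becomes, after inserting the $C$-bounds, $k_C^s \gammabfm_\kappa^\alphabfm \sum_{P(\alphabfm, s)} \prod_j \frac{1}{\betabfm_j!}$, and the identity $\alphabfm! \sum_{P(\alphabfm, s)} \prod_j \frac{1}{\betabfm_j!} = s! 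S_{\norms{\alphabfm}, s}$ from Remark~\ref{remark:faadibruno} lets the factorials $\alphabfm!$ and $s!$ cancel cleanly, leaving $\gammabfm_\kappa^\alphabfm \sum_{s} S_{\norms{\alphabfm}, s} A_s k_C^s$. The inner factor $A_s = \sum_{r=0}^{\kappa} \frac{1}{r!} (r+s)!\, \underline{a}^{-2(r+s+1)} k_C^r$ I would estimate via $\binom{r+s}{r} \leq 2^{r+s}$, which turns the $r$-sum into a geometric series with ratio $2 k_C / \underline{a}^2 \geq 1$ bounded by $(\kappa+1)$ times its largest term at $r = \kappa$; this produces the factor $(\kappa+1)(2 k_C / \underline{a}^2)^\kappa$ common to $k_D$ and $k_E$, and splits off an $s!$. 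What remains is the weighted ordered-Bell sum $\sum_s S_{\norms{\alphabfm}, s}\, s!\, x^s$ with $x = 2 k_C / \underline{a}^2$; using $x^s \leq x^{\norms{\alphabfm}}$ and the bound $\sum_s s! S_{\norms{\alphabfm}, s} \leq \norms{\alphabfm}! / (\log 2)^{\norms{\alphabfm}}$ from Remark~\ref{remark:faadibruno} collapses it to $\norms{\alphabfm}!\, (x / \log 2)^{\norms{\alphabfm}} = \norms{\alphabfm}!\, c_D^{\norms{\alphabfm}}$, which is the source of the $\log 2$ in $c_D$ and $c_E$. The remaining scalar prefactor ($\overline{a}$ for $D$, $\underline{a}^{-2}$ for $E$ as recorded in the statement) is what the lowest-order derivative contributions assemble into after the collapse; the case $\alphabfm = \zerobfm$ is just the $r$-sum alone and yields the value bounds $\normt{D}_{\kappa, \infty, D} \leq k_D$ and $\normt{E}_{\kappa, \infty, D} \leq k_E$.

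I expect the main obstacle to be the careful bookkeeping of the nested sums over $s$, over the partitions $P(\alphabfm, s)$, and over $r$, keeping the powers of $\underline{a}$, $\overline{a}$ and $k_C$ separated so they assemble into precisely the stated $k_D, c_D$ and $k_E, c_E$. The one genuinely analytic input beyond this combinatorics is the elementary estimate $\norms{\binom{1/2}{m} m!} \leq m!$ on the derivatives of $t \mapsto \sqrt{t}$, which secures the $m!$-type growth that the entire scheme rests upon.
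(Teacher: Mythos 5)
Your proposal is correct and follows essentially the same route as the paper's proof: write $D = v\circ C$ and $E = w\circ C$ with $v(x)=x^{-1}$, $w(x)=\sqrt{x}$, bound $\Dif^t v$ and $\Dif^t w$ on $[\underline{a}^2,\overline{a}^2]$ using the range and derivative bounds of $C$ from Lemma~\ref{lemma:BCybounds}, feed everything into Theorem~\ref{theorem:comprules}, and control the resulting sums with $\binom{r+s}{r}\leq 2^{r+s}$ together with the Stirling-number and ordered-Bell estimates of Remark~\ref{remark:faadibruno}. The only wrinkle is your closing assignment of the prefactors (``$\overline{a}$ for $D$, $\underline{a}^{-2}$ for $E$''): your own intermediate bounds --- and the paper's proof --- actually produce $\underline{a}^{-2}$ for $D=C^{-1}$ and $\overline{a}$ for $E=\sqrt{C}$, so the constants $k_D$ and $k_E$ in the lemma statement appear to be interchanged relative to what either derivation establishes, an inconsequential labelling slip inherited from the paper rather than a gap in your argument.
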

\begin{proof}
  Because $D = v \circ C$ with $v(x) = x^{-1}$ and $E = w \circ C$ with $w(x) = \sqrt{x}$ are
  composite functions,
  we can employ Theorem~\ref{theorem:comprules}
  to bound their derivatives.
  For this,
  we remark that the $t$-th derivative of $v$ and $w$ are given by
  \begin{equation*}
    \Dif_x^t v(x) h_1 \cdots h_t
    = (-1)^t t! x^{-1-r} \prod_{k=1}^t h_k
    = (-1)^t t! v(x) v(x)^t \prod_{k=1}^t h_k 
  \end{equation*}
  and
  \begin{equation*}
    \Dif_x^t w(x) h_1 \cdots h_t
    = c_t x^{\frac{1}{2}-t} \prod_{k=1}^t h_k
    = c_t w(x) v(x)^t \prod_{k=1}^t h_k ,
  \end{equation*}
  where $c_t \isdef \prod_{i=0}^{t-1} \groupp[\big]{\frac{1}{2} - i}$.
  For $x \in {[\underline{a}^2, \overline{a}^2]}$ we therefore have
  \begin{equation*}
    \norm[\big]{\Dif_x^t v(x)}_{\Bcal^t(\Rbbb; \Rbbb)}
    \leq t! \frac{1}{\underline{a}^2} \groupp[\bigg]{\frac{1}{\underline{a}^2}}^{t}
    \quad\text{and}\quad
    \norm[\big]{\Dif_x^t w(x)}_{\Bcal^t(\Rbbb; \Rbbb)}
    \leq t! \overline{a} \groupp[\bigg]{\frac{1}{\underline{a}^2}}^{t}
  \end{equation*}
  which implies
  \begin{equation*}
    \normt[\big]{\Dif_x^{t} v \circ C}_{\infty, D; \Bcal^{t}(\Rbbb; \Rbbb)}
    \leq t! \frac{1}{\underline{a}^2} \groupp[\bigg]{\frac{1}{\underline{a}^2}}^{t}
    \quad\text{and}\quad
    \normt[\big]{\Dif_x^{t} w \circ C}_{\infty, D; \Bcal^{t}(\Rbbb; \Rbbb)}
    \leq t! \overline{a} \groupp[\bigg]{\frac{1}{\underline{a}^2}}^{t}
  \end{equation*}
  using Lemma~\ref{lemma:BCybounds} --- 
  with which we then furthermore arrive at the estimates
  \begin{align*}
    \normt{v \circ C}_{\kappa, \infty, D}
    &\leq \sum_{r=0}^{\eta} \frac{1}{r!}
    \normt[\big]{\Dif^{r} v \circ C}_{\infty, D; \Bcal^{r}(\Rbbb; \Rbbb)}
    \normt{C}_{\eta, \infty, D}^r \\
    &\leq \sum_{r=0}^{\kappa} \frac{1}{r!}
    r! \frac{1}{\underline{a}^2} \groupp[\bigg]{\frac{1}{\underline{a}^2}}^{r} k_C^r \\
    &\leq (\kappa+1)
    \frac{1}{\underline{a}^2} \groupp[\bigg]{\frac{1}{\underline{a}^2}}^{\kappa} k_C^\kappa
  \end{align*}
  and, analogously,
  \begin{equation*}
    \normt{w \circ C}_{\kappa, \infty, D}
    \leq (\kappa+1)
    \overline{a} \groupp[\bigg]{\frac{1}{\underline{a}^2}}^{\kappa} k_C^\kappa ,
  \end{equation*}
  as well as, for $\alphabfm \neq \zerobfm$,
  \begin{align*}
    \normt[\big]{\pdif_\ybfm^\alphabfm (v \circ C)}_{\kappa, \infty, D}
    &\leq \alphabfm!
    \sum_{s=1}^{\norms{\alphabfm}} \frac{1}{s!}
    \groupp[\bigg]{
      \sum_{r=0}^{\kappa} \frac{1}{r!}
      \normt[\big]{\Dif^{r+s} v \circ C}_{\infty, D; \Bcal^{r+s}(\Rbbb; \Rbbb)}
      \normt{C}_{\kappa, \infty, D}^r} \\
    &\qquad \qquad \sum_{P(\alphabfm, s)}
    \prod_{j=1}^s \frac{1}{\betabfm_j!}
    \normt[\big]{\pdif_\ybfm^{\betabfm_j} C}_{\kappa, \infty, D} \\
    &\leq \alphabfm!
    \sum_{s=1}^{\norms{\alphabfm}} \frac{1}{s!}
    \groupp[\bigg]{
      \sum_{r=0}^{\kappa} \frac{1}{r!}
      (r+s)! \frac{1}{\underline{a}^2} \groupp[\bigg]{\frac{1}{\underline{a}^2}}^{r+s} k_C^r}
    \sum_{P(\alphabfm, s)}
    \prod_{j=1}^s \frac{1}{\betabfm_j!}
    k_C \gammabfm_{\kappa}^{\betabfm_j} \\
    &\leq \alphabfm!
    (\kappa+1) 2^\kappa
    \frac{1}{\underline{a}^2} \groupp[\bigg]{\frac{1}{\underline{a}^2}}^{\kappa}
    k_C^\kappa
    2^{\norms{\alphabfm}} \groupp[\bigg]{\frac{1}{\underline{a}^2}}^{\norms{\alphabfm}}
    \groupp[\bigg]{
      \sum_{s=1}^{\norms{\alphabfm}} \sum_{P(\alphabfm, s)}
      \prod_{j=1}^s \frac{1}{\betabfm_j!}}
    k_C^{\norms{\alphabfm}} \gammabfm_{\kappa}^{\alphabfm} \\
    &\leq \norms{\alphabfm}!
    (\kappa+1) 2^\kappa
    \frac{1}{\underline{a}^2} \groupp[\bigg]{\frac{1}{\underline{a}^2}}^{\kappa} 
    k_C^\kappa
    2^{\norms{\alphabfm}} \groupp[\bigg]{\frac{1}{\underline{a}^2}}^{\norms{\alphabfm}}
    k_C^{\norms{\alphabfm}} \groupp[\bigg]{\frac{1}{\log 2}}^{\norms{\alphabfm}}
    \gammabfm_{\kappa}^{\alphabfm} 
  \end{align*}
  and, again analogously,
  \begin{equation*}
    \normt[\big]{\pdif_\ybfm^\alphabfm (w \circ C)}_{\kappa, \infty, D}
    \leq \norms{\alphabfm}!
    (\kappa+1) 2^\kappa
    \overline{a} \groupp[\bigg]{\frac{1}{\underline{a}^2}}^{\kappa} 
    k_C^\kappa
    2^{\norms{\alphabfm}} \groupp[\bigg]{\frac{1}{\underline{a}^2}}^{\norms{\alphabfm}}
    k_C^{\norms{\alphabfm}} \groupp[\bigg]{\frac{1}{\log 2}}^{\norms{\alphabfm}}
    \gammabfm_{\kappa}^{\alphabfm} .
  \end{equation*}
  Combining these estimates finally yields the assertions.
\end{proof}
We can now consider the two terms
\begin{equation*}
  \Fbfm[\ybfm](\xbfm) \isdef D[\ybfm](\xbfm) \Bbfm[\ybfm](\xbfm)
  \quad\text{and}\quad
  \Gbfm[\ybfm](\xbfm) \isdef E[\ybfm](\xbfm) \Fbfm[\ybfm](\xbfm) .
\end{equation*}
For these we have:
\begin{lemma}\label{lemma:FGybounds}
  We have for all $\alphabfm \in \Nbbb^M$ that
  \begin{equation*}
    \normt[\big]{\pdif_\ybfm^\alphabfm \Fbfm}_{\kappa, \infty, D}
    \leq \norms{\alphabfm}! k_\Fbfm c_\Fbfm^{\norms{\alphabfm}} \gammabfm_{\kappa}^\alphabfm
    \quad\text{and}\quad
    \normt[\big]{\pdif_\ybfm^\alphabfm \Gbfm}_{\kappa, \infty, D}
    \leq \norms{\alphabfm}! k_\Gbfm c_\Gbfm^{\norms{\alphabfm}} \gammabfm_{\kappa}^\alphabfm ,
  \end{equation*}
  with
  \begin{equation*}
    k_\Fbfm \isdef 3 k_D k_\Bbfm ,
    \quad
    c_\Fbfm \isdef c_D ,
    \quad
    k_\Gbfm \isdef k_E k_\Fbfm
    \quad\text{and}\quad
    c_\Gbfm \isdef 2 \max\groupb{c_E, c_\Fbfm} .
  \end{equation*}
\end{lemma}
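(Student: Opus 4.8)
The plan is to treat both $\Fbfm = D \Bbfm$ and $\Gbfm = E \Fbfm$ as products of a scalar field with a matrix field and to apply Corollary~\ref{corollary:prodrules22b}, taking the constant bilinear map $\Mbfm$ to be scalar multiplication $\Rbbb \times \Rbbb^{d \times d} \to \Rbbb^{d \times d}$, which has operator norm $1$ so that the $\norm{\Mbfm}$ factor drops out, and choosing $p_1 = p_2 = \infty$ so that $q = \infty$. This reduces each estimate to a Leibniz sum over $\ybfm$-derivatives into which I feed the bounds already established: for $\Fbfm$ the bound on $\pdif_\ybfm^\betabfm D$ from Lemma~\ref{lemma:DEybounds} together with the bound on $\pdif_\ybfm^{\alphabfm - \betabfm} \Bbfm$ from Lemma~\ref{lemma:BCybounds}, and for $\Gbfm$ the bound on $\pdif_\ybfm^\betabfm E$ from Lemma~\ref{lemma:DEybounds} together with the bound on $\pdif_\ybfm^{\alphabfm - \betabfm} \Fbfm$ just obtained.

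For $\Fbfm$, inserting these bounds yields
\begin{equation*}
  \normt[\big]{\pdif_\ybfm^\alphabfm \Fbfm}_{\kappa, \infty, D}
  \leq k_D k_\Bbfm \gammabfm_\kappa^\alphabfm
  \sum_{\betabfm \leq \alphabfm} \binom{\alphabfm}{\betabfm} \norms{\betabfm}!\, c_D^{\norms{\betabfm}} ,
\end{equation*}
where I have used $\gammabfm_\kappa^\betabfm \gammabfm_\kappa^{\alphabfm - \betabfm} = \gammabfm_\kappa^\alphabfm$ and bounded $c_D^{\norms{\betabfm}} \leq c_D^{\norms{\alphabfm}}$; the latter is valid because $c_D \geq 1$, which follows from $c_{\gammabfm_\kappa} \geq 2$ (whence $k_C \geq 4$), $\underline{a} \leq 1$ and $\log 2 < 1$. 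The remaining combinatorial factor is handled by the Vandermonde-type identity $\sum_{\betabfm \leq \alphabfm,\, \norms{\betabfm} = m} \binom{\alphabfm}{\betabfm} = \binom{\norms{\alphabfm}}{m}$, which collapses the multi-index sum to $\sum_{m=0}^{\norms{\alphabfm}} m! \binom{\norms{\alphabfm}}{m} = \norms{\alphabfm}! \sum_{i=0}^{\norms{\alphabfm}} 1/i! \leq e\, \norms{\alphabfm}! < 3\, \norms{\alphabfm}!$. This produces exactly $k_\Fbfm = 3 k_D k_\Bbfm$ and $c_\Fbfm = c_D$.

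For $\Gbfm$ the same steps lead to the combinatorial sum $\sum_{\betabfm \leq \alphabfm} \binom{\alphabfm}{\betabfm} \norms{\betabfm}!\, \norms{\alphabfm - \betabfm}!\, c_E^{\norms{\betabfm}} c_\Fbfm^{\norms{\alphabfm - \betabfm}}$; bounding $c_E^{\norms{\betabfm}} c_\Fbfm^{\norms{\alphabfm - \betabfm}} \leq c^{\norms{\alphabfm}}$ with $c \isdef \max\groupb{c_E, c_\Fbfm}$ and applying the same identity now gives $\sum_{m=0}^{\norms{\alphabfm}} m!\, (\norms{\alphabfm} - m)! \binom{\norms{\alphabfm}}{m} = (\norms{\alphabfm} + 1)\, \norms{\alphabfm}!$, after which the elementary bound $\norms{\alphabfm} + 1 \leq 2^{\norms{\alphabfm}}$ absorbs the extra factor into $c_\Gbfm = 2 \max\groupb{c_E, c_\Fbfm}$ and leaves $k_\Gbfm = k_E k_\Fbfm$. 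I expect the only non-routine points to be the passage from the multi-index binomial sums to single-index ones via the Vandermonde identity (the mechanism that turns the raw Leibniz sum into clean factorial growth) and the small bookkeeping needed to confirm $c_D \geq 1$; everything else is a direct substitution of the preceding lemmas into the parametric product rule.
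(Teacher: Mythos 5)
Your proposal is correct and follows essentially the same route as the paper: both apply Corollary~\ref{corollary:prodrules22b} to the products $D\Bbfm$ and $E\Fbfm$, insert the bounds from Lemmas~\ref{lemma:BCybounds} and~\ref{lemma:DEybounds}, and collapse the resulting multi-index Leibniz sums via the identity $\sum_{\betabfm \leq \alphabfm,\,\norms{\betabfm}=j}\binom{\alphabfm}{\betabfm}=\binom{\norms{\alphabfm}}{j}$ to obtain the factors $3$ and $2^{\norms{\alphabfm}}$. Your explicit verification that $c_D \geq 1$ (needed to bound $c_D^{\norms{\betabfm}}$ by $c_D^{\norms{\alphabfm}}$) is a small detail the paper leaves implicit, but otherwise the two arguments coincide.
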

\begin{proof}
  We use Corollary~\ref{corollary:prodrules22b} with the bounds
  from Lemma~\ref{lemma:BCybounds} and Lemma~\ref{lemma:DEybounds}
  to arrive at
  \begin{align*}
    \normt[\big]{\pdif_\ybfm^\alphabfm (D \Bbfm)}_{\kappa, \infty, D}
    &\leq \sum_{\betabfm \leq \alphabfm} \binom{\alphabfm}{\betabfm} 
    \normt[\big]{\pdif_\ybfm^\betabfm D}_{\kappa, \infty, D}
    \normt[\big]{\pdif_\ybfm^{\alphabfm-\betabfm} \Bbfm}_{\kappa, \infty, D} \\
    &\leq \sum_{\betabfm \leq \alphabfm} \binom{\alphabfm}{\betabfm}
    \norms{\betabfm}! k_D c_D^{\norms{\betabfm}} \gammabfm_\kappa^{\betabfm}
    k_\Bbfm \gammabfm_\kappa^{\alphabfm-\betabfm} \\
    &\leq k_D k_\Bbfm c_D^{\norms{\alphabfm}}
    \gammabfm_\kappa^{\alphabfm}
    \sum_{\betabfm \leq \alphabfm} \binom{\alphabfm}{\betabfm} \norms{\betabfm}!
  \end{align*}
  and
  \begin{align*}
    \normt[\big]{\pdif_\ybfm^\alphabfm (E \Fbfm)}_{\kappa, \infty, D}
    &\leq \sum_{\betabfm \leq \alphabfm} \binom{\alphabfm}{\betabfm} 
    \normt[\big]{\pdif_\ybfm^\betabfm E}_{\kappa, \infty, D}
    \normt[\big]{\pdif_\ybfm^{\alphabfm-\betabfm} \Fbfm}_{\kappa, \infty, D} \\
    &\leq \sum_{\betabfm \leq \alphabfm} \binom{\alphabfm}{\betabfm}
    \norms{\betabfm}! k_E c_E^{\norms{\betabfm}} \gammabfm_\kappa^{\betabfm}
    \norms{\alphabfm-\betabfm}! k_\Fbfm c_\Fbfm^{\norms{\alphabfm-\betabfm}} \gammabfm_\kappa^{\alphabfm-\betabfm} \\
    &\leq k_E k_\Fbfm \max\groupb{c_E, c_\Fbfm}^{\norms{\alphabfm}}
    \gammabfm_\kappa^{\alphabfm}
    \sum_{\betabfm \leq \alphabfm} \binom{\alphabfm}{\betabfm}
    \norms{\betabfm}! \norms{\alphabfm-\betabfm}!
  \end{align*}
  
  Lastly, the combinatorial identity
  \begin{equation}
    \label{eq:combinatorialidentity}
    \sum_{\substack{\betabfm \leq \alphabfm\\\norms{\betabfm}=j}} \binom{\alphabfm}{\betabfm}
    = \binom{\norms{\alphabfm}}{j}
  \end{equation}
  yields the bounds
  \begin{equation*}
    \sum_{\betabfm \leq \alphabfm} \binom{\alphabfm}{\betabfm} \norms{\betabfm}!
    = \sum_{j=0}^{\norms{\alphabfm}} j!
    \sum_{\substack{\betabfm \leq \alphabfm\\\norms{\betabfm}=j}} \binom{\alphabfm}{\betabfm}
    = \sum_{j=0}^{\norms{\alphabfm}} j! \binom{\norms{\alphabfm}}{j}
    = \norms{\alphabfm}! \sum_{k=0}^{\norms{\alphabfm}} \frac{1}{k!}
    \leq 3 \norms{\alphabfm}! .
  \end{equation*}
  and
  \begin{equation*}
    \sum_{\betabfm \leq \alphabfm} \binom{\alphabfm}{\betabfm} \norms{\betabfm}! \norms{\alphabfm-\betabfm}!
    = \sum_{j=0}^{\norms{\alphabfm}} j! (\norms{\alphabfm}-j)!
    \sum_{\substack{\betabfm \leq \alphabfm\\\norms{\betabfm}=j}} \binom{\alphabfm}{\betabfm}
    = \sum_{j=0}^{\norms{\alphabfm}} j! (\norms{\alphabfm}-j)! \binom{\norms{\alphabfm}}{j}
    = \norms{\alphabfm}! \sum_{k=0}^{\norms{\alphabfm}} 1
    \leq 2^{\norms{\alphabfm}} \norms{\alphabfm}!
  \end{equation*}
  with which the assertion follows.
\end{proof}
We now can consider the complete diffusion coefficient,
yielding:
\begin{theorem}\label{theorem:Aybounds}
  The derivatives of the diffusion matrix $\Abfm$, defined in \eqref{eq:AV}, satisfy
  \begin{equation*}
    \normt[\big]{\pdif_\ybfm^\alphabfm \Abfm}_{\kappa, \infty, D}
    \leq \norms{\alphabfm}! k_\Abfm c_\Abfm^{\norms{\alphabfm}} \gammabfm_{\kappa}^\alphabfm
  \end{equation*}
  for all $\alphabfm \in \Nbbb^M$ with
  \begin{equation*}
    k_\Abfm \isdef k_\Gbfm + \overline{a} k_\Fbfm + \overline{a}
    \quad\text{and}\quad
    c_\Abfm \isdef \max\groupb{c_\Gbfm, c_\Fbfm} .
  \end{equation*}
\end{theorem}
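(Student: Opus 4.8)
The plan is to reduce the claim to the bounds already established for $\Fbfm$ and $\Gbfm$ in Lemma~\ref{lemma:FGybounds} by means of a purely algebraic rewriting of the diffusion coefficient. Recalling that $\Fbfm = D \Bbfm = \Vbfm \Vbfm^\trans / \groupp[\big]{\Vbfm^\trans \Vbfm}$ and $\Gbfm = E \Fbfm = \norm{\Vbfm}_2 \, \Vbfm \Vbfm^\trans / \groupp[\big]{\Vbfm^\trans \Vbfm}$, I would first observe that the definition \eqref{eq:AV} can be recast as
\begin{equation*}
  \Abfm = a \Ibfm + \groupp[\big]{\norm{\Vbfm}_2 - a} \Fbfm = a \Ibfm + \Gbfm - a \Fbfm .
\end{equation*}
Thus $\Abfm$ is simply an affine combination of the constant field $a \Ibfm$ and the two fields $\Gbfm$ and $\Fbfm$ whose parametric derivatives are already controlled.

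Next I would apply $\pdif_\ybfm^\alphabfm$ to this identity and estimate via the triangle inequality, distinguishing the two cases $\alphabfm = \zerobfm$ and $\alphabfm \neq \zerobfm$. For $\alphabfm \neq \zerobfm$ the constant term $a \Ibfm$ is annihilated by the derivative, so
\begin{equation*}
  \normt[\big]{\pdif_\ybfm^\alphabfm \Abfm}_{\kappa, \infty, D}
  \leq \normt[\big]{\pdif_\ybfm^\alphabfm \Gbfm}_{\kappa, \infty, D}
  + a \normt[\big]{\pdif_\ybfm^\alphabfm \Fbfm}_{\kappa, \infty, D} ,
\end{equation*}
and Lemma~\ref{lemma:FGybounds} bounds the two summands by $\norms{\alphabfm}! k_\Gbfm c_\Gbfm^{\norms{\alphabfm}} \gammabfm_\kappa^\alphabfm$ and $\norms{\alphabfm}! k_\Fbfm c_\Fbfm^{\norms{\alphabfm}} \gammabfm_\kappa^\alphabfm$, respectively. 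For $\alphabfm = \zerobfm$ the constant term survives and contributes the additional summand $\normt{a \Ibfm}_{\kappa, \infty, D} = a$, using that $\Ibfm$ is spatially constant with unit spectral norm, so only the zeroth-order spatial term is present.

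Finally I would collapse the estimate into the asserted form using $a \leq \overline{a}$ together with $c_\Gbfm, c_\Fbfm \leq c_\Abfm = \max\groupb{c_\Gbfm, c_\Fbfm}$, which gives $k_\Gbfm c_\Gbfm^{\norms{\alphabfm}} + \overline{a} k_\Fbfm c_\Fbfm^{\norms{\alphabfm}} \leq \groupp[\big]{k_\Gbfm + \overline{a} k_\Fbfm} c_\Abfm^{\norms{\alphabfm}}$. The extra summand $\overline{a}$ appearing in $k_\Abfm = k_\Gbfm + \overline{a} k_\Fbfm + \overline{a}$ is exactly what absorbs the contribution of $a \Ibfm$ in the case $\alphabfm = \zerobfm$, where $\norms{\alphabfm}! = c_\Abfm^{\norms{\alphabfm}} = \gammabfm_\kappa^\alphabfm = 1$; for $\alphabfm \neq \zerobfm$ it merely loosens a bound that is already valid. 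I do not anticipate any genuine obstacle here, as the estimate is entirely mechanical once the decomposition is in place; the only point demanding a little care is the bookkeeping of the constant term $a \Ibfm$, which forces the case distinction and accounts for the slightly asymmetric shape of the constant $k_\Abfm$.
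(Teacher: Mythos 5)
Your proposal is correct and follows essentially the same route as the paper: decompose $\Abfm = a \Ibfm + \Gbfm - a \Fbfm$, apply the triangle inequality, invoke Lemma~\ref{lemma:FGybounds}, and absorb everything into $k_\Abfm$ and $c_\Abfm = \max\{c_\Gbfm, c_\Fbfm\}$. Your explicit case distinction for $\alphabfm = \zerobfm$ versus $\alphabfm \neq \zerobfm$ is in fact slightly more careful than the paper's one\hyp{}line treatment of the constant term $a\Ibfm$, but it is not a different argument.
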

\begin{proof}
  We can state $\Abfm$ as
  \begin{equation*}
    \Abfm[\ybfm](\xbfm) = a \Ibfm + \Gbfm[\ybfm](\xbfm) - a \Fbfm[\ybfm](\xbfm) ,
  \end{equation*}
  which,
  by taking the norm of the derivative and using Lemma~\ref{lemma:FGybounds},
  yields
  \begin{align*}
    \normt[\big]{\pdif_\ybfm^\alphabfm \Abfm}_{\kappa, \infty, D}
    &\leq \overline{a} \normt[\big]{\pdif_\ybfm^\alphabfm \Ibfm}_{\kappa, \infty, D}
    + \normt[\big]{\pdif_\ybfm^\alphabfm \Gbfm}_{\kappa, \infty, D}
    + \overline{a} \normt[\big]{\pdif_\ybfm^\alphabfm \Fbfm}_{\kappa, \infty, D} \\
    &\leq\overline{a}
    + \norms{\alphabfm}! k_\Gbfm c_\Gbfm^{\norms{\alphabfm}} \gammabfm_{\kappa}^\alphabfm
    + \overline{a} \norms{\alphabfm}! k_\Fbfm c_\Fbfm^{\norms{\alphabfm}} \gammabfm_{\kappa}^\alphabfm \\
    &\leq \norms{\alphabfm}! (k_\Gbfm + \overline{a} k_\Fbfm + \overline{a})
    \max\groupb{c_\Gbfm, c_\Fbfm}^{\norms{\alphabfm}} \gammabfm_{\kappa}^\alphabfm . \qedhere
  \end{align*}
\end{proof}

We now define the modified sequence $\mubfm_\kappa = (\mu_{\kappa, k})_{k \in \Nbbb}$ as
\begin{equation*}
  \mu_{\kappa, k}
  \isdef c_\Abfm \gamma_{\kappa, k}
  = \frac{4 c_{\gammabfm_\kappa}^2}{\underline{a}^2 \log 2} \gamma_{\kappa, k}
\end{equation*}
and also
\begin{equation*}
  k_{\kappa, \Abfm}
  \isdef k_\Abfm
  = \groups[\bigg]{(\kappa+1) \frac{1}{\underline{a}^2} \groupp[\bigg]{\frac{2 c_{\gammabfm_\kappa}^2}{\underline{a}^2}}^\kappa + \overline{a}} 3 (\kappa+1) \overline{a} \groupp[\bigg]{\frac{2 c_{\gammabfm_\kappa}^2}{\underline{a}^2}}^\kappa c_{\gammabfm_\kappa}^2 + \overline{a} ;
\end{equation*}
thus, we have
\begin{equation*}
  \normt[\big]{\pdif_\ybfm^\alphabfm \Abfm}_{\kappa, \infty, D}
  \leq \norms{\alphabfm}! k_{\kappa, \Abfm} \mubfm_\kappa^{\alphabfm} .
\end{equation*}

Now, Assumption~\ref{assumption:er} directly implies the following result:
\begin{lemma}\label{lemma:eubound}
  The unique solution $u \in L_{\Pbbb_\ybfm}^\infty\groupp[\big]{\square; H_0^1(D)}$
  of \eqref{eq:swpsodp} fulfils
  $u \in L_{\Pbbb_\ybfm}^\infty\groupp[\big]{\square; H^{\kappa+1}(D)}$,
  with
  \begin{equation*}
    \normt{u}_{\kappa+1, 2, D}
    \leq C_{\kappa, er} \norm{f}_{\kappa-1, 2, D} ,
  \end{equation*}
  where
  \begin{equation*}
    C_{\kappa, er} \isdef \max_{0 \leq s \leq \norm{\Abfm}_{L_{\Pbbb_\ybfm}^\infty(\square; \Rcal_\kappa)}} C_{\kappa, er}(D, s) .
  \end{equation*}
\end{lemma}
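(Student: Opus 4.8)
The plan is to read off the statement as a pointwise-in-$\ybfm$ application of the elliptic regularity result in Assumption~\ref{assumption:er}, followed by passage to the essential supremum over $\square$. First I would fix almost every $\ybfm \in \square$ and check that the frozen diffusion matrix $\Abfm[\ybfm]$ satisfies every hypothesis of Assumption~\ref{assumption:er}. By the construction \eqref{eq:AV} together with \eqref{eq:Aellipticity}, the matrix $\Abfm[\ybfm]$ is symmetric and uniformly elliptic; Theorem~\ref{theorem:Aybounds}, evaluated at $\alphabfm = \zerobfm$, shows that $\normt{\Abfm}_{\kappa, \infty, D} \leq k_\Abfm$ and hence that $\Abfm[\ybfm] \in W^{\kappa, \infty}(D; \Rbbb^{d \times d})$ for almost every $\ybfm$; and the membership $\Abfm[\ybfm] \in \Rcal_\kappa$ is exactly the closing hypothesis of Assumption~\ref{assumption:er}, since there one assumes $\Abfm \in L_{\Pbbb_\ybfm}^\infty(\square; \Rcal_\kappa)$. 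For $\kappa \geq 1$ the source obeys $f \in H^{\kappa - 1}(D)$, so $\norm{f}_{\kappa-1, 2, D}$ is finite.

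Next I would identify the weak formulation \eqref{eq:swpsodp} at this fixed $\ybfm$ with the variational problem in Assumption~\ref{assumption:er}, taking $h = f$ and $\Abfm = \Abfm[\ybfm]$. Since $u[\ybfm] \in H_0^1(D)$ is the unique weak solution supplied by Lemma~\ref{lemma:ubound}, it coincides with the unique solution produced by Assumption~\ref{assumption:er}, which therefore upgrades it to $u[\ybfm] \in H^{\kappa+1}(D)$ and furnishes the pointwise estimate
\begin{equation*}
  \norm{u[\ybfm]}_{\kappa+1, 2, D} \leq C_{\kappa, er}\groupp[\big]{D, \norm{\Abfm[\ybfm]}_{\Rcal_\kappa}} \norm{f}_{\kappa-1, 2, D} .
\end{equation*}

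The last step is to make this bound uniform in $\ybfm$. For almost every $\ybfm$ we have $\norm{\Abfm[\ybfm]}_{\Rcal_\kappa} \leq \norm{\Abfm}_{L_{\Pbbb_\ybfm}^\infty(\square; \Rcal_\kappa)}$, and because $C_{\kappa, er}(D, \cdot)$ depends continuously on its second argument it attains a maximum over the compact interval $[0, \norm{\Abfm}_{L_{\Pbbb_\ybfm}^\infty(\square; \Rcal_\kappa)}]$; this maximum is precisely the constant $C_{\kappa, er}$ in the statement. Thus $C_{\kappa, er}(D, \norm{\Abfm[\ybfm]}_{\Rcal_\kappa}) \leq C_{\kappa, er}$ for almost every $\ybfm$, and taking the essential supremum of the pointwise bound over $\square$ gives the asserted estimate on $\normt{u}_{\kappa+1, 2, D}$. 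Given that the real work is already encapsulated in Assumption~\ref{assumption:er} and Theorem~\ref{theorem:Aybounds}, there is no serious obstacle here; the only point warranting a moment's care — rather than a genuine difficulty — is verifying that $\ybfm \mapsto u[\ybfm]$ stays strongly measurable as a map into the smaller space $H^{\kappa+1}(D)$, so that indeed $u \in L_{\Pbbb_\ybfm}^\infty(\square; H^{\kappa+1}(D))$. This follows because $u$ is already strongly measurable into $H_0^1(D)$, the frozen solutions coincide, and the uniform bound just derived controls the $H^{\kappa+1}(D)$-norm.
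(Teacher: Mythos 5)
Your proposal is correct and follows exactly the route the paper intends: the paper states that Lemma~\ref{lemma:eubound} is a direct consequence of Assumption~\ref{assumption:er} and gives no further proof, and your argument simply fills in the pointwise application of that assumption for almost every $\ybfm$, the uniformity of the constant via continuity of $C_{\kappa, er}(D,\cdot)$ on the compact interval, and the passage to the essential supremum. The measurability remark at the end is a detail the paper omits entirely, so including it is a harmless bonus rather than a deviation.
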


However,
by also leveraging the higher spatial regularity in the Karhunen\hyp{}Lo\`eve
expansion of the random vector\hyp{}valued field,
we can show that the solution $u$ admits analytic regularity 
with respect to the stochastic parameter $\ybfm$ also in the $H^{\kappa+1}(D)$-norm.
This mixed regularity is then the essential ingredient when applying multilevel methods.
\begin{theorem}\label{theorem:euybounds}
  The derivatives in $\ybfm$ of the solution $u$ of \eqref{eq:swpsodp} satisfy
  \begin{equation*}
    \normt[\big]{\pdif_\ybfm^\alphabfm u}_{\kappa+1, 2, D} 
    \leq \norms{\alphabfm}! \mubfm_{\kappa}^\alphabfm
    \groupp[\bigg]{\max\groupb[\Big]{2, 2 C_{\kappa, er} \kappa^2 d^2 k_{\kappa, \Abfm},
      C_{\kappa, er} \norm{f}_{\kappa-1, 2, D}}}^{\norms{\alphabfm}+1} .
  \end{equation*}
\end{theorem}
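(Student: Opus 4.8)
The plan is to induct on $\norms{\alphabfm}$, using that every stochastic derivative $\pdif_\ybfm^\alphabfm u$ solves the \emph{same} spatial elliptic problem as $u$, only with a right-hand side assembled from strictly lower-order derivatives, so that the elliptic regularity of Assumption~\ref{assumption:er} can be fed back into itself. First I would differentiate the strong form of \eqref{eq:swpsodp}: since $f$ is independent of $\ybfm$, applying $\pdif_\ybfm^\alphabfm$ with $\alphabfm \neq \zerobfm$ and the Leibniz rule to $-\Div_\xbfm(\Abfm \Grad_\xbfm u) = f$ and splitting off the $\betabfm = \zerobfm$ term gives, with
\begin{equation*}
  h \isdef \Div_\xbfm\biggl(\sum_{\zerobfm \neq \betabfm \leq \alphabfm} \binom{\alphabfm}{\betabfm}\,(\pdif_\ybfm^\betabfm \Abfm)\,\Grad_\xbfm \pdif_\ybfm^{\alphabfm-\betabfm} u\biggr) ,
\end{equation*}
the identity $-\Div_\xbfm(\Abfm \Grad_\xbfm \pdif_\ybfm^\alphabfm u) = h$. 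Thus, for almost every $\ybfm$, $\pdif_\ybfm^\alphabfm u$ is the unique weak solution of the problem with diffusion $\Abfm$ and source $h$; once it is checked that $h \in H^{\kappa-1}(D)$ (which the inductive hypothesis guarantees), the elliptic regularity of Assumption~\ref{assumption:er}, applied for a.e.\ $\ybfm$ and then taking the essential supremum exactly as in Lemma~\ref{lemma:eubound}, yields $\normt{\pdif_\ybfm^\alphabfm u}_{\kappa+1, 2, D} \leq C_{\kappa, er}\, \normt{h}_{\kappa-1, 2, D}$.

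Next I would bound $\normt{h}_{\kappa-1, 2, D}$ by stripping off the differential operators and the product one factor at a time. Corollary~\ref{corollary:divjacgradrules} handles the outer $\Div_\xbfm$ (from $W^{\kappa,2}$ to $W^{\kappa-1,2}$, a factor $\kappa d$) and the inner $\Grad_\xbfm$ (from $W^{\kappa+1,2}$ to $W^{\kappa,2}$, a factor $(\kappa+1)d$), the product rule of Lemma~\ref{lemma:prodrule} (applied for almost every $\ybfm$, with $p=\infty$ for the coefficient and $p'=2$ for the gradient, so $q=2$) separates the two factors, and Theorem~\ref{theorem:Aybounds} together with the inductive hypothesis supply $\normt{\pdif_\ybfm^\betabfm \Abfm}_{\kappa, \infty, D} \leq \norms{\betabfm}!\, k_{\kappa, \Abfm}\, \mubfm_\kappa^\betabfm$ and, since $\norms{\alphabfm-\betabfm} < \norms{\alphabfm}$ for $\betabfm \neq \zerobfm$, the estimate $\normt{\pdif_\ybfm^{\alphabfm-\betabfm} u}_{\kappa+1, 2, D} \leq \norms{\alphabfm-\betabfm}!\, \mubfm_\kappa^{\alphabfm-\betabfm} B^{\norms{\alphabfm-\betabfm}+1}$, where $B$ abbreviates the maximum appearing in the claim. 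Recombining $\mubfm_\kappa^\betabfm \mubfm_\kappa^{\alphabfm-\betabfm} = \mubfm_\kappa^\alphabfm$ and using $\kappa(\kappa+1) \leq 2\kappa^2$, this reads
\begin{equation*}
  \normt{\pdif_\ybfm^\alphabfm u}_{\kappa+1, 2, D}
  \leq 2 C_{\kappa, er}\, \kappa^2 d^2 k_{\kappa, \Abfm}\, \mubfm_\kappa^\alphabfm
  \sum_{\zerobfm \neq \betabfm \leq \alphabfm} \binom{\alphabfm}{\betabfm} \norms{\betabfm}!\, \norms{\alphabfm-\betabfm}!\, B^{\norms{\alphabfm-\betabfm}+1} .
\end{equation*}

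The decisive step — and the one I expect to be most delicate — is the combinatorial collapse of this sum. Grouping by $j = \norms{\betabfm}$ and invoking the identity \eqref{eq:combinatorialidentity} together with $\binom{\norms{\alphabfm}}{j}\, j!\,(\norms{\alphabfm}-j)! = \norms{\alphabfm}!$ converts the sum into $\norms{\alphabfm}!\, \sum_{\ell=1}^{\norms{\alphabfm}} B^\ell$, producing exactly the factorial prefactor demanded by the claim. It then remains to dominate the geometric series by a single extra power of $B$: because $B \geq 2$ and $B \geq 2 C_{\kappa, er}\kappa^2 d^2 k_{\kappa, \Abfm}$, the differential-operator/elliptic-regularity prefactor and the summation of $\sum_{\ell=1}^{\norms{\alphabfm}} B^\ell$ are jointly absorbed into $B^{\norms{\alphabfm}+1}$, which closes the induction. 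The base case $\alphabfm = \zerobfm$ is precisely Lemma~\ref{lemma:eubound} combined with $C_{\kappa, er}\norm{f}_{\kappa-1, 2, D} \leq B$.

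The only genuine effort beyond routine manipulation is thus the tuning of the three quantities inside the maximum defining $B$, engineered so that one power of $B$ simultaneously swallows the constant $C_{\kappa, er}\kappa^2 d^2 k_{\kappa, \Abfm}$ and the factor incurred by summing the geometric series; this is what forces the specific shape of $B$, and it must be verified with care since the geometric factor and the prefactor compete for the same power of $B$. Verifying that $\pdif_\ybfm^\alphabfm u$ genuinely exists and satisfies the differentiated equation is standard — $\Abfm[\ybfm]$ depends analytically on $\ybfm$ under the ellipticity bound \eqref{eq:Vellipticity} — and can be folded into the same induction.
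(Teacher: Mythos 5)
Your proposal follows the paper's argument essentially step for step: differentiate the equation in $\ybfm$, apply the Leibniz rule, move the lower\hyp{}order terms to the right\hyp{}hand side in divergence form, invoke the elliptic regularity of Assumption~\ref{assumption:er}, strip the $\Div_\xbfm$ and $\Grad_\xbfm$ with Corollary~\ref{corollary:divjacgradrules}, separate the product with Lemma~\ref{lemma:prodrule}, insert Theorem~\ref{theorem:Aybounds} and the inductive hypothesis, and collapse the sum with \eqref{eq:combinatorialidentity}. (The paper runs the reduction through the variational form and Green's identity rather than the strong form, and places $\betabfm$ on the $u$-factor rather than on $\Abfm$, but these are cosmetic differences.)

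The one step that does not close as you have arranged it is exactly the one you flag as delicate. Writing $B$ for the maximum and $n = \norms{\alphabfm}$, your recursion has the form
\begin{equation*}
  \normt[\big]{\pdif_\ybfm^\alphabfm u}_{\kappa+1,2,D}
  \le P\, \mubfm_\kappa^\alphabfm\, n! \sum_{\ell=1}^{n} B^{\ell}
  \qquad\text{with}\qquad
  P \isdef 2 C_{\kappa, er}\,\kappa^2 d^2\, k_{\kappa,\Abfm} ,
\end{equation*}
and since $P$ may equal $B$ (it is one of the entries of the maximum), the right\hyp{}hand side is only bounded by $\tfrac{B}{B-1}\, n!\,\mubfm_\kappa^\alphabfm\, B^{n+1}$, which exceeds the inductive target $n!\,\mubfm_\kappa^\alphabfm\, B^{n+1}$ for every $n \ge 2$. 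The paper closes the induction by keeping the recursion prefactor at \emph{half} the maximum: it bounds the elliptic\hyp{}regularity/differentiation constant by $c/2$ (with $c$ the maximum), so that the sum gives $\tfrac{c}{2}\, n!\,\mubfm_\kappa^\alphabfm\, c \sum_{j=0}^{n-1} c^{j} \le \tfrac{c}{2(c-1)}\, n!\,\mubfm_\kappa^\alphabfm\, c^{n+1}$, and $c \ge 2$ yields $\tfrac{c}{2(c-1)} \le 1$. In other words, the factor $2$ in the entry $2C_{\kappa,er}\kappa^2 d^2 k_{\kappa,\Abfm}$ of the maximum is reserved for summing the geometric series, not for absorbing $\kappa(\kappa+1) \le 2\kappa^2$ as in your accounting; you must keep that factor of $\tfrac12$ in hand (or enlarge the constant in the statement) for the induction to go through.
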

\begin{proof}
  By differentiation of the variational formulation \eqref{eq:swpsodp} with respect to $\ybfm$
  we arrive, for arbitrary $v \in H_0^1(D)$, at
  \begin{equation*}
    \groupp[\Big]{\pdif_\ybfm^\alphabfm \groupp[\big]{\Abfm \Grad_\xbfm u},
    \Grad_\xbfm v}_{L^2(D; \Rbbb^d)} = 0 .
  \end{equation*}
  Applying the Leibniz rule on the left\hyp{}hand side yields
  \begin{equation*}
    \groupp[\bigg]{\sum_{\betabfm \leq \alphabfm} \binom{\alphabfm}{\betabfm}
    \pdif_\ybfm^{\alphabfm-\betabfm} \Abfm
    \pdif_\ybfm^{\betabfm} \Grad_\xbfm u ,
    \Grad_\xbfm v}_{L^2(D; \Rbbb^d)} = 0 .
  \end{equation*}
  Then, by rearranging and using the linearity of the gradient, we find
  \begin{equation*}
    \groupp[\Big]{
      \Abfm
      \Grad_\xbfm \pdif_\ybfm^{\alphabfm} u ,
      \Grad_\xbfm v}_{L^2(D; \Rbbb^d)}
    = - \groupp[\bigg]{
      \sum_{\betabfm < \alphabfm} \binom{\alphabfm}{\betabfm}
      \pdif_\ybfm^{\alphabfm-\betabfm} \Abfm
      \Grad_\xbfm \pdif_\ybfm^{\betabfm} u ,
      \Grad_\xbfm v}_{L^2(D; \Rbbb^d)} .
  \end{equation*}
  Using Green's identity, we can then write
  \begin{equation*}
    \groupp[\Big]{
      \Abfm
      \Grad_\xbfm \pdif_\ybfm^{\alphabfm} u ,
      \Grad_\xbfm v}_{L^2(D; \Rbbb^d)}
    = \groupp[\bigg]{
      \sum_{\betabfm < \alphabfm} \binom{\alphabfm}{\betabfm}
      \Div_\xbfm \groupp[\Big]{
        \pdif_\ybfm^{\alphabfm-\betabfm} \Abfm
        \Grad_\xbfm \pdif_\ybfm^{\betabfm} u} ,
      v}_{L^2(D; \Rbbb)} .
  \end{equation*}
  Thus, we arrive at
  \begin{align*}
    \normt[\big]{\pdif_\ybfm^{\alphabfm} u}_{\kappa+1, 2, D}
    &\leq C_{\kappa, er}
    \sum_{\betabfm < \alphabfm} \binom{\alphabfm}{\betabfm}
    \normt[\Big]{\Div_\xbfm
      \groupp[\Big]{
        \pdif_\ybfm^{\alphabfm-\betabfm} \Abfm
        \Grad_\xbfm \pdif_\ybfm^{\betabfm} u}}_{\kappa-1, 2, D} \\
    &\leq C_{\kappa, er}
    \sum_{\betabfm < \alphabfm} \binom{\alphabfm}{\betabfm}
    \kappa d \normt[\Big]{
      \pdif_\ybfm^{\alphabfm-\betabfm} \Abfm
      \Grad_\xbfm \pdif_\ybfm^{\betabfm} u}_{\kappa, 2, D} \\
    &\leq C_{\kappa, er}
    \sum_{\betabfm < \alphabfm} \binom{\alphabfm}{\betabfm}
    \kappa d
    \normt[\big]{\pdif_\ybfm^{\alphabfm-\betabfm} \Abfm}_{\kappa, \infty, D}
    \normt[\big]{\Grad_\xbfm \pdif_\ybfm^{\betabfm} u}_{\kappa, 2, D} \\
    &\leq C_{\kappa, er} \kappa^2 d^2 k_{\kappa, \Abfm}
    \sum_{\betabfm < \alphabfm} \binom{\alphabfm}{\betabfm}
    \norms{\alphabfm-\betabfm}! \mubfm_{\kappa}^{\alphabfm-\betabfm}
    \normt[\big]{\pdif_\ybfm^{\betabfm} u}_{\kappa+1, 2, D},
  \end{align*}
  from which we derive
  \begin{equation*}
    \normt[\big]{\pdif_\ybfm^\alphabfm u}_{\kappa+1, 2, D}
    \leq \frac{c}{2}
    \sum_{\betabfm < \alphabfm} \binom{\alphabfm}{\betabfm}
    \norms{\alphabfm-\betabfm}! \mubfm_{\kappa}^{\alphabfm-\betabfm}
    \normt[\big]{\pdif_\ybfm^{\betabfm} u}_{\kappa+1, 2, D} ,
  \end{equation*}
  where
  \begin{equation*}
    c \isdef
    \max\groupb[\Big]{2, 2 C_{\kappa, er} \kappa^2 d^2 k_{\kappa, \Abfm},
      C_{\kappa, er} \norm{f}_{\kappa-1, 2, D}} .
  \end{equation*}

  We note that, by definition of $c$, we have $c \geq 2$
  and furthermore, because of Lemma~\ref{lemma:eubound}, we also have that
  $\normt[\big]{u}_{H^1(D)} \leq c$,
  which means that the assertion is true for $\norms{\alphabfm} = 0$.
  Thus, we can use an induction over $\norms{\alphabfm}$ to prove the hypothesis
  \begin{equation*}
    \normt[\big]{\pdif_\ybfm^\alphabfm u}_{\kappa+1, 2, D} 
    \leq \norms{\alphabfm}! \mubfm_{\kappa}^\alphabfm c^{\norms{\alphabfm}+1}
  \end{equation*}
  for $\norms{\alphabfm} > 0$.
  
  Let the assertions hold for all $\alphabfm$,
  which satisfy $\norms{\alphabfm} \leq n-1$ for some $n \geq 1$.
  Then, we know for all $\alphabfm$ with $\norms{\alphabfm} = n$ that
  \begin{align*}
    \normt[\big]{\pdif_\ybfm^\alphabfm u}_{\kappa+1, 2, D}
    &\leq \frac{c}{2}
    \sum_{\betabfm < \alphabfm} \binom{\alphabfm}{\betabfm}
    \norms{\alphabfm-\betabfm}! \mubfm_{\kappa}^{\alphabfm-\betabfm}
    \normt[\big]{\pdif_\ybfm^{\betabfm} u}_{\kappa+1, 2, D} \\
    &\leq \frac{c}{2} \mubfm_{\kappa}^\alphabfm
    \sum_{\betabfm < \alphabfm} \binom{\alphabfm}{\betabfm}
    \norms{\alphabfm-\betabfm}! \norms{\betabfm}!
    c^{\norms{\betabfm}+1} \\
    &= \frac{c}{2} \mubfm_{\kappa}^\alphabfm
    \sum_{j=0}^{n-1} \sum_{\substack{\betabfm < \alphabfm\\\norms{\betabfm}=j}} \binom{\alphabfm}{\betabfm}
    \norms{\alphabfm-\betabfm}! \norms{\betabfm}!
    c^{\norms{\betabfm}+1} .
  \end{align*}
  Making use of the combinatorial identity \eqref{eq:combinatorialidentity} yields
  \begin{align*}
    \normt[\big]{\pdif_\ybfm^\alphabfm u}_{\kappa+1, 2, D}
    &\leq \frac{c}{2} \mubfm_{\kappa}^\alphabfm
    \sum_{j=0}^{n-1} \binom{\norms{\alphabfm}}{j}
    (\norms{\alphabfm}-j)! j! c^{j+1} 
    = \frac{c}{2} \norms{\alphabfm}! \mubfm_{\kappa}^\alphabfm
    c \sum_{j=0}^{n-1} c^j \\
    &\leq \frac{c}{2(c-1)} \norms{\alphabfm}! \mubfm_{\kappa}^\alphabfm
    c^{\norms{\alphabfm}+1} .
  \end{align*}
  Now, since $c \geq 2$, we have $c \leq 2(c-1)$ and hence also
  \begin{equation*}
    \normt[\big]{\pdif_\ybfm^\alphabfm u}_{\kappa+1, 2, D}
    \leq \norms{\alphabfm}! \mubfm_{\kappa}^\alphabfm c^{\norms{\alphabfm}+1} .
  \end{equation*}
  This completes the proof.
\end{proof}
\subsection{Numerical quadrature in the parameter}
Coming from the solution of \eqref{eq:swpsodp},
that is $u \in L_{\Pbbb_\ybfm}^\infty\groupp[\big]{\square; H^{\kappa+1}(D)}$,
we now wish to know the moments of $u$.
In this section, we will therefore consider the approximation of the mean of $u$.

The mean of $u$ is given by the Bochner integral
\begin{equation*}
  \Mean[u](\xbfm) = \int_\square u[\ybfm](\xbfm) \dif\!\ybfm.
\end{equation*}
Therefore, we may proceed to approximate it
by considering a generic quadrature method $Q_N$; that is
\begin{equation}\label{eq:quad}
  \Mean[u](\xbfm)
  \approx Q_N[u](\xbfm)
  \isdef \sum_{i=1}^N \omega_i^{(N)} u\groupp[\big]{\xbfm, \xibfm_i^{(N)}} ,
\end{equation}
where
\begin{equation*}
  \groupb[\Big]{\groupp[\big]{\omega_i^{(N)}, \xibfm_i^{(N)}}}_{i=1}^{N} \subset \Rbbb \times [0, 1]^M
\end{equation*}
are the weight and evaluation point pairs.
We assume that the quadrature chosen fulfils
\begin{equation}\label{eq:quadr}
  \norm[\big]{\Mean[u] - Q_N[u]}_{H^1(D)} \leq C N^{-r}
\end{equation}
for some constants $C > 0$ and $r > 0$.

We will employ the quasi\hyp{}Monte Carlo quadrature
based on the Halton sequence, i.e.\ $\omega_i^{(N)} = 1/N$ and
$\xibfm_i^{(N)} = 2 \chibfm_i - \onebfm$, where $\chibfm_i$ denotes 
the $i$-th $M$-dimensional Halton point, cf.~\cite{H60}. Then, we know 
that, given that there exists an $\eps >0$ such that $\gamma_{\kappa, k} 
\leq c k^{-3-\eps}$ holds for some $c > 0$, for every $\delta > 0$ there 
exists a constant $C = C(\delta) > 0$
such that \eqref{eq:quadr} holds for $r = 1 - \delta$,
see e.g.\ \cite{HPS16} which is a consequence of \cite{W02}.
Clearly, other, possibly more sophisticated,
quadrature methods may also be considered,
for example, other quasi\hyp{}Monte Carlo quadratures,
such as those based on the Sobol sequence or other low\hyp{}discrepancy sequences
as well as their higher\hyp{}order adaptations,
and anisotropic sparse grid quadratures, see e.g.~\cite{DKLNS14,HHPS15,Niederreiter,S67}.

To approximate the mean of $u$ as in \eqref{eq:quad}, 
we require the values $u[\ybfm]$ for $\ybfm = \xibfm_i$. 
These values can be approximated by $u_{l}[\ybfm]$,
where $u_l$ is the Galerkin approximation of the spatially weak 
formulation on a finite dimensional subspace $V_l$ of $H_0^1(D)$;
that is, $u_l$ is the solution of
\begin{equation*}
  \left\{
  \begin{aligned}
    & \text{Find $u_l \in L_{\Pbbb_\ybfm}^\infty(\square; V_l)$ such that} \\
    & \quad \groupp[\big]{\Abfm[\ybfm] \Grad_\xbfm u_l[\ybfm], \Grad_\xbfm v}_{L^2(D; \Rbbb^d)} = \groupp[\big]{f, v}_{L^2(D; \Rbbb^d)} \\
    & \quad \quad \text{for almost every $\ybfm \in \square$ and all $v \in V_l$.}
  \end{aligned}
  \right.
\end{equation*}
We assume that a sequence of $V_l$ can be chosen for $l \in \Nbbb$
such that there is a constant $K$ with
\begin{equation}\label{eq:femr}
  \norm[\big]{u - u_l}_{L_{\Pbbb_\ybfm}^\infty(\square; H^1(D))} \leq K 2^{-\kappa l} .
\end{equation}
For example, we can consider $V_l$ to be the spaces of 
continuous finite elements of order $\kappa$ coming from
a sequence of quasi\hyp{}uniform meshes $\Tcal_l$ using 
isoparametric elements, where the mesh size behaves like $2^{-l}$.
Then, it is known from finite element theory that we have
\eqref{eq:femr} with $K \sim \norm{u}_{L_{\Pbbb_\ybfm}^\infty(\square; H^{\kappa+1}(D))}$,
see e.g.\ \cite{Braess,BrennerScott}.

The combination of the error estimates \eqref{eq:quadr} and
\eqref{eq:femr} then leads to
\begin{align*}
  \norm[\big]{\Mean[u] - Q_N[u_l]}_{H^1(D)}
  &\leq \norm[\big]{\Mean[u] - Q_N[u]}_{H^1(D)}
  + \norm[\big]{Q_N[u] - Q_N[u_l]}_{H^1(D)} \\
  &\leq \norm[\big]{\Mean[u] - Q_N[u]}_{H^1(D)}
  + \norm[\big]{Q_N[u - u_l]}_{H^1(D)} \\
  &\leq C N^{-r} + K 2^{-\kappa l} .
\end{align*}
Thus, choosing $N_l \isdef \groupc[\big]{2^{\kappa l/r}}$ finally yields
\begin{equation*}
  \norm[\big]{\Mean[u] - Q_{N_l}[u_l]}_{H^1(D)}
  \leq (C + K) 2^{-\kappa l} .
\end{equation*}
In contrast, the mixed regularity, shown before in Theorem
\ref{theorem:euybounds}, allows us to consider a multilevel
adaptation, which may be given as
\begin{equation*}
  \Mean[u]
  \approx Q_{l}^{\mathrm{ML}}[u_0, \ldots, u_l]
  \isdef \sum_{k=0}^{l} \Delta Q_{k}[u_{l-k}]
\end{equation*}
where
\begin{equation*}
  \Delta Q_{0} \isdef Q_{N_0}
  \quad\text{and}\quad
  \Delta Q_{k} \isdef Q_{N_k} - Q_{N_{k-1}}.
\end{equation*}
Indeed, this is the sparse grid combination technique as
introduced in \cite{GSZ92}, see also \cite{GHP15,HPS13}.
It thus follows that
\begin{equation*}
  \norm[\big]{\Mean[u] - Q_{l}^{\mathrm{ML}}[u_0, \ldots, u_l]}_{H^1(D)}
  \lesssim l 2^{-\kappa l}.
\end{equation*}

For complexity considerations, we shall consider a quadrature that is nested,
i.e.\ we may set $\xibfm_i = \xibfm_i^{(N)}$ as it does not depend on $N$.
Then, we note that $Q_{l}^{\mathrm{ML}}[u_0, \ldots, u_l]$ may explicitly 
be stated as
\begin{align*}
  Q_{l}^{\mathrm{ML}}[u_0, \ldots, u_l](\xbfm)
  &= \sum_{i=1}^{N_0} \omega_i^{(N_0)} u_l(\xbfm, \xibfm_i) \\
  &\quad {} + \sum_{k=1}^{l} \groupp[\Bigg]{
  \sum_{i=1}^{N_{k-1}} \groupp[\Big]{\omega_i^{(N_k)} - \omega_i^{(N_{k-1})}} u_{l-k}(\xbfm, \xibfm_i) \\
  &\quad \qquad\quad {} + \sum_{i=N_{k-1}+1}^{N_k} \omega_i^{(N_k)} u_{l-k}(\xbfm, \xibfm_i)} .
\end{align*}
Computing $Q_{N_l}[u_l]$ requires thus the values $u_{l, i}(\xbfm) 
\isdef u(\xbfm, \xibfm_i)$, which can be derived by solving
\begin{equation*}
  \left\{
  \begin{aligned}
    & \text{Find $u_{l, i} \in V_l$ such that} \\
    & \quad \groupp[\big]{\Abfm(\xibfm_i) \Grad_\xbfm u_{l, i}, \Grad_\xbfm v}_{L^2(D; \Rbbb^d)} = \groupp[\big]{f, v}_{L^2(D; \Rbbb^d)}\quad \text{for all $v \in V_l$.}
  \end{aligned}
  \right.
\end{equation*}
Generally, when considering a sequence of finite element 
spaces $V_l$ as described above, the number of degrees of 
freedom behaves like $\Ocal\groupp[\big]{2^{ld}}$ and computing 
one $u_{l, i}$ using state of the art methods will have a complexity 
that is $\Ocal\groupp[\big]{2^{ld}}$. As this has to be done $N_l$ 
times for the calculation of $Q_{N_l}[u_l]$, a complexity scaling 
is obtained that is $\Ocal\groupp[\big]{2^{l (\kappa/r+d)}}$.
Therefore, for the computation of the multilevel quadrature 
$Q_{l}^{\mathrm{ML}}[u_0, \ldots, u_l]$, we arrive at an over-all 
complexity of
\begin{equation*}
  \sum_{k=0}^{l} \sum_{i=1}^{N_{k}} \Ocal\groupp[\big]{2^{(l-k) d}}
  = \sum_{k=0}^{l} \Ocal\groupp[\big]{2^{k \kappa/r} 2^{(l-k) d}}
  = \begin{cases}
    \Ocal\groupp[\big]{2^{l \kappa/r}} & \text{for } d < \kappa/r , \\
    \Ocal\groupp[\big]{l 2^{l \max\{\kappa/r, d\}}} & \text{for } d =\kappa/r , \\
    \Ocal\groupp[\big]{2^{l d}} & \text{for } d > \kappa/r .
  \end{cases}
\end{equation*}
We mention that also non-nested quadrature formulae can be used
but lead to a somewhat larger constant in the complexity estimate,
see \cite{GHP15} for the details.

\begin{remark}
  If we redefine the $N_l$ as $N_l \isdef \groupc[\big]{l^{(1+\eps)/r} 2^{\kappa l/r}}$
  for an $\eps > 0$, then we have
  \begin{equation*}
    \norm[\big]{\Mean[u] - Q_{N_l}[u]}_{H^1(D)}
    \leq C \groupp[\big]{l^{(1+\eps)/r} 2^{\kappa l/r}}^{-r}
    = C l^{-(1+\eps)} 2^{-\kappa l}
  \end{equation*}
  and, as proposed in \cite{BSZ11}, we arrive at
  \begin{equation*}
    \norm[\big]{\Mean[u] - Q_{l}^{\mathrm{ML}}[u_0, \ldots, u_l]}_{H^1(D)}
    \lesssim 2^{-\kappa l} .
  \end{equation*}
  So, the logarithmic factor, which shows up in the convergence rate,
  can be removed by increasing the quadrature accuracy slightly faster.
  Note that this modification increases the hidden constant
  with a dependance on $\eps$.
\end{remark}

\begin{remark}
  In the particular situation of a standard quasi-Monte Carlo method,
  we can consider $\delta'$ such that $\delta > \delta' > 0$.
  Then, the quadrature error satisfies the estimate
  \begin{equation*}
    \norm[\big]{\Mean[u] - Q_{N_l}[u]}_{H^1(D)}
    \leq C_{\delta'} N_l^{\delta'-1}
    = C_{\delta'} 2^{-\kappa l} 2^{-\kappa l (\delta-\delta')/(1-\delta)}.
  \end{equation*}
  With a similar argument as in \cite{BSZ11},
  it follows that
  \begin{equation*}
    \norm[\big]{\Mean[u] - Q_{l}^{\mathrm{ML}}[u_0, \ldots, u_l]}_{H^1(D)}
    \lesssim 2^{-\kappa l} .
  \end{equation*}
  That is, the logarithmic factor, which shows up in the convergence rate,
  is removed at the cost of replacing the constant $C_{\delta}$ with $C_{\delta'}$
  and adds a constant with a dependance on $\delta'$
  yielding an increased hidden constant.
\end{remark}

While we have exclusively considered the case of the mean 
of the solution $u$ here, we do note that analogous statements 
may also be shown for example for the higher-order moments,
see \cite{HPS13} for instance.
%
%
\section{Numerical Results}
We will now consider two examples of the model problem 
\eqref{eq:sodp} with a diffusion coefficient of form 
\eqref{eq:AV} using the unit cube $D \isdef (0, 1)^3$ as 
the domain of computations. Therefore, in view of $H^2$-regularity
of the spatial problem under consideration, we are only considering 
the situation with $\kappa = 1$. In both examples, we set 
the global strength $a$ to $a \isdef 0.12$ and choose the 
right hand side $f \equiv 1$. For convenience, we define
\begin{equation*}
  s_j(\xbfm, \xbfm')\isdef 16\cdot x_j(1-x_j)\cdot x_j'(1-x_j').
\end{equation*}

\begin{example}
  In this first example, we choose 
  the description of $\Vbfm$ to be defined by
  \begin{equation*}
    \Mean[\Vbfm](\xbfm)
    \isdef \begin{bmatrix} 1 & 0 & 0 \end{bmatrix}^\trans
  \end{equation*}
  and
  \begin{equation*}
    \Cov[\Vbfm](\xbfm, \xbfm')
    \isdef \frac{1}{100}\exp\groupp[\Bigg]{-\frac{\norm[\big]{\xbfm-\xbfm'}_2^2}{50}} 
    \begin{bmatrix} 1& 0 & 0 \\ 0 & 9s_2(\xbfm, \xbfm')  & 0 \\ 0 & 0 & 9s_3(\xbfm, \xbfm')  \end{bmatrix} .
  \end{equation*}
  We note that for $j \in \{2, 3\}$ the covariance in the normal direction 
  on the parts of the boundary with $x_j \in \{0, 1\}$ is suppressed.
\end{example}

\begin{example}
  For this second example we choose 
  the description of $\Vbfm$ to be defined by
  \begin{equation*}
    \Mean[\Vbfm](\xbfm)
    \isdef \begin{bmatrix} \cos\big((x_3-0.5)\tfrac{\pi}{3}\big) \\ 
    	\sin\big((x_3-0.5)\tfrac{\pi}{3}\big) \\ 0 \end{bmatrix}
  \end{equation*}
  and
  \begin{equation*}
    \Cov[\Vbfm](\xbfm, \xbfm')
    \isdef \frac{9}{100}\exp\groupp[\Bigg]{-\frac{\norm[\big]{\xbfm-\xbfm'}_2^2}{50}} 
    \begin{bmatrix} s_1(\xbfm, \xbfm') & 0 & 0 \\ 0 & s_2(\xbfm, \xbfm') & 0 \\ 0 & 0 & s_3(\xbfm, \xbfm') \end{bmatrix} .
  \end{equation*}
  Here, the covariance in the normal direction 
  on all of the boundary is suppressed.
\end{example}

The numerical implementation is performed with aid of the 
problem\hyp{}solving environment DOLFIN \cite{FEniCSbook},
which is a part of the FEniCS Project \cite{FEniCSbook}.
The Karhunen-Lo\`eve expansion of the vector field $\Vbfm$ is computed
by the pivoted Cholesky decomposition, see \cite{HPS12,HPS15} for the details.
For the finite element discretisation,
we employ the sequence of nested triangulations $\Tcal_l$,
yielded by successive uniform refinement, i.e.\ cutting each tetrahedron into $8$ tetrahedra.
The base triangulation $\Tcal_0$ consists of $6 \cdot 2^3 = 48$ tetrahedra.
Then, we use interpolation
with continuous element\hyp{}wise linear functions
and the truncated pivoted Cholesky decomposition for the Karhunen\hyp{}Lo\`eve expansion 
approximation and continuous element\hyp{}wise linear functions in space.
The truncation criterion for the pivoted Cholesky decomposition is that the 
relative trace error is smaller than $10^{-4}\cdot 4^{-l}$.

Since the exact solutions of the examples are unknown,
the errors will have to be estimated. Therefore, in this section, 
we will estimate the errors for the levels $0$ to $5$ by substituting 
the exact solution with the approximate solution computed on the level 
$6$ triangulation $\Tcal_6$ using the quasi\hyp{}Monte Carlo quadrature 
based on Halton points with $10^4$ samples.

For every level, we also define 
the number of samples used by the quasi\hyp{}Monte Carlo method based on Halton points (QMC);
we choose
\begin{equation*}
  N_l \isdef \groupc[\Big]{2^{l/(1-\delta)} \cdot 10}
\end{equation*}
with $\delta \isdef 0.2$;
see Table~\ref{tab:nrealis3d} for the resulting values of $N_l$.
This then also implies the amount of samples used on the different
levels when using the multilevel quasi\hyp{}Monte Carlo method based on Halton points (MLQMC).
Based on these choices, we expect to see an asymptotic rate of 
convergence of $2^{-l}$ in the $H^1$\hyp{}norm for the mean 
and in the $W^{1,1}$\hyp{}norm for the variance.

\begin{table}[htb]
\centering
\begin{tabular}{lrrrrrr}
  \toprule
  $l$ & $0$ & $1$ & $2$ & $3$ & $4$ & $5$ \\
  \midrule
  $N_l$ & $10$ & $24$ & $57$ & $135$ & $320$ & $762$ \\
  \midrule
  $M_1$ & $17$ & $26$ & $30$ & $36$& $44$& $52$ \\
  $M_2$ & $14$ & $26$ & $30$ & $36$& $43$& $52$ \\
  \bottomrule
\end{tabular}
\caption{The number of samples for the first six levels and the respective parameter dimensions.}
\label{tab:nrealis3d}
\end{table}
Figures~\ref{fig:exp1err} and \ref{fig:exp2err} show the estimated errors of the solution's 
first moment on the left hand side and of the solution's second moment on the right hand 
side, each versus the discretisation level for the QMC and MLQMC quadrature
for the two different examples.
As expected, the QMC quadrature methods achieves the predicted rate of convergence
in both examples,
and this rate of convergence also carries over to its multilevel adaptation (MLQMC).

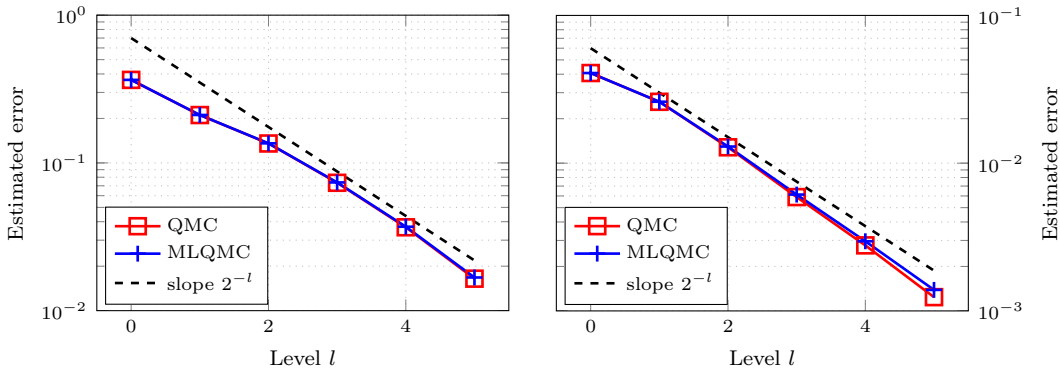
\begin{figure}[htb]
\centering
\begin{minipage}{0.49\textwidth}
\centering
\begin{tikzpicture}[baseline]
  \begin{semilogyaxis}[
      xlabel={Level $l$},
      ylabel={Estimated error},
      yticklabel pos=left,
      xmajorgrids=true,
      yminorgrids=true,
      ymax=1e0,
      ymin=1e-2,
    ]
    \addplot[red, mark=square, mark size=3pt, line width=1pt] table [x=L,y=QMC] {
L QMC MLQMC
0 0.364755 0.364755
1 0.210826 0.211084
2 0.135363 0.135666
3 0.0733695 0.0737691
4 0.0366668 0.0370021
5 0.0164656 0.0167718
    };
    \addlegendentry{QMC}
    \addplot[blue, mark=+, mark size=3pt, line width=1pt] table [x=L,y=MLQMC] {
L QMC MLQMC
0 0.364755 0.364755
1 0.210826 0.211084
2 0.135363 0.135666
3 0.0733695 0.0737691
4 0.0366668 0.0370021
5 0.0164656 0.0167718
    };
    \addlegendentry{MLQMC}
    \addplot[black, dashed, line width=1pt] table [x=L,y expr=0.7*2^(-\thisrow{L})] {
L QMC MLQMC
0 0.364755 0.364755
1 0.210826 0.211084
2 0.135363 0.135666
3 0.0733695 0.0737691
4 0.0366668 0.0370021
5 0.0164656 0.0167718
    };
    \addlegendentry{slope $2^{-l}$}
  \end{semilogyaxis}
\end{tikzpicture}
\end{minipage}
\begin{minipage}{0.49\textwidth}
\centering
\begin{tikzpicture}[baseline]
  \begin{semilogyaxis}[
      xlabel={Level $l$},
      ylabel={Estimated error},
      yticklabel pos=right,
      xmajorgrids=true,
      yminorgrids=true,
      ymax=1e-1,
      ymin=1e-3,
    ]
    \addplot[red, mark=square, mark size=3pt, line width=1pt] table [x=L,y=QMC] {
L QMC MLQMC
0 0.0407452  0.0407452 
1 0.0260228  0.0260457 
2 0.0128081  0.0129599 
3 0.00589495 0.00612986
4 0.00277783 0.00296096
5 0.00123954 0.00138782
    };
    \addlegendentry{QMC}
    \addplot[blue, mark=+, mark size=3pt, line width=1pt] table [x=L,y=MLQMC] {
L QMC MLQMC
0 0.0407452  0.0407452 
1 0.0260228  0.0260457 
2 0.0128081  0.0129599 
3 0.00589495 0.00612986
4 0.00277783 0.00296096
5 0.00123954 0.00138782
    };
    \addlegendentry{MLQMC}
    \addplot[black, dashed, line width=1pt] table [x=L,y expr=0.06*2^(-\thisrow{L})] {
L QMC MLQMC
0 0.0407452  0.0407452 
1 0.0260228  0.0260457 
2 0.0128081  0.0129599 
3 0.00589495 0.00612986
4 0.00277783 0.00296096
5 0.00123954 0.00138782
    };
    \addlegendentry{slope $2^{-l}$}
  \end{semilogyaxis}
\end{tikzpicture}
\end{minipage}
\caption{Example 1. $H^1$-error in the 1$^\text{st}$ moment (left) and $W^{1,1}$-error in the 2$^\text{nd}$ moment (right).}
\label{fig:exp1err}
\end{figure}
%
%
\begin{figure}[htb]
\centering
\begin{minipage}{0.49\textwidth}
\centering
\begin{tikzpicture}[baseline]
  \begin{semilogyaxis}[
      xlabel={Level $l$},
      ylabel={Estimated error},
      yticklabel pos=left,
      xmajorgrids=true,
      yminorgrids=true,
      ymax=1e0,
      ymin=1e-2,
    ]
    \addplot[red, mark=square, mark size=3pt, line width=1pt] table [x=L,y=QMC] {
L QMC MLQMC
0 0.361044  0.361044 
1 0.209126  0.209577 
2 0.131391  0.132163 
3 0.0701628 0.0706882
4 0.0349763 0.0355501
5 0.0157037 0.0162323
    };
    \addlegendentry{QMC}
    \addplot[blue, mark=+, mark size=3pt, line width=1pt] table [x=L,y=MLQMC] {
L QMC MLQMC
0 0.361044  0.361044 
1 0.209126  0.209577 
2 0.131391  0.132163 
3 0.0701628 0.0706882
4 0.0349763 0.0355501
5 0.0157037 0.0162323
    };
    \addlegendentry{MLQMC}
    \addplot[black, dashed, line width=1pt] table [x=L,y expr=0.7*2^(-\thisrow{L})] {
L QMC MLQMC
0 0.361044  0.361044 
1 0.209126  0.209577 
2 0.131391  0.132163 
3 0.0701628 0.0706882
4 0.0349763 0.0355501
5 0.0157037 0.0162323
    };
    \addlegendentry{slope $2^{-l}$}
  \end{semilogyaxis}
\end{tikzpicture}
\end{minipage}
\begin{minipage}{0.49\textwidth}
\centering
\begin{tikzpicture}[baseline]
  \begin{semilogyaxis}[
      xlabel={Level $l$},
      ylabel={Estimated error},
      yticklabel pos=right,
      xmajorgrids=true,
      yminorgrids=true,
      ymax=1e-1,
      ymin=1e-3,
    ]
    \addplot[red, mark=square, mark size=3pt, line width=1pt] table [x=L,y=QMC] {
L QMC MLQMC
0 0.0426113  0.0426113 
1 0.0271476  0.0273821 
2 0.0130775  0.0135457 
3 0.00588228 0.00614368
4 0.00282076 0.0031517 
5 0.00125573 0.00151914
    };
    \addlegendentry{QMC}
    \addplot[blue, mark=+, mark size=3pt, line width=1pt] table [x=L,y=MLQMC] {
L QMC MLQMC
0 0.0426113  0.0426113 
1 0.0271476  0.0273821 
2 0.0130775  0.0135457 
3 0.00588228 0.00614368
4 0.00282076 0.0031517 
5 0.00125573 0.00151914
    };
    \addlegendentry{MLQMC}
    \addplot[black, dashed, line width=1pt] table [x=L,y expr=0.06*2^(-\thisrow{L})] {
L QMC MLQMC
0 0.0426113  0.0426113 
1 0.0271476  0.0273821 
2 0.0130775  0.0135457 
3 0.00588228 0.00614368
4 0.00282076 0.0031517 
5 0.00125573 0.00151914
    };
    \addlegendentry{slope $2^{-l}$}
  \end{semilogyaxis}
\end{tikzpicture}
\end{minipage}
\caption{Example 2. $H^1$-error in the 1$^\text{st}$ moment (left) and $W^{1,1}$-error in the 2$^\text{nd}$ moment (right).}
\label{fig:exp2err}
\end{figure}
%
\section{Conclusion}
In this article,
we have considered the second order diffusion problem
\begin{equation*}
  \text{for almost every $\omega \in \Omega$: } \left\{
    \begin{alignedat}{2}
      - \Div_\xbfm \groupp[\big]{\Abfm[\omega] \Grad_\xbfm u[\omega]} &= f
      &\quad &\text{in }D , \\
      u[\omega] &= 0 &\quad &\text{on }\partial D ,
    \end{alignedat}
  \right.
\end{equation*}
with the uncertain diffusion coefficent given by
\begin{equation*}
  \Abfm[\omega] \isdef a \Ibfm + \groupp[\Big]{\norm[\big]{\Vbfm[\omega]}_2 - a} \frac{\Vbfm[\omega] \Vbfm^\trans[\omega]}{\Vbfm^\trans[\omega] \Vbfm[\omega]} .
\end{equation*}
This models anisotropic diffusion,
where the diffusion strength in the direction given by $\Vbfm / \norm{\Vbfm}_2$
is $\norm{\Vbfm}_2$ and perpendicular to it is $a$,
which can be used to model both diffusion in media that consist of thin fibres or thin sheets.

After having restated the problem in a parametric form
by considering the Karhunen\hyp{}Lo\`eve expansion of the random vector field $\Vbfm$,
we have shown that, given regularity of the elliptic diffusion problem,
the decay of the Karhunen\hyp{}Lo\`eve expansion of $\Vbfm$
entirely determines the regularity of the solution's dependence on the random 
parameter, also when considering this higher regularity in the spatial domain.

We then leverage this result to reduce the complexity of the approximation
of the solution's mean,
by using the multilevel quasi-Monte Carlo method
instead of the quasi-Monte Carlo method,
while still retaining the same error rate.
Indeed, while the QMC method yields a scheme,
where the uncertainty added increases the complexity,
this is not the case, when considering two or more spatial dimensions
and the MLQMC method.
That is, given elliptic regularity and up to a constant in the complexity,
adding uncertainty comes for free.
The numerical experiments corroborate these theoretical findings.

While we considered the use of QMC and its multilevel adaptation,
one can clearly also consider other quadrature methods, such as 
the anisotropic sparse grid quadrature, and then reduce the complexity 
by passing to their multilevel adaptations. Likewise, multilevel collocation
is also applicable.
%
%
\bibliographystyle{plain}
\bibliography{articles,books}

\begin{thebibliography}{10}

\bibitem{AbramowitzStegun}
M.~Abramowitz and I.~A. Stegun.
\newblock {\em Handbook of Mathematical Functions: With Formulas, Graphs, and
  Mathematical Tables}, volume~55 of {\em Appl. Math. Ser.}
\newblock Dover Publications, N. Chemsford, MA, 1964.

\bibitem{BSZ11}
A.~Barth, C.~Schwab, and N.~Zollinger.
\newblock Multi-level {M}onte {C}arlo finite element method for elliptic {PDE}s
  with stochastic coefficients.
\newblock {\em Numer. Math.}, 119(1):123--161, 2011.

\bibitem{BBPT12}
J.~D. Bayer, R.~C. Blake, G.~Plank, and N.~A. Trayanova.
\newblock A novel rule-based algorithm for assigning myocardial fiber
  orientation to computational heart models.
\newblock {\em Ann. Biomed. Eng.}, 40(10):2243--2254, 2012.

\bibitem{BTNT12}
J.~Beck, R.~Tempone, F.~Nobile, and L.~Tamellini.
\newblock On the optimal polynomial approximation of stochastic {PDE}s by
  {G}alerkin and collocation methods.
\newblock {\em Math. Models Methods Appl. Sci.}, 22(9):1250\,\!023, 2012.

\bibitem{Braess}
D.~Braess.
\newblock {\em {Finite Elemente. Theorie, schnelle L\"oser und Anwendungen in
  der Elasti\-zi\-t\"ats\-the\-o\-rie.}}
\newblock Springer, Berlin, 5., \"uberarb. aufl. edition, 2013.

\bibitem{BrennerScott}
S.~C. Brenner and L.~R. Scott.
\newblock {\em The Mathematical Theory of Finite Element Methods}.
\newblock Springer, New York, 3rd edition, 2008.

\bibitem{BLN17}
C.~B\v{a}cu\c{t}\v{a}, H.~Li, and V.~Nistor.
\newblock Differential operators on domains with conical points: precise
  uniform regularity estimates.
\newblock {\em Rev. Roumaine de Math. Pures Appl.}, 62(3):383--411, 2017.

\bibitem{CGST11}
K.~A. Cliffe, M.~B. Giles, R.~Scheichl, and A.~L. Teckentrup.
\newblock Multilevel {M}onte {C}arlo methods and applications to elliptic
  {PDE}s with random coefficients.
\newblock {\em Comput. Vis. Sci.}, 14(1):3--15, 2011.

\bibitem{CDS10}
A.~Cohen, R.~DeVore, and C.~Schwab.
\newblock Convergence rates of best {$N$}-term {G}alerkin approximations for a
  class of elliptic s{PDE}s.
\newblock {\em Found. Comput. Math.}, 10:615--646, 2010.

\bibitem{CS96}
G.~M. Constantine and T.~H. Savits.
\newblock A multivariate {F}a\`a di {B}runo formula with applications.
\newblock {\em Trans. Amer. Math. Soc.}, 248:503--520, 1996.

\bibitem{DEHPR18}
M.~D'Elia, H.~C. Edwards, J.~Hu, E.~Phipps, and S.~Rajamanickam.
\newblock Ensemble grouping strategies for embedded stochastic collocation
  methods applied to anisotropic diffusion problems.
\newblock {\em SIAM/ASA J. Uncertain. Quantif.}, 6(1):87--117, 2018.

\bibitem{DKLNS14}
J.~Dick, F.~Y. Kuo, Q.~T. Le~Gia, D.~Nuyens, and C.~Schwab.
\newblock Higher order {QMC} {P}etrov--{G}alerkin discretization for parametric
  operator equations.
\newblock {\em SIAM J. Numer. Anal.}, 52(6):2676--2702, 2014.

\bibitem{GKNSSS15}
I.~G. Graham, F.~Y. Kuo, J.~A. Nichols, R.~Scheichl, C.~Schwab, and I.~H.
  Sloan.
\newblock Quasi-{M}onte {C}arlo finite element methods for elliptic {PDE}s with
  lognormal random coefficients.
\newblock {\em Numer. Math.}, 131(2):329--368, 2015.

\bibitem{GHP15}
M.~Griebel, H.~Harbrecht, and M.~Peters.
\newblock Multilevel quadrature for elliptic parametric partial differential
  equations on non-nested meshes.
\newblock {\em ArXiv e-prints arXiv:1509.09058}, 2015.

\bibitem{GSZ92}
M.~Griebel, M.~Schneider, and C.~Zenger.
\newblock A combination technique for the solution of sparse grid problems.
\newblock In P.~de~Groen and R.~Beauwens, editors, {\em {Iterative Methods in
  Linear Algebra}}, pages 263--281. IMACS, Elsevier, North Holland, 1992.

\bibitem{Grisvard}
P.~Grisvard.
\newblock {\em Elliptic Problems in Nonsmooth Domains}.
\newblock Classics in Applied Mathematics. Society for Industrial and Applied
  Mathematics, 2011.

\bibitem{HHPS15}
A.-L. Haji-Ali, H.~Harbrecht, M.~Peters, and M.~Siebenmorgen.
\newblock Novel results for the anisotropic sparse quadrature and their impact
  on random diffusion problems.
\newblock {\em ArXiv e-prints arXiv:1509.05521}, 2015.

\bibitem{H60}
J.~H. Halton.
\newblock On the efficiency of certain quasi-random sequences of points in
  evaluating multi-dimensional integrals.
\newblock {\em Numer. Math.}, 2(1):84--90, 1960.

\bibitem{HPS12}
H.~Harbrecht, M.~Peters, and R.~Schneider.
\newblock On the low-rank approximation by the pivoted {C}holesky
  decomposition.
\newblock {\em Appl. Numer. Math.}, 62:428--440, 2012.

\bibitem{HPS13}
H.~Harbrecht, M.~Peters, and M.~Siebenmorgen.
\newblock On multilevel quadrature for elliptic stochastic partial differential
  equations.
\newblock {\em Sparse Grids and Applications}, 88:161--179, 2013.

\bibitem{HPS15}
H.~Harbrecht, M.~Peters, and M.~Siebenmorgen.
\newblock Efficient approximation of random fields for numerical applications.
\newblock {\em Numer. Linear Algebra Appl.}, 22(4):596--617, 2015.

\bibitem{HPS16}
H.~Harbrecht, M.~Peters, and M.~Siebenmorgen.
\newblock Analysis of the domain mapping method for elliptic diffusion problems
  on random domains.
\newblock {\em Numer. Math.}, 134(4):823--856, 2016.

\bibitem{HPS16b}
H.~Harbrecht, M.~Peters, and M.~Siebenmorgen.
\newblock Multilevel accelerated quadrature for {PDE}s with log-normal
  distributed random coefficient.
\newblock {\em SIAM/ASA J. Uncertain. Quantif.}, 4(1):520--551, 2016.

\bibitem{HPS17b}
H.~Harbrecht, M.~D. Peters, and M.~Schmidlin.
\newblock Uncertainty quantification for {PDE}s with anisotropic random
  diffusion.
\newblock {\em SIAM J. Numer. Anal.}, 55(2):1002--1023, 2017.

\bibitem{HillePhillips}
E.~Hille and R.~S. Phillips.
\newblock {\em Functional Analysis and Semi-Groups}, volume~31 of {\em Amer.
  Math. Soc. Collog. Publ.}
\newblock American Mathematical Society, Providence, 1957.

\bibitem{HS14}
V.~H. Hoang and C.~Schwab.
\newblock {$N$}-term {W}iener chaos approximation rate for elliptic {PDE}s with
  lognormal {G}aussian random inputs.
\newblock {\em Math. Models Methods Appl. Sci.}, 24(4):797--826, 2014.

\bibitem{KSS15}
F.~Y. Kuo, C.~Schwab, and I.~H. Sloan.
\newblock Multi-level quasi-{M}onte {C}arlo finite element methods for a class
  of elliptic {PDE}s with random coefficients.
\newblock {\em Found. Comput. Math.}, 15(2):411--449, 2015.

\bibitem{FEniCSbook}
A.~Logg, K.-A. Mardal, G.~N. Wells, et~al.
\newblock {\em Automated Solution of Differential Equations by the Finite
  Element Method}.
\newblock Springer, Berlin-Heidelberg, 2012.

\bibitem{Niederreiter}
H.~Niederreiter.
\newblock {\em Random Number Generation and Quasi-{M}onte {C}arlo Methods}.
\newblock Society for Industrial and Applied Mathematics, Philadelphia, PA,
  1992.

\bibitem{RSR18}
R.~Rodr{\'{\i}}guez-Cantano, J.~Sundnes, and M.~E. Rognes.
\newblock Uncertainty in cardiac myofiber orientation and stiffnesses dominate
  the variability of left ventricle deformation response.
\newblock {\em ArXiv e-prints arXiv:1801.02989}, 2018.

\bibitem{RSG07}
D.~Rohmer, A.~Sitek, and G.~T. Gullberg.
\newblock Reconstruction and visualization of fiber and laminar structure in
  the normal human heart from ex vivo diffusion tensor magnetic resonance
  imaging (dtmri) data.
\newblock {\em Investigat. Radiol.}, 42(11):777--789, 2007.

\bibitem{Setal12}
M.~Sermesant et~al.
\newblock Patient-specific electromechanical models of the heart for the
  prediction of pacing acute effects in {CRT}: {A} preliminary clinical
  validation.
\newblock {\em Med. Image Analy.}, 16(1):201--215, 2012.

\bibitem{S67}
I.~M. Sobol'.
\newblock Distribution of points in a cube and approximate evaluation of
  integrals.
\newblock {\em Zh. Vychisl. Mat. Mat. Fiz.}, 7:784--802, 1967.

\bibitem{TJWG}
A.~L. Teckentrup, P.~Jantsch, C.~G. Webster, and M.~Gunzburger.
\newblock A multilevel stochastic collocation method for partial differential
  equations with random input data.
\newblock {\em SIAM/ASA J. Uncertain. Quantif.}, 3(1):1046--1074, 2015.

\bibitem{W02}
X.~Wang.
\newblock A constructive approach to strong tractability using quasi-{M}onte
  {C}arlo algorithms.
\newblock {\em J. Complexity}, 18:683--701, 2002.

\end{thebibliography}
\end{document}